\documentclass[11pt,a4paper]{article}

\usepackage{dsfont}
\usepackage[utf8]{inputenc}
\usepackage[T1]{fontenc}
\usepackage[english]{babel}
\usepackage{amsmath}
\usepackage{amssymb}
\usepackage{amsthm}
\usepackage{sansmath}
\usepackage{calligra}
\usepackage{mathtools}
\usepackage[titletoc,toc,title]{appendix}

\usepackage{ae}
\usepackage{icomma}
\usepackage{units}
\usepackage{color}
\usepackage{graphicx}
\usepackage{caption}
\usepackage{subcaption}
\usepackage{bbm}
\usepackage[square, numbers, sort]{natbib}
\usepackage{multirow}
\usepackage{array}
\usepackage{geometry}
\usepackage{fancyhdr}
\usepackage{fncychap}
\usepackage[hyphens]{url}
\usepackage[pdfpagelabels=false]{hyperref}
\usepackage{lettrine}
\usepackage{theoremref}
\usepackage[]{todonotes}

\bibliographystyle{include/jmb}


\newcommand{\Line}{\mathcal{L}} 

\newcommand{\vol}{\mathrm{Vol}}
\newcommand{\reg}{\mathrm{reg}}

\newcommand{\Pm}{\mathbb{P}}


\newcommand{\E}{ \mathbb{E}}



\newcommand{\Q}{\mathbb{Q}}
\newcommand{\R}{\mathbb{R}}


\newcommand{\V}{\mathcal{V}}

\newcommand{\id}{\mathrm{d}}

\newcommand{\e}{\mathrm{e}}

\newcommand{\supp}{\mathrm{supp}\,}

\newcommand{\dist}[2]{\|#1-#2\|}
\newcommand{\diam}{\mathrm{diam}}

\newcommand{\ba}{a}
\newcommand{\bp}{p}
\newcommand{\auk}{a^{(k)}}
\newcommand{\adk}{a_{(k)}}
\newcommand{\puk}{p^{(k)}}
\newcommand{\pdk}{p_{(k)}}

\newcommand{\haara}{\nu_d}
\newcommand{\cent}{\mathrm{cent}}

\newcommand{\indicator}{{\rm I}}
\newcommand{\uhs}{\partial B(o,1)_+}

\newcommand{\BE}{{\mathbb{E}}}

\newcommand{\BP}{{\mathbb{P}}}

\newcommand{\BR}{{\mathbb{R}}}

\newcommand{\BZ}{{\mathbb{Z}}}

\newcommand{\FE}{{\mathfrak{E}}}

\newcommand{\CA}{{\mathcal{A}}}

\newcommand{\CC}{{\mathcal{C}}}

\newcommand{\CH}{{\mathcal{H}}}
\newcommand{\CI}{{\mathcal{I}}}

\newcommand{\CL}{{\mathcal{L}}}
\newcommand{\CM}{{\mathcal{M}}}

\newcommand{\CP}{{\mathcal{P}}}

\newcommand{\CR}{{\mathcal{R}}}
\newcommand{\CS}{{\mathcal{S}}}

\newcommand{\CV}{{\mathcal{V}}}

\newcommand{\Fc}{{\mathfrak{c}}}


\newtheorem{theorem}{Theorem}[section]

\newtheorem{proposition}[theorem]{Proposition}
\newtheorem{lemma}[theorem]{Lemma}

\newtheorem{corollary}[theorem]{Corollary}

\theoremstyle{remark}
\newtheorem{remark}[theorem]{Remark}

\begin{document}

\pagenumbering{arabic}
\title{\bf{The fractal cylinder process: existence and connectivity phase transition} \rm}
\author{Erik Broman\footnote{Department of Mathematics, Chalmers University of Technology and Gothenburg University, Sweden. E-mail: broman@chalmers.se. Research supported by the Swedish research council.} \and Olof Elias\footnote{Department of Mathematics, Chalmers University of Technology and Gothenburg University, Sweden. E-mail: olofel@chalmers.se.} \and Filipe Mussini\footnote{Department of Mathematics, Uppsala University, Sweden. E-mail: filipe.mussini@math.uu.se} \and Johan Tykesson\footnote{Department of Mathematics, Chalmers University of Technology and Gothenburg University, Sweden. E-mail: johant@chalmers.se. Research supported by the Swedish research council.}}
\date{\today}

\maketitle
\thispagestyle{empty}

\begin{abstract}
We consider a semi-scale invariant version of the Poisson cylinder model 
which in a natural way induces a random fractal set. We show that this
random fractal exhibits an existence phase transition for any dimension 
$d\geq 2,$ and a connectivity phase transition whenever $d\geq 4.$ 
We determine the exact value of the critical point of the existence phase 
transition, and we show that the fractal set is almost surely empty 
at this critical point. 

A key ingredient when analysing the connectivity phase transition is 
to consider a restriction of the full process onto a 
subspace. We show that this restriction results in a fractal ellipsoid
model which we describe in detail, as it is key to obtaining 
our main results. 

In addition we also determine the almost sure Hausdorff dimension of 
the fractal set.
\end{abstract}
\pagestyle{fancy}
\setlength{\headheight}{14pt} 
\fancyhf{}
\rhead{Broman, Elias, Mussini, Tykesson}
\lhead{The fractal cylinder process}
\cfoot{\thepage}

%
%
%
%
%
%
%
%
%
%
%

\section{Introduction and statement of results}
This paper introduces a random fractal model which we call
the fractal cylinder model. Informally speaking, the fractal cylinder model 
is a scale invariant version of the so-called Poisson cylinder model.
Percolation aspects of the Poisson cylinder model was first studied in \cite{TW_2012} where it was shown that
the vacant set undergoes a non-trivial percolative phase transition in 
dimensions $d\geq 4,$ while the analogous result for when $d=3$ was 
established in \cite{HST_2015}. Later, it was established in 
\cite{BT_2016} that the occupied set does not undergo a similar
phase transition in any dimension. In contrast, it was shown that 
for any $d\geq 2$ any two cylinders will be connected by using 
at most $d-2$ other cylinders of the process. The Poisson cylinder model was previously studied in connection with stochastic geometry, see for example \cite{SPIESS_2012}.

The fractal version that we study here is a natural generalization of 
classical models such as the Mandelbrot fractal percolation 
model (see \cite{CCD_1988}) and the class of
Poissonian random fractal models (see \cite{BC_2010})
generated by bounded subsets of $\BR^d$. 
A key difference between the previously mentioned models and the 
fractal cylinder model is that the random objects generating the fractal 
are here unbounded. This introduces infinite range dependencies
which has been absent in all previous random fractal models.
In the non-fractal case, many such extensions of classical models have 
been recently studied. Such extensions include the Poisson cylinder model 
mentioned above, but also the so-called random interlacement model 
(introduced in \cite{S_2010}) and the Brownian interlacement model 
(introduced in \cite{S_2013}).

There are many natural questions to ask about random fractals. In this 
paper we focus on the study of phase transitions and Hausdorff dimensions. 
In order to state our main results we first need to give an informal 
explanation of our model (see further Section \ref{sec:modelsanddef} 
where we give a formal definition with details). 

Let $\mathrm{A}(d,1)$ denote the space of lines in $\BR^d$, and let $\nu_d$ 
be the unique measure (up to scaling) on $\mathrm{A}(d,1)$ which is 
invariant under the isometries of $\R^d$. Then, consider the space 
$\mathrm{A}(d,1) \times (0,1]$ and let 
\[
\omega = \sum_{i\geq1} \delta_{(L_i,r_i)}
\] 
be a locally finite Poisson point process with intensity measure  
\[
\lambda \nu_d \times \indicator ( 0 < r \leq  1 ) r^{-d} \id r, \ \lambda>0,
\]
and let $\Pm_\lambda$ denote the corresponding law. Often, we will suppress $\lambda$ from the notation and write $\Pm$ instead of $\Pm_{\lambda}$. Here 
$\delta_{(L,r)}$ denotes point measure at 
$(L,r)\in \mathrm{A}(d,1) \times (0,1]$. We let 
\[
\V=\V(\omega) = \R^d \setminus \bigcup_{ (L,r) \in \omega } L+ B(o,r)
\]
denote the vacant set of the fractal cylinder process.

As the parameter $\lambda>0$ varies, the random fractal model exhibits 
several phase transitions. The first that we shall consider is between 
the empty phase (i.e. where $\CV=\emptyset$ a.s.) and the non-empty 
phase where $\BP(\CV\neq \emptyset)>0$. The critical value corresponding
to this phase transition is denoted by $\lambda_e$ and is defined by 
\begin{equation} \label{eqn:lambdaedef}
\lambda_e := \inf \left\{ \lambda >0 : \Pm_\lambda \left(  \V = \emptyset \right)=1 \right\}.
\end{equation}
We refer to this phase transition as the existence phase transition. 
We observe that one can also consider similar phase transitions of the 
model restricted to subspaces. Indeed, if we let $H_k:=\BR^k \times \{0\}^{d-k}$ and
we define
\[
\lambda_e(d,k):= \inf \left\{ \lambda >0 : \Pm_\lambda 
\left(  \V \cap H_k = \emptyset \right)=1 \right\},
\]
then our main result concerning $\lambda_e(d,k)$
is as follows.
\begin{theorem} \label{thm:lambdae}
For any $d\geq 2$ and $k \in\{1,\ldots,d\}$ we have that 
$\lambda_e(d,k)=k$ and 
\[
\Pm_{\lambda_e(d,k)} (  \V\cap H_k = \emptyset) = 1.
\]
\end{theorem}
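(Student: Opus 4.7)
The plan is to bound $\lambda_e(d,k)$ from above and below and to handle the critical case separately. For the upper bound $\lambda_e(d,k)\leq k$, fix $\lambda>0$ and a small ball $B = B_{H_k}(x_0,\rho) \subset H_k$. The inclusion
\[
\{B \cap \V \neq \emptyset\} \subseteq \{\text{no single cylinder in }\omega\text{ covers }B\}
\]
together with the Poisson void formula reduces matters to computing the expected number of cylinders $(L,r)$ covering $B$. Using the isotropy of $\nu_d$ (normalized so that $\nu_d(\{L:d(L,o)\leq r\})=r^{d-1}$), the axis $L$ covers $B$ iff $d(L,x_0) \leq r-\rho$, giving expected count
\[
\lambda \int_\rho^1 (r-\rho)^{d-1} r^{-d}\, dr = \lambda \log(1/\rho) + O(1).
\]
Hence $\Pm(B \cap \V \neq \emptyset) \leq C\rho^{\lambda}$; covering $[0,1]^k \cap H_k$ by $O(\rho^{-k})$ such balls and a union bound yields $\Pm(\V \cap H_k \cap [0,1]^k \neq \emptyset) \leq C\rho^{\lambda - k}\to 0$ for $\lambda>k$, and $\V \cap H_k = \emptyset$ a.s.\ then follows by translation invariance.

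For the lower bound $\lambda_e(d,k) \geq k$, I would apply a second-moment/Paley--Zygmund argument to the regularized vacant volume $Z_{r_0} = \mathrm{vol}_k(\V^{r_0} \cap H_k \cap [0,1]^k)$, where $\V^{r_0}$ uses only cylinders with $r \geq r_0$. The void formula gives $\E[Z_{r_0}] = r_0^{\lambda}$, while an inclusion-exclusion in the two-point formula yields $\Pm(x,y \in \V^{r_0}) \leq C\, r_0^{2\lambda} |x-y|^{-\lambda}$. The key input is that the $\nu_d$-measure of lines close to both $x$ and $y$ contributes a $\log(1/|x-y|)$ term to the exponent, producing the Riesz-type kernel. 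Consequently
\[
\E[Z_{r_0}^2] \leq C\, r_0^{2\lambda} \int_{[0,1]^{2k}} |x-y|^{-\lambda}\, dx\, dy,
\]
and the double integral is finite precisely when $\lambda < k$. In this regime Paley--Zygmund gives $\Pm(Z_{r_0}>0)\geq c>0$ uniformly in $r_0$; a tightness and subsequential weak-limit argument applied to the renormalized measures $r_0^{-\lambda}\mathrm{vol}_k(\,\cdot\, \cap \V^{r_0} \cap H_k)$ then produces a nontrivial limit measure supported on $\V \cap H_k$, so that $\Pm(\V \cap H_k \neq \emptyset)>0$ when $\lambda<k$.

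The main obstacle is the critical case $\lambda = k$, where the union bound above becomes $O(1)$ and the second moment blows up. The strategy here is to use the fractal ellipsoid description of the trace process on $H_k$ that is promised in the abstract: the intersections of the cylinders with $H_k$ form a scale-invariant (modulo the $r\leq 1$ cutoff) Poisson process of ellipsoids in $\BR^k$, so the question of whether $\V \cap H_k = \emptyset$ becomes a Mandelbrot--Shepp-type cut-out problem on $\BR^k$. The critical intensity for such a cut-out process sits on the \emph{covered} side, which would give the claimed emptiness at $\lambda=k$. Reducing this rigorously to a classical covering theorem is the delicate piece, and is where the explicit ellipsoid model developed later in the paper becomes indispensable.
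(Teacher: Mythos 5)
Your upper bound (a first-moment union bound) and lower bound (a second-moment/Paley--Zygmund argument) are both sound in spirit and land in the same place as the paper's Propositions 3.2 and 3.4 for the non-critical parameter range. The paper's lower-bound argument counts level-$n$ dyadic boxes untouched by $\omega_n$ and uses compactness of the nested sets $\CV_n^k$; your version works with vacant $k$-volume and then needs a tightness/weak-limit step (the route the paper itself takes later for the Hausdorff dimension bound, Theorem~\ref{thm:Vklowdim}, and which they explicitly describe as ``much more involved'' in Remark~\ref{rem:vkuppdim}). That is a legitimate alternative, just heavier than necessary.

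The genuine gap is the critical case $\lambda = k$. You correctly identify that your union bound is $O(1)$ and your second moment diverges there, but the route you propose --- pass to the induced ellipsoid process on $H_k$ and invoke a Shepp-type covering theorem --- is not developed and is not how the paper handles it, nor is it a routine reduction: the trace process is a Poisson process of \emph{ellipsoids} of varying eccentricity whose diameter moments can even be infinite for $k \geq (d+1)/2$ (see Lemma~\ref{lem:diameterlemma2}), so one cannot simply quote a classical cut-out result. The idea you are missing is Lemma~\ref{lem:afterMn}: if $\CV^k \neq \emptyset$, then the size $|D_n^k|$ of a minimal box-cover of $\CV_n^k$ must tend to infinity almost surely. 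This is proved by a renewal/killing argument --- every time the cover has size at most $L$, the (semi-)scale invariance of the measure gives a uniform positive probability $\alpha^L$ of annihilating the surviving set at the next scale, so ``$0 < |D_n^k| \leq L$ infinitely often'' has probability zero. Coupling this with the first-moment bound $\BE[|M_n^k|] \leq e^{\lambda C_3} 2^{(k-\lambda)n}$ (and $|D_n^k| \leq |M_n^k|$) forces $\lambda < k$ whenever $\BP(\CV^k \neq \emptyset) > 0$, which simultaneously gives $\lambda_e \leq k$ and emptiness at criticality. Without some substitute for this killing lemma, your critical-case step does not close.
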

\noindent

\begin{remark} Theorem \ref{thm:lambdae} determines the value of 
$\lambda_e(d,k)$. Furthermore, it states that at the critical 
value of this phase transition, the model is in the empty phase.

The corresponding result has been established for a general class
of models in \cite{B_2018}. However, in that paper
the objects generating the random fractal had a diameter of at most one,
while here the cylinders are obviously unbounded. Historically, the 
type of result covered by Theorem \ref{thm:lambdae} was considered first by 
Shepp (see \cite{S_1972}) in 1972.
\end{remark}

The second phase transition that we will study is the so-called connectivity 
phase transition. The critical value of this phase transition is defined
by letting
\begin{equation}\label{eqn:connectivityparameter}
\lambda_c := \inf \left\{  \lambda>0 : \Pm_\lambda \left( \V \text{ is totally disconnected} \right)>0 \right\}.
\end{equation}
Some of the earliest results related to the study of this phase 
transition obtained in \cite{CCD_1988} and 
\cite{M_1992} where the so-called Mandelbrot 
fractal percolation model was studied. Among other results, it 
was proven that the critical parameter value was non-trivial.
Later, this phase transition was studied for general Poissonian 
random fractal models
with bounded generating sets in \cite{BC_2010}. There, 
the main result was that at the critical threshold, the models were 
in the connected phase. However, the fact that we are working with 
unbounded generating objects makes the study of this phase transition 
much more complicated.  Our main result concerning the connectivity 
phase transition is the following theorem.
\begin{theorem}\label{thm:connectivitytranstition}
For $d \geq 4,$ we have that $\lambda_c \in(0,\infty).$ When $d=3,$
then for any $\lambda>0,$ $\CV \cap H_2$ almost surely does not contain any 
connected components while for $d=2,$ we have that $\lambda_c=0$.
\end{theorem}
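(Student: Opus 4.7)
The plan is to treat the three regimes $d=2$, $d=3$, and $d\geq 4$ separately, with the bulk of the work going into the lower bound $\lambda_c > 0$ for $d \geq 4$. The matching upper bound $\lambda_c \leq d < \infty$ when $d \geq 4$ is immediate from Theorem \ref{thm:lambdae}: for $\lambda > \lambda_e(d,d) = d$ we have $\Pm_\lambda(\CV = \emptyset) = 1$, and the empty set is trivially totally disconnected, so the event in \eqref{eqn:connectivityparameter} has probability $1$.

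For the $d = 2$ claim and the $d = 3$ claim restricted to $H_2$, the common mechanism is that a single cylinder in $\BR^2$, respectively a single induced ellipse in $H_2 \subset \BR^3$, is a codimension-one obstacle that separates the ambient $2$-plane. I would fix two distinct points $x, y$ in the relevant $2$-plane and compute the intensity of cylinders, respectively induced ellipses, that have neither $x$ nor $y$ inside and that leave $x$ and $y$ in different components of the complement. In the $d = 2$ case, parametrising lines in normal form $(\rho, \theta)$ under $\nu_2 = \id \rho \,\id \theta$, the angular and positional integrals are finite but the radial integral $\int_0^{r_0} r^{-2}\, \id r$ diverges at $0$, since the intensity measure has radial density $r^{-d} = r^{-2}$. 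Hence almost surely infinitely many cylinders separate each fixed pair, and ranging over pairs from a countable dense subset of $\BR^2$ forces $\CV$ to be totally disconnected for every $\lambda > 0$, so $\lambda_c = 0$. For $d = 3$ restricted to $H_2$, the intersection of a cylinder with axis at angle $\alpha$ to $H_2$ and radius $r$ with the plane $H_2$ is an ellipse with semi-axes of order $r$ and $r/\sin\alpha$, and the analogous separation integral is still infinite, with divergence arising both from $r \to 0$ and from $\alpha \to 0$ (nearly-parallel cylinders produce arbitrarily long thin ellipses that play the role of the strips in the $d = 2$ analysis).

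The substantive claim is $\lambda_c > 0$ for $d \geq 4$. I would again restrict the process to $H_2$ and work with the induced fractal ellipsoid model, this time exploiting that the density of lines in $\BR^d$ making angle $\alpha$ with a fixed $2$-plane is proportional to $\sin^{d-3}\alpha \, \id \alpha$; the vanishing of this density at $\alpha = 0$ for $d \geq 4$ yields a quantitative decay of the intensity of highly elongated induced ellipses, making the separation integrals that diverged in the $d = 3$ analysis finite. With that in hand I would adapt the classical fractal percolation arguments of \cite{CCD_1988}, \cite{M_1992}, and \cite{BC_2010} via a truncation-and-renormalisation scheme: first truncate the ellipse elongation at a large finite threshold, prove by a block / coarse-graining argument that for sufficiently small $\lambda > 0$ the truncated vacant set in $H_2$ contains a non-trivial connected component with positive probability, and then show that the discarded long ellipses fail, with positive probability, to disconnect this component. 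Since $\CV \cap H_2 \subseteq \CV$, a non-trivial component in $H_2$ gives one in $\CV$, hence $\lambda_c > 0$. The main obstacle I foresee is this last step: the classical fractal percolation machinery is tailored to bounded, locally independent generating sets and must be extended to handle unbounded-but-light-tailed ellipses together with the long-range spatial dependencies they introduce, likely requiring a careful decoupling between the truncated and discarded parts of the process.
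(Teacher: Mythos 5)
Your treatment of the three regimes tracks the paper's structure, and the $\lambda_c < \infty$ bound via $\lambda_e(d,d)=d$ is exactly the observation the paper leaves implicit. For $d=2$ and for $d=3$ restricted to $H_2$, your mechanism (codimension-one separating objects at every scale, with the separation integral diverging) is the right one in spirit; the paper formalises it through rectangle crossings and an arm-event criterion (Lemma \ref{lem:noarmdisconn}) rather than pairwise point separation: four rectangle crossings around each rational location at each rational scale preclude the one-arm event, hence force total disconnection. This is cleaner than a pairs-of-points argument because it does not require any fixed point to lie in $\CV$ (which a.s.\ never happens); the crossings directly sever any potential component. The intensity computations you sketch ($\int_0^1 r^{-2}\,\id r=\infty$ for $d=2$, and the interplay between small $r$ and near-parallel lines for $d=3$) are the content of the divergent integrals the paper evaluates in the proof of Lemma \ref{lem:ellipsecrossing}, in particular \eqref{eqn:easycrosscomp}.

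For $d\ge 4$, your intuition that the $\sin^{d-3}\alpha$ angular density suppresses elongated induced ellipses is exactly what drives the finiteness of the relevant diameter moments (Lemma \ref{lem:diametermoments2}), but the proposed execution --- a block renormalisation requiring a ``careful decoupling between the truncated and discarded parts'' --- overestimates the difficulty and misses the shortcut. The crucial structural fact is that the induced process on $H_k$ is itself a Poisson process (Theorems \ref{thm:cylellips} and \ref{thm:ballfrac}): there is no long-range dependency to decouple, only a Poisson process of ellipsoids with heavy but integrable tails. The paper exploits this by (i) circumscribing each ellipsoid by a ball so that $\CV_b\subset\CV\cap H_k$, (ii) conditioning on the positive-probability event that no ball of radius $>2$ meets the unit cube (using the tail bound of Lemma \ref{lem:diameterlemma2}), and (iii) coupling the remaining small-ball Poisson process to a standard scale-invariant fractal ball model of \cite{BC_2010} by dominating the radial density $\beta_R R^{-k-1}$ by $\beta_0 R^{-k-1}$, where $\beta_0=\E_{\tilde{\xi}_{k,1}}[\diam(E)^k]$ is finite precisely when $k\le d/2$. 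The known connectivity result for fractal ball models then applies directly, with no renormalisation and no decoupling step. Your route could presumably be pushed through, but it would effectively reprove parts of \cite{BC_2010} and would first have to establish the very Poisson/independence structure that dissolves the dependency concern you raise.
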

\begin{remark}
Obviously, Theorem \ref{thm:connectivitytranstition} is incomplete
in that we do not establish the full result when $d=3.$

There is a difference between a set being topologically connected
and being path connected. In this paper, whenever we refer to a set 
being connected, we mean in the topological sense. However, we mention the 
paper \cite{M_1992} where it is proven that for the Mandelbrot 
fractal percolation model, the set is path connected whenever it is 
topologically connected. It is an open question whether this also holds
for our model.
\end{remark}

The intersection of the non-fractal and the fractal 
cylinder process with $H_k$ both induces a collection of random 
sets in $\BR^k.$ These induced processes are then used in order 
to obtain a proof of Theorem \ref{thm:connectivitytranstition}. We anticipate
that these induced processes will be useful for further studies of 
the fractal cylinder process, and therefore we chose to state our 
main results concerning them here. The result concerns both the 
fractal cylinder process (described above) as well as the 
regular (i.e. non-fractal) cylinder process. This process 
is a random collection of cylinders generated by first picking 
lines $L\in {\mathrm A}(d,1)$ using $\lambda \nu_d$ as the intensity measure, 
and then placing a cylinder $\Fc(L,r)$ of radius $r$ around each such
line. Here we let $\FE^k_o$ denote the set of ellipsoids centred at
the origin $o,$ and we let $\ell_k$ denote $k$-dimensional 
normalized Hausdorff measure (see further Section \ref{sec:dimensions}).



\begin{theorem} \label{thm:cylellips}
Consider the Poisson cylinder model in $\R^d$ with cylinder radius $r$. 
The restriction of this cylinder process to the subspace $H_k$ 
where $k\in\{1,\ldots,d-1\}$ is a Poisson process of
ellipsoids with intensity measure 
\begin{equation}\label{eqn:measdecomp1}
\lambda \ell_k \times \xi_{k,r}.
\end{equation}
Here, $\xi_{k,r}$ is a measure on $\FE_o^k$ given by \eqref{eqn:xikrdef}.
Furthermore, the restriction of the fractal cylinder process to $H_k$ 
is a Poisson process with intensity measure  
\begin{equation}\label{eqn:measdecomp2}
\lambda \ell_k \times \xi_{k},
\end{equation}
where
\[
\xi_k(\cdot)=\int_0^1 \xi_{k,r}(\cdot) r^{-d}\id r.
\]
\end{theorem}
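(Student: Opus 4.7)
The plan rests on two standard tools for Poisson point processes: the mapping theorem, and a disintegration of the invariant line measure $\nu_d$ adapted to the subspace $H_k$. First, observe that the set of lines $L \in \mathrm{A}(d,1)$ whose direction vector has nonzero projection onto $H_k^\perp$ has full $\nu_d$-measure, and for any such line with $d(L, H_k) \le r$ the set $\Fc(L,r) \cap H_k$ is a bounded $k$-dimensional ellipsoid, namely the sublevel set $\{x \in H_k : |x-p|^2 - \langle x-p, u\rangle^2 \le r^2\}$ when $L = p + \R u$. By the mapping theorem, the image of the line Poisson process under $L \mapsto \Fc(L,r) \cap H_k$ is a Poisson process on ellipsoids in $H_k$ whose intensity is $\lambda$ times the pushforward of $\nu_d$.

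Next I would establish the product structure. Parametrize an ellipsoid in $H_k$ by its center $c \in H_k$ together with its centered shape, an element of $\FE_o^k$. A direct computation with the sublevel-set formula above shows that the center of $\Fc(L,r) \cap H_k$ is the unique point of $H_k$ closest to $L$, and that its shape depends only on the direction $u$ of $L$ and the radius $r$. I would therefore reparametrize a transverse line as the pair $(c, u) \in H_k \times \mathrm{Gr}(d,1)$. Starting from the standard disintegration $\nu_d = \int_{\mathrm{Gr}(d,1)} \ell_{u^\perp}\, \id \mu(u)$, where $\ell_{u^\perp}$ is Lebesgue measure on $u^\perp$ and $\mu$ is the invariant measure on the Grassmannian, the change of variables from the foot of $L$ in $u^\perp$ to $c \in H_k$ introduces a Jacobian $J(u)$ that is independent of $c$ by translation invariance along $H_k$. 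This gives $\id \nu_d(L) = \id \ell_k(c) \cdot J(u)\, \id \mu(u)$ on transverse lines. Pushing the second factor forward under $u \mapsto \{x \in H_k : |x|^2 - \langle x, u\rangle^2 \le r^2\} \in \FE_o^k$ produces the measure $\xi_{k,r}$ of \eqref{eqn:xikrdef}, yielding the product intensity \eqref{eqn:measdecomp1}.

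For the fractal case, the underlying Poisson process on $\mathrm{A}(d,1) \times (0,1]$ with intensity $\lambda \nu_d \otimes r^{-d}\, \id r$ is mapped under $(L, r) \mapsto \Fc(L,r) \cap H_k$ to a Poisson process of ellipsoids whose intensity, by the mapping theorem and Fubini, equals
\[
\int_0^1 (\lambda\, \ell_k \times \xi_{k,r})\, r^{-d}\, \id r \; = \; \lambda\, \ell_k \times \xi_k,
\]
establishing \eqref{eqn:measdecomp2}.

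The main obstacle is the explicit determination of the Jacobian $J(u)$ and the resulting pushforward $\xi_{k,r}$. The Jacobian is the $k$-dimensional volume distortion of the linear map $H_k \to u^\perp$, $c \mapsto c - \langle c, u\rangle u$, and can be expressed as a product of sines of principal angles between the line $\R u$ and the subspace $H_k$. This geometric factor controls how elongated the ellipsoid $\Fc(L,r) \cap H_k$ becomes as $L$ tilts toward $H_k$, and its pushforward under the shape map furnishes the explicit form of $\xi_{k,r}$ appearing in \eqref{eqn:xikrdef}.
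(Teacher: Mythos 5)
Your high-level strategy -- apply the Poisson mapping theorem to the map $(L,r)\mapsto \mathfrak{c}(L,r)\cap H_k$, then exploit translation invariance along $H_k$ to split the pushforward measure into a Lebesgue factor for the centre and a ``shape'' factor -- is exactly the paper's strategy. However, there is a concrete gap in the middle step: your claim that ``the shape depends only on the direction $u$ of $L$ and the radius $r$'' is false whenever $k\le d-2$, and the ensuing reparametrization of transverse lines as $(c,u)\in H_k\times \mathrm{Gr}(d,1)$ loses $d-1-k$ degrees of freedom that do affect the ellipsoid.

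Concretely, by Lemma~\ref{lem:cylinderellipsoid} the semi-axes of $E_k(L,r)$ are $\tilde r\,\|a\|/\sqrt{1+\|\auk\|^2}$ and $\tilde r$, where
\[
\tilde r^2 = r^2 - \|\puk\|^2 + \frac{\langle \auk,\puk\rangle^2}{1+\|\auk\|^2}.
\]
The elongation ratio $\|a\|/\sqrt{1+\|\auk\|^2}$ depends only on the direction, but $\tilde r$ also depends on the lateral offset $\puk\in\R^{d-1-k}$ of the line from $H_k$ -- the same effect by which slicing a cylinder off-axis yields a smaller cross-section. Your formula $\{x\in H_k : |x|^2 - \langle x,u\rangle^2 < r^2\}$ is the ellipsoid only when $L$ actually meets $H_k$ (i.e.\ $\puk=0$). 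Correspondingly, a transverse line is not determined by its direction together with the closest point of $H_k$: for fixed $(c,u)$ there is a $(d-1-k)$-dimensional family of such lines, and $\xi_{k,r}$ as defined in \eqref{eqn:xikrdef} integrates over that family (the $\puk$ variable, with the indicator restricting to lines whose cylinder still meets $H_k$). The shape measure is therefore not a pushforward of a measure on $\mathrm{Gr}(d,1)$ alone. The paper sidesteps this by working with the $(a,p)$-parametrization \eqref{eqn:linmeasrep} and decomposing $p=(\pdk,\puk)$: the centre is $\pdk+q_k(\adk,\auk,\puk)$, so the $\pdk$-integral peels off as Lebesgue measure on $\R^k$ while the remaining integral over $(\adk,\auk,\puk)$ yields $\xi_{k,r}$. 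Your argument goes through verbatim only in the case $k=d-1$, where $\auk$ and $\puk$ are empty.
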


\begin{remark} 
Informally, Theorem \ref{thm:cylellips} states that the 
induced model \eqref{eqn:measdecomp1} can be described in the following 
way. The centres of the shapes
are picked according to a Poisson process with 
Lebesgue measure on $\BR^k$ as the 
intensity measure, while the actual shapes are then
given by the measure $\xi_{k,r}$ which is supported on ellipsoids. 
Therefore, Theorem \ref{thm:cylellips} determines the intuitive fact that 
the induced model is indeed an ellipsoid model, and it also gives 
an explicit description of this model. This description (i.e. 
\ref{eqn:xikrdef}) is postponed until Section \ref{sec:cylintersect}, as 
defining it here would require too much space. Apart from its intrinsic
value, Theorem \ref{thm:cylellips} will also be used when proving Theorem 
\ref{thm:connectivitytranstition}.

It should be noted that although we write $\xi_{k,r}$, it 
also depends on the dimension $d.$ The reason for not writing e.g. 
$\xi_{d,k,r}$ 
is that we think of $d$ as being fixed, and so adding 
it to the notation throughout the paper would be unnecessarily
cumbersome. See also the comment just below \eqref{eqn:xikrdef}.
\end{remark}


The last of our main result concerns the Hausdorff dimension 
(see Section \ref{sec:dimensions} for a short overview)
of the random fractal set $\CV.$ 
\begin{theorem} \label{thm:Hausdorff}
For any $\lambda<k$ we have that 
\[
\BP(\dim_{\mathcal{H}}( \V \cap H_k) = k-\lambda)=1.
\]
\end{theorem}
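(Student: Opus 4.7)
The plan is to leverage Theorem \ref{thm:cylellips} to work directly with the induced ellipsoid process on $H_k$, and then pin down $\dim_{\mathcal{H}}(\CV \cap H_k)$ by matching first-moment and energy-method arguments, both built on sharp one- and two-point estimates for a truncated version of $\CV$. I introduce the truncated vacant set
\[
\CV^{(s)} = \BR^d \setminus \bigcup_{(L,r) \in \omega,\, r \geq s} (L + B(o,r)),
\]
so that $\CV = \bigcap_{s > 0} \CV^{(s)}$ is a decreasing intersection. With the normalization of $\nu_d$ that is implicit in $\lambda_e(d,k)=k$, a direct Poisson computation using the invariant-measure formula for lines within distance $r$ of a point yields
\[
\BP(x \in \CV^{(s)}) = s^{\lambda}
\]
for every $x \in \BR^d$ and $s \in (0,1]$. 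This one-point scaling is what dictates the answer $k-\lambda$.

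For the upper bound $\dim_{\mathcal{H}}(\CV \cap H_k) \le k-\lambda$ a.s., I partition a compact window $K \subset H_k$ into $\asymp \epsilon^{-k}$ boxes $Q$ of side $\epsilon$. If some single cylinder covers all of $Q$, then $Q$ is disjoint from $\CV$, so $\BP(Q \cap \CV \neq \emptyset)$ is bounded above by $\exp(-\mu_Q)$ where $\mu_Q$ is the intensity of cylinders covering $Q$. Via the integral-geometric formula for the measure of lines within distance $r-\epsilon$ of $Q$, one obtains $\mu_Q = \lambda \log(1/\epsilon) - O(1)$, hence $\BP(Q \cap \CV \neq \emptyset) \le C\epsilon^{\lambda}$. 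The expected number of intersecting boxes is then at most $C\epsilon^{\lambda - k}$, and Borel--Cantelli along a geometric sequence $\epsilon_n = 2^{-n}$ yields the upper bound.

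For the complementary inequality I use the energy method. Define the random measure
\[
\mu_\epsilon(A) = \epsilon^{-\lambda}\, \ell_k\!\left(A \cap \CV^{(\epsilon)} \cap K\right), \qquad A \subseteq H_k,
\]
so that $\BE[\mu_\epsilon(K)] = \ell_k(K)$. Its expected $s$-energy equals
\[
\BE[I_s(\mu_\epsilon)] = \iint_{K \times K} \frac{\BP(x, y \in \CV^{(\epsilon)})}{\epsilon^{2\lambda}\, |x-y|^s}\, dx\, dy.
\]
The key two-point input is the estimate
\[
\BP(x, y \in \CV^{(\epsilon)}) \asymp \epsilon^{2\lambda}\, \max(|x-y|,\epsilon)^{-\lambda},
\]
which follows from the fact that cylinders of radius $r \gtrsim |x-y|$ cover both points with essentially the same intensity as they cover one point, while cylinders of radius $r \ll |x-y|$ contribute only a bounded amount to the joint log-intensity (the precise bookkeeping requires decomposing $\nu_d$ by the angle of the line with $x-y$). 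Substituting gives $\BE[I_s(\mu_\epsilon)] \le C \iint |x-y|^{-(s+\lambda)}\, dx\, dy$, which is finite for $s < k-\lambda$. A Paley--Zygmund argument produces, with positive probability, subsequential weak limits $\mu$ of $\mu_\epsilon$ supported on $\CV \cap K$ and having finite $s$-energy, so Frostman's lemma yields $\dim_{\mathcal{H}}(\CV \cap H_k) \ge k-\lambda$ with positive probability.

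To upgrade to probability one, I invoke ergodicity of the Poisson process under translations of $\BR^d$ that preserve $H_k$: the event $\{\dim_{\mathcal{H}}(\CV \cap H_k) = k-\lambda\}$ is translation invariant, hence has probability $0$ or $1$, and the lower bound already gives positive probability. In my estimation the main obstacle is the two-point estimate, as one must control the integral over $\mathrm{A}(d,1)$ uniformly across the transition between the regimes $r \ge |x-y|$ and $r < |x-y|$, and one must also verify that the subsequential weak limits of $\mu_\epsilon$ are genuinely supported on the random closed set $\CV$ (rather than on the potentially larger set $\bigcap_{\eta>0}\overline{\CV^{(\eta)}}$), which requires a continuity argument for the cylinder configuration.
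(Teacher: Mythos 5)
Your proposal follows essentially the same route as the paper: first-moment box-counting with $\BP(\text{box survives}) \lesssim \epsilon^{\lambda}$ for the upper bound, the energy/Frostman method built on the one- and two-point estimates $\BP(x\in\CV^{(\epsilon)})=\epsilon^\lambda$ and $\BP(x,y\in\CV^{(\epsilon)})\lesssim \epsilon^{2\lambda}\max(|x-y|,\epsilon)^{-\lambda}$ for the lower bound, and a $0$--$1$ law for the a.s.\ upgrade, which is precisely Theorems~\ref{thm:Vkuppdim}, \ref{thm:Vklowdim}, and Lemma~\ref{lem:zeroone}. Two small remarks: your opening plan to ``leverage Theorem~\ref{thm:cylellips}'' is a red herring --- neither you nor the paper uses the ellipsoid decomposition here, all estimates are done directly on the line process via Lemma~\ref{lem:basicmeasure} and Lemma~\ref{lem:measurebounds_org}; and the Borel--Cantelli phrasing in your upper bound does not parse as stated since $C\epsilon^{\lambda-k}\to\infty$, whereas what one needs is the weighted first-moment bound $\BE[\CH^s_\epsilon]\lesssim \epsilon^{s-(k-\lambda)}\to 0$ for $s>k-\lambda$, which is what the paper computes via $\BE|M_n^k|$. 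Finally, where you extract subsequential weak limits plus Paley--Zygmund, the paper instead shows $X_n(f)=\int f\,d\zeta_n$ is an $L^2$-bounded martingale for each continuous $f$ and uses a separable dense class of test functions to obtain a.s.\ weak convergence of the whole sequence, which makes the positive-mass and support arguments a bit cleaner; your subsequential version would also work but requires the extra care you flag at the end.
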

\noindent

\begin{remark}
As we will see, there is some overlap between 
Theorem \ref{thm:lambdae} and Theorem \ref{thm:Hausdorff}.
This is discussed in further details in Remark~\ref{rem:vkuppdim}.
\end{remark}

The rest of the paper is structured as follows. In Section 
\ref{sec:modelsanddef} we establish notation and define the models
that we study in this paper. In Section \ref{sec:existence},
we study the existence phase transition and prove Theorem \ref{thm:lambdae}
while in Section \ref{sec:HaussofV} we establish the almost sure
Hausdorff dimension of the set $\CV$ by proving Theorem \ref{thm:Hausdorff}.
It turns out that while the standard representation (see Section 
\ref{sec:lincyl}) of the invariant line process is very useful for 
the study of the cylinder fractal model in dimension $d,$ it is not 
suitable for the study of the restriction of said process onto $H_k.$
This is why we in Section \ref{sec:intensitymeasure} find an alternative 
representation which will be useful to us. This representation is 
then used in Section \ref{sec:cylintersect} to establish 
Theorem \ref{thm:cylellips}. 
The only main result that is left to prove deals with the connectivity 
phase transition. For this, we will use Theorem \ref{thm:cylellips}
to couple the induced ellipsoid model with a more standard model, 
namely the so-called fractal ball model (see for instance \cite{MR_1996}, \cite{BC_2010} and \cite{BJT_2017}). However, in order to do 
this we will need to carefully analyse certain statistics of the 
induced ellipsoid model, and this is done in Section 
\ref{sec:anaofellips}. Finally, the coupling with the fractal 
ball model is performed in Section \ref{sec:connectivitydomination},
and this will be used to prove Theorem \ref{thm:connectivitytranstition}
when $d\geq 4.$

\section{Models and definitions}\label{sec:modelsanddef}
In this section, we define the models we study in this paper, 
and in addition, we introduce much of the notation we shall use later.

\subsection{Hausdorff measure and Fractal dimensions} \label{sec:dimensions}

In this subsection we will briefly discuss the concept of Hausdorff 
measures and Hausdorff dimension, see \cite{F_2014} for further details. 

For $F \subset \BR^d,$ we define the $s$-dimensional Hausdorff 
measure of $F$ to be 
\[
\CH^s(F):=\lim_{\delta \to 0} \inf\left\{\sum_{i=1}^\infty \diam(U_i)^s:
\{U_i\}_{i \geq 1} \textrm{ is a } \delta\textrm{-cover of } F\right\},
\]
where $\{U_i\}_{i \geq 1}$ is a $\delta$-cover of $F$ if 
$\diam(U_i)\leq \delta$ for every $i\geq 1$ and 
$F\subset \bigcup_{i=1}^\infty U_i.$ We then define the normalized 
Hausdorff measure $\ell_k(\cdot):=c\CH^k(\cdot)$ where the constant $c$ 
is chosen so that $\ell_k([0,1]^k \times \{0\}^{d-k})=1$. When constructed
is this way, the measure $\ell_k$ is a measure on $\BR^d.$ However, 
it will be convenient to not reference this fact. Thus we will write 
$\ell_k([0,1]^k)$ or $\ell_{d-1}(S)$ if $S$ is a $d-1$-dimensional
surface in $\BR^d$ etc.

Next, the Hausdorff dimension of the set $F$ is defined to be 
\[
\dim_\CH(F):=\inf\{s>0:\CH^s(F)=0\}=\sup\{s>0:\CH^s(F)=\infty\}.
\]

\subsection{General notation} \label{sec:gennotation}
Throughout, if $A\subset {\mathbb R}^d$, we let $A^r$ be the closed 
$r$-neighbourhood of $A$. Furthermore, the $L^2$-norm on $\R^d$ is denoted 
by $\|\cdot \|$ so that 
$B(x,r) = \left\{  y \in \R^d : \| y - x \| \leq r  \right\}$ 
is the closed ball centred at $x$ and with radius $r.$
In a few places it will be important to emphasize the dimension $d,$ and
in those places we shall write $B^d(x,r)$ in place of $B(x,r).$
Recall that for $k\in \{1,\ldots, d-1\}$ we define
$H_k$ to be $\R^k\times \{0\}^{d-k}$. With a slight abuse of notation, we 
will routinely identify $H_k$ with $\R^k$. We let $e_1,\ldots,e_d$ denote the standard basis of $\R^d$.

The unit sphere in $\BR^d$ is the boundary of $B(o,1)$, and 
we denote this by $\partial B(o,1).$
The following expression will surface often, and so we let
\begin{equation} \label{eqn:psidef}
\psi_{d-1}:=\ell_{d-1}(\partial B^d(o,1))=\frac{2\pi^{d/2}}{\Gamma(d/2)}.
\end{equation}
We note for future reference that 
\begin{equation} \label{eqn:ballvolume}
\ell_d(B^d(o,1))=\frac{\pi^{d/2}}{\Gamma(1+d/2)}=\frac{\psi_{d-1}}{d}
\end{equation}
and that 
\begin{equation}\label{eqn:psirecform}
\psi_{l+1}=\frac{2\pi}{l}\psi_{l-1}.
\end{equation}

\medskip

We use the following convention for constants. With $c$ and $c'$ we denote 
strictly positive constants which might only depend on the dimensions $d$ 
and $k\le d$. Its value might change from place to place. If a constant 
depend on other quantities than $d$ or $k$, this will be indicated. 
For example, $c(\lambda)$ stands for a constant depending on $k,d$ 
and $\lambda$. Numbered constants $C_1,\ldots ,C_3$ and $c_1,\ldots c_{10}$ are defined where they first appear and keep their values throughout the paper.

\subsection{Lines and cylinders}\label{sec:lincyl}
Let ${\mathrm A}(d,1)$ be the set of all bi-infinite lines in $\R^d$, 
and let ${\mathrm G}(d,1)$ be the set of of all bi-infinite lines in 
$\R^d$ containing the origin. That is, ${\mathrm A}(d,1)$ is the set of 
all $1$-dimensional affine subspaces of $\R^d$, while ${\mathrm G}(d,1)$ 
is the set of all $1$-dimensional linear subspaces of $\R^d$. 
For any measurable set $A \subset \R^d$ we let $\Line_A$ denote the set of 
lines that intersects $A,$ that is 
$\Line_A=\{L\in {\mathrm A}(d,1):L\cap A \neq \emptyset\}.$ 
For $A,B\subset \R^d$, let $\Line_{A,B} = \Line_A \cap \Line_B$ be 
the set of lines intersecting both $A$ and $B$.
On ${\mathrm A}(d,1)$ there is a unique (up to constants) Haar measue, which 
we denote by $\haara$. Furthermore, we shall assume that $\haara$ is normalized 
so that $\haara(\Line_{B(o,1)})=1.$ 
We can identify a line $L\in {\mathrm G}(d,1)$ with the unique point 
on the upper hemisphere (i.e. where the first coordinate is positive) 
$\partial B^+(o,1)$ which $L$ intersects. With this in mind we see that the
uniform measure on ${\mathrm G}(d,1)$ is simply a multiple of the surface measure on $\partial B(o,1)$, i.e.
\begin{equation}\label{eqn:gduniform}
\frac{2\id \ell_{d-1}( L)}{\psi_{d-1}}.
\end{equation}
The following is a standard representation of the measure $\nu_d$ (see
\cite{SW_2008} Theorem 13.2.12 p.588).  
For any measurable $A \subset \R^d$ we have that 
\begin{equation} \label{eqn:nudformula}
\haara(\Line_A)=\frac{4 \pi}{\psi_{d}\psi_{d-1}}
\int_{{\mathrm G}(d,1)}\int_{L^\perp}
\indicator (L+y\in \Line_A)\id \ell_{d-1}( y)\id \ell_{d-1}( L),
\end{equation}
where here and in the future, $\indicator (\cdot)$ denotes an indicator 
function. 

Note that if $A=B^d(o,1)$, then for any fixed $L,$ 
\[
\int_{L^\perp}
\indicator(L+y\in \Line_{B^d(o,1)})\id\ell_{d-1}(y)
=\ell_{d-1}(B^{d-1}(o,1))=\frac{\psi_{d-2}}{d-1}=\frac{\psi_d}{2 \pi},
\]
where we used \eqref{eqn:psirecform}. Combining this with~\eqref{eqn:gduniform} and~\eqref{eqn:nudformula}, we see that
$\nu_d(\CL_{B(o,1)})=1$ as desired.

For $L\in {\mathrm A}(d,1)$ and $r>0$, we let $\mathfrak{c}(L,r)$ denote the open 
cylinder of base-radius $r$ centred at $L$:
$$
\mathfrak{c}(L,r)=\{x\in \R^d\,:\,\|x-L\|< r\}.
$$

We now state two basic result that we shall make frequent use of 
in the rest of the paper. The proof of the following lemma is an 
elementary exercise using \eqref{eqn:nudformula} and can be found in 
\cite{BM_2017}.

\begin{lemma}\label{lem:basicmeasure}
Let $r  > 0$ and $A \subset \BR^d$ be a measurable set.
We have that  
\mbox{}
 \begin{enumerate}
  \item[a)] for any $c> 0,$ $\nu_d\left(\CL_{cA}\right) 
  = c^{d-1}\nu_d\left(\CL_A\right)$,
  \item[b)] $\nu_d\left(\CL_{B(x,r)}\right)
   = r^{d-1}$,
  \item[c)] $\nu_d\left(\CL_{B(x,r),B(y,r)}\right)
  = r^{d-1}\nu_d\left(\CL_{B(x',1),B(y',1)}\right)$, 
 \end{enumerate}
where $(x',y')$ is any pair of points such that 
$\dist{x}{y} = r \dist{x'}{y'}$.
\end{lemma}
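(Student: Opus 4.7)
The plan is to derive all three statements from the explicit representation \eqref{eqn:nudformula} combined with the isometry invariance of $\nu_d$ (built in, since $\nu_d$ is the Haar measure on $\mathrm{A}(d,1)$).

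For part (a), I would apply \eqref{eqn:nudformula} with $cA$ in place of $A$. For $L\in \mathrm{G}(d,1)$ passing through the origin and $y\in L^\perp$, the translated line $L+y$ meets $cA$ iff $L+y/c$ meets $A$. Making the substitution $y\mapsto cy$ in the inner integral over $L^\perp$ introduces the Jacobian $c^{d-1}$ (because $(d-1)$-dimensional Hausdorff measure on the $(d-1)$-dimensional subspace $L^\perp$ scales as $c^{d-1}$), and the integrand becomes $\indicator(L+y\in\CL_A)$. The outer integral over $\mathrm{G}(d,1)$ is unaffected, giving the claim.

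For part (b), I would first use the isometry invariance of $\nu_d$ to reduce to $x=o$, so $\nu_d(\CL_{B(x,r)})=\nu_d(\CL_{B(o,r)})$. Then apply (a) with $A=B(o,1)$ and $c=r$, together with the normalization $\nu_d(\CL_{B(o,1)})=1$ verified in the text just below \eqref{eqn:nudformula}.

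For part (c), I note that the change-of-variables argument in (a) goes through unchanged when the single indicator $\indicator(L+y\in\CL_A)$ is replaced by the product $\indicator(L+y\in\CL_A)\indicator(L+y\in\CL_B)$, since both factors rescale the same way. This yields the two-set version $\nu_d(\CL_{cA,cB})=c^{d-1}\nu_d(\CL_{A,B})$. Applying it with $c=r$, $A=B(x/r,1)$, $B=B(y/r,1)$ gives $\nu_d(\CL_{B(x,r),B(y,r)})=r^{d-1}\nu_d(\CL_{B(x/r,1),B(y/r,1)})$. Isometry invariance then lets me move the pair $(x/r,y/r)$ to any $(x',y')$ with $\|x'-y'\|=\|x-y\|/r$, which is precisely the stated hypothesis. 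There is really no obstacle here: the lemma is bookkeeping around \eqref{eqn:nudformula}, and the only minor point is keeping track of how the Hausdorff measure on $L^\perp$ transforms under dilation, which is automatic.
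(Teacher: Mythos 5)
Your proof is correct and takes exactly the route the paper indicates: the paper does not write out a proof itself, instead stating that the lemma ``is an elementary exercise using \eqref{eqn:nudformula}'' and citing \cite{BM_2017}, and your argument is precisely that exercise, scaling the inner integral over $L^\perp$ by the Jacobian $c^{d-1}$ and invoking translation/isometry invariance of $\nu_d$. The reduction of (b) and (c) to (a) via $\nu_d(\CL_{B(o,1)})=1$ and the two-indicator version is sound.
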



The next Lemma gives us estimates for the measure of the
set of lines intersecting two distant balls. The proof can be found in 
\cite{TW_2012}.
\begin{lemma}\label{lem:measurebounds_org}
Let $x_1,x_2\in \BR^d$. Then 
there exists constants $c_1$ and $c_2$ depending only on $d$ such that
\[
\frac{c_1}{\dist{x_1}{x_2}^{d-1}} 
\leq \nu_d\left(\CL_{B(x_1, 1),B(x_2, 1)}\right) \leq 
\frac{c_2}{\dist{x_1}{x_2}^{d-1}} ,
\]
for every pair $x_1, x_2$ such that $\dist{x_1}{x_2} \geq 4.$
\end{lemma}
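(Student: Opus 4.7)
The plan is to attack the double-integral representation of $\nu_d$ in \eqref{eqn:nudformula} directly. Without loss of generality set $x_1=o$ and $x_2=re_1$ with $r=\|x_1-x_2\|\ge 4$. A line $L+y$ with $L\in\mathrm{G}(d,1)$ and $y\in L^\perp$ hits $B(x_i,1)$ iff $y\in L^\perp\cap B(P_{L^\perp}(x_i),1)$, so the inner integral in \eqref{eqn:nudformula} is the $(d{-}1)$-dimensional volume of the intersection in $L^\perp$ of two unit balls whose centres are $P_{L^\perp}(x_1)=o$ and $P_{L^\perp}(x_2)$. The distance between these centres equals $\|P_{L^\perp}(re_1)\|=r\sin\theta$, where $\theta\in[0,\pi/2]$ is the angle between $L$ and $e_1$. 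Writing $V(s)$ for the volume of the intersection of two unit balls at distance $s$ apart in $\R^{d-1}$, the function $V$ is continuous, strictly decreasing on $[0,2]$, and vanishes on $[2,\infty)$, so it is a bounded integrand supported on $[0,2]$.

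Next I parametrise the upper hemisphere $\mathrm{G}(d,1)$ by spherical coordinates centred at $e_1$: $d\ell_{d-1}(L)=\sin^{d-2}\theta\,d\theta\,d\omega$, where $\omega$ ranges over a $(d{-}2)$-sphere of total mass $\psi_{d-2}$. Since the integrand depends only on $\theta$, \eqref{eqn:nudformula} becomes
\[
\nu_d(\CL_{B(x_1,1),B(x_2,1)})
=\frac{4\pi\,\psi_{d-2}}{\psi_d\,\psi_{d-1}}\int_0^{\pi/2}V(r\sin\theta)\sin^{d-2}\theta\,d\theta.
\]
I then substitute $u=r\sin\theta$, which gives $\sin^{d-2}\theta\,d\theta=u^{d-2}r^{-(d-1)}(1-u^2/r^2)^{-1/2}\,du$, so
\[
\nu_d(\CL_{B(x_1,1),B(x_2,1)})
=\frac{4\pi\,\psi_{d-2}}{\psi_d\,\psi_{d-1}}\cdot\frac{1}{r^{d-1}}\int_0^{r}\frac{V(u)\,u^{d-2}}{\sqrt{1-u^2/r^2}}\,du.
\]

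Finally, because $V(u)$ is supported on $u\in[0,2]$ and $r\ge 4$, on the effective range of integration we have $u/r\le 1/2$, hence $\sqrt{3}/2\le\sqrt{1-u^2/r^2}\le 1$. The remaining integral $\int_0^2 V(u)u^{d-2}\,du$ is a strictly positive and finite constant depending only on $d$, since $V$ is continuous and positive on $(0,2)$. This yields
\[
\frac{c_1}{r^{d-1}}\;\le\;\nu_d(\CL_{B(x_1,1),B(x_2,1)})\;\le\;\frac{c_2}{r^{d-1}}
\]
with explicit $c_1,c_2>0$ depending only on $d$, which is the claim.

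The only real obstacle is the geometric identification of the inner integral as the intersection volume of two shifted unit balls in $L^\perp$ and the ensuing trigonometry for the distance between their centres; once that is in place the change of variable mechanically extracts the factor $r^{-(d-1)}$, and the hypothesis $r\ge 4$ is only needed to keep the Jacobian $(1-u^2/r^2)^{-1/2}$ between universal constants on the support of $V$.
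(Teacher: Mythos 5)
Your proof is correct, and it is worth noting that the paper itself does not prove this lemma at all: it defers to \cite{TW_2012}, where the bound is obtained by a direct analysis of the line measure (essentially, observing that a line meeting both unit balls must have direction in a spherical cap of angular width of order $\dist{x_1}{x_2}^{-1}$ around the axis joining the centres, and that for each admissible direction the perpendicular offsets form a set of $(d-1)$-volume comparable to that of a unit ball). Your argument is a self-contained and in fact sharper version of the same idea, carried out exactly rather than via cap estimates: identifying the inner integral in \eqref{eqn:nudformula} as the intersection volume $V(r\sin\theta)$ of two unit balls in $L^\perp$ at distance $r\sin\theta$ is correct, the hemisphere parametrization with pole $e_1$ and surface element $\sin^{d-2}\theta\,\id\theta\,\id\ell_{d-2}(\omega)$ is consistent with the normalization of $\ell_{d-1}$ on ${\mathrm G}(d,1)$ used in \eqref{eqn:nudformula} (one can recheck it against the computation $\nu_d(\CL_{B(o,1)})=1$ in Section \ref{sec:lincyl}), and the substitution $u=r\sin\theta$ cleanly extracts the factor $r^{-(d-1)}$ with Jacobian trapped in $[1,2/\sqrt{3}]$ on the support of $V$ because $r\geq 4$. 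Beyond proving the lemma, your computation actually yields the exact asymptotics, namely $r^{d-1}\nu_d\left(\CL_{B(x_1,1),B(x_2,1)}\right)\to \frac{4\pi\psi_{d-2}}{\psi_d\psi_{d-1}}\int_0^2 V(u)u^{d-2}\id u$ as $r\to\infty$, with explicit constants $c_1,c_2$ differing only by the factor $2/\sqrt{3}$; the only minor points to mention are that $V$ need only be bounded, positive near $0$ and supported on $[0,2]$ (strict monotonicity is not needed), and that in the degenerate case $d=2$ the ``equatorial sphere'' consists of two points of total mass $\psi_0=2$, after which the argument goes through unchanged.
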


Next, we describe the parametrization of lines which we will use in this paper. 
Given $(\ba,\bp)\in (\R^{d-1} \times \{1\})\times (\R^{d-1}\times\{0\})$, we 
let $L(\ba,\bp):=\{\ba t +\bp\,:\,t\in \R\}$. Observe that $L(\cdot,\cdot)$ 
is a bijection between $(\R^{d-1} \times \{1\})\times (\R^{d-1}\times\{0\})$ 
and $\tilde{{\mathrm A}}(d,1)$, where $\tilde{{\mathrm A}}(d,1)$ is the set 
of lines in ${\mathrm A}(d,1)$ not parallel to $H_{d-1}$. Since $\nu_d({\mathrm A}(d,1)\setminus \tilde{{\mathrm A}}(d,1))=0$, we can disregard lines in ${\mathrm A}(d,1)\setminus \tilde{{\mathrm A}}(d,1)$ and therefore the discrepancy between ${\mathrm A}(d,1)$ and 
$\tilde{{\mathrm A}}(d,1)$ will hereafter be ignored. Note that the line $L(\ba,\bp)$ intersects $H_{d-1}$ at the point $\bp$, 
and the vector $\ba$ describes the direction of the line. 

For reasons that will be clear later, we will often consider different parts of the vectors $\ba$ and $\bp$ separately. First, we write $\ba=(a_1,\ldots,a_{d-1},1)$ and $\bp=(p_1,\ldots,p_{d-1},0)$. For $k\in \{1,\ldots,d-1\}$, we then let $\adk:=(a_1,\ldots\,a_k)$, $\auk:=(a_{k+1},\ldots,a_{d-1})$, $\pdk:=(p_1,\ldots\,p_k)$ and $\puk:=(p_{k+1},\ldots,p_{d-1})$. With a slight abuse of notation, we then have $\ba=(\adk,\auk,1)$ and $\bp=(\pdk,\puk,0)$. Note that if $k=d-1$, then both $\auk$ and $\puk$ are empty. Depending on the situation, we will write $L$, $L(\ba,\bp)$ or $L(\adk,\auk,\pdk,\puk)$.

Using the above described parametrization of lines, $\nu_d$ can 
be represented as follows. For any measurable $A \subset \R^d,$ 
\begin{equation}\label{eqn:linmeasrep}
\nu_d(\Line_A)=\Upsilon_d 
\int_{\{(\ba,\bp)\,:\,L(\ba,\bp)\in A\}} 
\frac{1}{ \|a\|^{d+1} } \id \adk \id \auk \id \pdk \id \puk
\end{equation}
where 
\begin{equation} \label{eqn:defupsilond}
\Upsilon_d=\frac{4 \pi}{\psi_d \psi_{d-1}}.
\end{equation}
The representation~\eqref{eqn:linmeasrep} is found on p.211 
in~\cite{S_1976} for the case $d=3$. Since we could not locate a 
reference for the case of general $d\ge 3$, we provide a proof in 
Section~\ref{sec:intensitymeasure}.

We note that \eqref{eqn:nudformula} is useful when performing 
calculations in the full space $\BR^d.$ For example, it is
used when proving results such as Lemmas \ref{lem:basicmeasure} 
and \ref{lem:measurebounds_org}. However, in the latter part of this paper we will 
focus on restrictions of the cylinder process to subspaces $H_k,$
and for this, \eqref{eqn:linmeasrep} will be more 
suitable. Indeed, it is essential when proving Theorem \ref{thm:cylellips} 
and therefore also for the proof of Theorem
\ref{thm:connectivitytranstition}.

\subsection{Ellipsoids}\label{sec:ellipsoids}
The set of all open ellipsoids in $\R^k$ will be denoted by 
$\mathfrak{E}^k$ while the subset of 
$\mathfrak{E}^k$ consisting of ellipsoids centered at the origin
will be denoted by $\mathfrak{E}^k_o.$
The letters $E$ and ${\mathbf E}$ will typically 
refer to an ellipsoid and a collection of ellipsoids respectively. 
For $E\in \mathfrak{E}^k$, let $\cent(E)$ be the center of $E$. Moreover, 
let $E_o=E-\cent(E)\in \mathfrak{E}_o^k$. Since an ellipsoid 
$E\in \mathfrak{E}^k$ is uniquely determined by the pair 
$(\cent(E),E_o)\in \R^k \times \mathfrak{E}^k_o$, we will in what 
follows often identify $\mathfrak{E}^k$ with $\R^k\times \mathfrak{E}^k_o$.
If $L\in \Line_{H_k^r}$, then the intersection $\mathfrak{c}(L,r)\cap H_k$
induces an ellipsoid in $\R^k$ as follows. Define the map $E_k$ by 

\begin{equation}\label{eqn:ellipsoiddef}
\begin{array}{cccc}
 E_k(\cdot,\cdot) : & {\mathrm A}(d,1)\times (0,1] &\to & \mathfrak{E}^k \\
& (L,r) & \mapsto & \mathfrak{c}(L,r) \cap H_k.
\end{array}
\end{equation}
The fact that $E_k(L,r)$ is indeed an ellipsoid in $\R^k$ is proved 
in Lemma~\ref{lem:cylinderellipsoid}. For convenience, we will let 
$E_k(L,r)=\emptyset$ whenever $L \not \in \CL_{H_k^r}.$

\subsection{The Poisson cylinder model}\label{sec:pcmodel}

As mentioned in the introduction, we will discuss our results in connection with the standard Poisson cylinder model in Section~\ref{sec:altpcproof} so we give its definition before giving the definition of the fractal Poisson cylinder modell.  We define the following space of point measures on ${\mathrm A}(d,1)$:
\begin{equation}
\widehat{\Omega }= \left\{  \hat{\omega} 
= \sum_{ i \geq 1 } \delta_{L_i} :\,L_i\in {\mathrm A}(d,1),\, 
\hat{\omega}(\Line_K) < \infty, \textrm{ for every compact } 
K \subset \R^d \right\}.
\end{equation}
where $\delta_L$ denotes point measure at $L$.  By a minor abuse of notation, 
we shall not distinguish the random measure 
$\hat{\omega} \in \widehat{\Omega}$ and its support 
${\rm supp}(\hat{\omega}) \subset {\mathrm A}(d,1)$. Similar comments apply below.

Let $\lambda\in [0,\infty)$ and let $\widehat{{\mathbb P}}_\lambda$ 
denote the law of a Poisson point process on $\widehat{\Omega}$ with 
intensity measure $\lambda \nu_d$. In the Poisson cylinder model with 
intensity $\lambda$ and radius $r\ge 0$, we first choose 
$\hat{\omega}$ from $\widehat{\Omega}$ according to 
$\widehat{{\mathbb P}}_\lambda$. Then, around each line in 
$\hat{\omega}$, we center a cylinder of base-radius $r$ and 
consider the random set $\widehat{\mathcal C}$ of 
${\mathbb R}^d$ consisting of the union of all these cylinders:

$$\widehat{\mathcal{C}}=\widehat{\mathcal{C}}(\hat{\omega})=\bigcup_{L\in \hat{\omega}}\mathfrak{c}(L,r),$$
and the corresponding vacant set $\widehat{\CV}=\R^d\setminus \widehat{\CC}$. In Section \ref{sec:altpcproof} we describe how to use results obtained in this paper to give an alternative proof of a result from \cite{TW_2012} concerning percolation in $\widehat{\CV}$.

\subsection{The fractal Poisson cylinder model}\label{sec:fpcmodel}
We now introduce the object of main interest in this paper: the fractal Poisson cylinder model. Consider the space of point measures on ${\mathrm A}(d,1)\times (0,1]$:

\begin{eqnarray*}
\lefteqn{\Omega=\{\omega=\sum_{i\ge 1}\delta_{(L_i,r_i)} \mbox{ where }(L_i,r_i)\in {\mathrm A}(d,1)\times (0,1]}\\ && \mbox{ and }\omega(\Line_K\times K') < \infty \mbox{ for all compact }K \subset \R^d,\,K'\subset (0,1]\}.
\end{eqnarray*}
For $\lambda>0$, we let ${\mathbb P}_{\lambda}$ denote the law of a Poisson process on $\Omega$ with intensity measure $\lambda \nu_d \times \varrho_s$
where $\varrho_s$ is a measure on $(0,1]$ defined by
\begin{equation} \label{eqn:scaleinvariantmeasure}
\id \varrho_s(  r) = \indicator (0<r\le 1)r^{-d} \id r.
\end{equation}
Let $K\subset \R^d,$ and consider the set $\Line_{K}\times (b,c]$
with $0<b<c\leq1.$ For $\epsilon>0,$ we observe that it follows from 
\eqref{eqn:nudformula} that $\nu_d(\Line_{\epsilon K})
=\epsilon^{d-1}\nu_d(\Line_{K})$. Therefore, for $0<\epsilon \leq 1/c$ 
we have that 
\begin{eqnarray*}
\lefteqn{\nu_d\times \varrho_s(\Line_{\epsilon K}
\times (\epsilon b,\epsilon c])
=\nu_d(\Line_{\epsilon K}) \int_{\epsilon b}^{\epsilon c} r^{-d} \id r}\\
& & =\epsilon^{d-1}\nu_d(\Line_{K})\frac{\epsilon^{1-d}(c^{1-d}-b^{1-d})}{1-d}
=\nu_d(\Line_{K})\int_{b}^{c}r^{-d}\id r
=\nu_d\times \varrho_s(\Line_{K}
\times (b,c]),
\end{eqnarray*}
where we used Lemma \ref{lem:basicmeasure} part $a)$ in the second equality.
It follows that $\nu_d \times \varrho_s$ is a semi-scale 
invariant measure (it is semi-scale invariant rather than fully scale 
invariant since there is an upper cut off on the radius of the cylinders).
As above, we will frequently abuse notation and write 
$(L,r)\in \omega$ rather than $(L,r) \in \supp(\omega).$

Given $\omega$ picked according to ${\mathbb P}_{\lambda}$, we then 
define the covered region as  
 $${\mathcal C}(\omega)=\bigcup_{(L_i,r_i)\in \omega} \mathfrak{c}(L_i,r_i),$$
 and the vacant region as 
 $${\mathcal V}(\omega)
 ={\mathbb R}^d\setminus {\mathcal C}(\omega).$$
We shall often write simply $\CC$ in place of $\CC(\omega)$ in place of and similarly for $\CV.$
Note that it follows from the semi-scale invariance that 
${\mathbb P}(o \in {\mathcal V})=0$, and so the set ${\mathcal V}$
is a (semi-)scale invariant fractal set.

\subsection{A 0-1 law}
Let $T_t$ denote a shift in the direction of $t\in \BR^d$. Then, 
let $S_t: \Omega \to \Omega$ be the induced transformation defined 
by the equation
\[
S_t \omega=\bigcup_{(L,r)\in \omega} \delta_{(L+t,r)}.
\]
Then, for any measurable event $F \subset {\mathrm A}(d,1) \times (0,1]$ we
define $S_t(F):=\bigcup_{\omega \in F} S_t(\omega).$ Furthermore, an 
event is called shift-invariant if for any $t\in \BR^d,$
we have that $S_t(F)=F.$
We have the following result.
\begin{lemma} \label{lem:zeroone}
For any shift invariant event $F$ we have that
\[
\BP(F)\in \{0,1\}.
\]
\end{lemma}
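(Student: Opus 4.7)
The plan is to prove mixing of the translation action $\{S_t\}_{t\in\BR^d}$ on $(\Omega,\BP_\lambda)$ and then deduce the 0-1 law from ergodicity in the usual way. First I would note that $\BP_\lambda$ is $S_t$-invariant: the Haar property of $\nu_d$ (invariance under all isometries of $\R^d$, in particular translations) shows that $\lambda\nu_d\times\varrho_s$ is preserved by the lift $(L,r)\mapsto (L+t,r)$, so the Poisson law is preserved.

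Next I would establish mixing through a $\pi$-system of local events. For a compact $K\subset\R^d$ and $\epsilon>0$, call an event \emph{local} if it is measurable with respect to $\omega|_{\CL_K\times(\epsilon,1]}$. The local events form a $\pi$-system that generates the full $\sigma$-algebra on $\Omega$, since any measurable function of $\omega$ can be approximated (in $L^1(\BP_\lambda)$) by local functionals as $K\uparrow\R^d$ and $\epsilon\downarrow 0$ -- here the finiteness of the intensity measure on each set $\CL_K\times(\epsilon,1]$ makes the restrictions honest Poisson processes. Given two local events $A,B$ tied to $(K_1,\epsilon_1)$ and $(K_2,\epsilon_2)$, set $\epsilon=\min(\epsilon_1,\epsilon_2)$, $D_A=\CL_{K_1}\times(\epsilon,1]$, and $D_B^{(t)}=\CL_{K_2-t}\times(\epsilon,1]$, so that $S_t^{-1}B$ is measurable with respect to $\omega|_{D_B^{(t)}}$. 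By Lemma~\ref{lem:measurebounds_org} (applied with the reach of $K_1$ and $K_2-t$),
\[
(\lambda\nu_d\times\varrho_s)\bigl(D_A\cap D_B^{(t)}\bigr)\le \lambda\, \nu_d(\CL_{K_1,K_2-t})\cdot\varrho_s((\epsilon,1])\xrightarrow[|t|\to\infty]{}0.
\]
Decomposing $\omega$ on $D_A\cup D_B^{(t)}$ into its three independent Poisson pieces supported on $D_A\setminus D_B^{(t)}$, $D_B^{(t)}\setminus D_A$, and $D_A\cap D_B^{(t)}$, on the high probability event that the third piece is empty, $A$ and $S_t^{-1}B$ become functions of the first two independent pieces. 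Hence
\[
\bigl|\BP(A\cap S_t^{-1}B)-\BP(A)\BP(B)\bigr|\le 2\,\BP\bigl(\omega(D_A\cap D_B^{(t)})>0\bigr)\longrightarrow 0,
\]
where I also used $\BP(S_t^{-1}B)=\BP(B)$ from the first step. A standard monotone class / $L^2$-approximation argument then upgrades mixing from the $\pi$-system to all measurable events.

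Finally, if $F$ is shift-invariant then $F=S_t^{-1}F$ for every $t$. Applying the mixing statement just proved with $A=B=F$ along any sequence $t_n$ with $|t_n|\to\infty$ yields $\BP(F)=\BP(F\cap S_{t_n}^{-1}F)\to\BP(F)^2$, hence $\BP(F)\in\{0,1\}$. The main obstacle is the density step: the cylinders are unbounded and the Poisson process has infinitely many points of very small radius in any bounded window, so one must be careful in approximating arbitrary events by local ones. This is handled by the monotone convergence of $\omega|_{\CL_K\times(\epsilon,1]}$ to $\omega$ and the fact that, although there are infinitely many small cylinders, the intensity $\nu_d\times\varrho_s$ is $\sigma$-finite, so the local $\sigma$-algebras exhaust the full one.
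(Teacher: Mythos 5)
Your proof is correct, and the core decorrelation estimate is the same as the paper's: both rely on Lemma~\ref{lem:measurebounds_org} (together with Lemma~\ref{lem:basicmeasure}) to show that the intensity of lines hitting two far-apart windows is small, so that local events with distant supports nearly factorize. The routes diverge in how this estimate is leveraged. You prove full mixing of the translation action: first for events local in both the spatial and the radius variable, then for all measurable events via an $L^1$-approximation from the generating algebra of local events, and then deduce the 0-1 law in the standard ergodic-theoretic way. The paper instead applies L\'evy's 0-1 law to $\BP(F\mid \omega|_{\Gamma_{o,R}})$ to approximate $\indicator_F$ by events determined by $\Gamma_{x,R}$; shift invariance of $F$ lets them translate the approximating event to $\Gamma_{R^4e_1,R}$ without changing its joint law with $\indicator_F$, and since $R^4\gg R$ the decorrelation bound forces $\BP(F)=\BP(F)^2$. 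Your approach proves a strictly stronger statement (mixing) and is the more standard dynamical argument, but it requires the generating-algebra density step you correctly flag as the delicate point (the intensity of radii below any $\epsilon>0$ is infinite on any $\CL_K$, so the restriction to $\CL_K\times(\epsilon,1]$ really is essential); the paper's L\'evy-martingale device bypasses that density argument entirely at the cost of the somewhat ad hoc choice of scales $o$ vs.\ $R^4 e_1$.
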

\noindent
Since the proof of Lemma \ref{lem:zeroone} follows by standard methods
(combined with elementary properties of the cylinder process) we 
refer the reader to Appendix \ref{app:ergo}.

\section{The existence phase transition} \label{sec:existence}
The purpose of this section is to prove Theorem \ref{thm:lambdae}.
The general strategy of the proof is similar to the one used in 
\cite{BJT_2017}, and we will prove Theorem \ref{thm:lambdae}
by considering a lower and upper bound on $\lambda_e$ separately.
The lower bound is given in Proposition 
\ref{prop:emptyupperbound} and and its proof uses a second moment method. 
The upper bound is dealt with in Proposition \ref{prop:emptylowerbound}
and is basically a first moment approach. However, some extra steps are 
needed  in order to establish that the random fractal dies 
out at the critical point.

We will now introduce some notation that is used in this section.
First, let
\[
\chi_n^k 
:=\left\{x+[0,2^{-n}]^d:x\in\left(2^{-n}\BZ^k\times\{0\}^{d-k}\right) 
\cap [0,1-2^{-n}]^d\right\},
\]
so that $\chi_n^k$ contains $2^{kn}$ $d$-dimensional sub-boxes of 
$[0,1]^k \times [0,2^{-n}]^{d-k}$ with side length $2^{-n}$
and with non-overlapping interior. 
For any $k,$ we will call an element $X \in \chi_n^k$ a level 
$n$ box. Furthermore, let
\[
\omega_n := \{(L,r) \in \omega: 2^{-n} \leq r \leq 1\}
\]
and define
\[
\CV_n := \BR^d \setminus \bigcup_{(L,r) \in \omega_n} \Fc(L,r).
\]
We immediately see that $\CV_n \downarrow \CV$.
Furthermore, for $0\leq m<n$ we define
\[
\CV_{m,n} 
=\BR^d \setminus \bigcup_{(L,r) \in \omega_n \setminus \omega_m} \Fc(L,r).
\]
Next, let
\[
m_n^k := \{X \in \chi^k_n : \nexists (L,r) \in \omega_n \mbox{ such that } \Fc(L,r) \cap X \neq \emptyset\},
\]
be the set of level $n$ boxes in $\chi^k_n$ that are untouched by 
the Poisson process $\omega_n.$ Moreover, let
\[
M_n^k := \{X \in \chi_n^k : \nexists (L,r) \in \omega_n \text{ such that } 
X \subset \Fc(L,r)\}
\]
be the set of level $n$ boxes in $\chi_n^k$ not covered by a single 
cylinder. 

Our first result is a proposition which establishes a number of 
preliminary estimates. These will be useful when we prove
Theorem \ref{thm:lambdae}, and also for proving Theorem \ref{thm:Hausdorff}.
\begin{proposition}\label{prop:list}  
For all $1 \leq k\leq d$,  $X \in \chi_n^k$ and 
$x,y \in [0,1]^d$ we have that
\begin{itemize}
\item[a)]  $\BP(X \in m_n^k) \geq  e^{-\lambda C_1} 2^{-\lambda n};$ 
\item[b)]  $\BP(x \in \CV_n) = 2^{-\lambda n};$
\item[c)]  $\BP(x,y \in \CV_n) 
\leq e^{\lambda C_2} 2^{-2\lambda n} \|x-y\|^{-\lambda};$
\item[d)]  $\BP(X \in M_n^k) \leq  e^{\lambda C_3} 2^{-\lambda n}$ whenever $2^{-n}\sqrt{d}\le 1.$
\end{itemize}
Here, $C_1,C_2$ and $C_3$ are constants depending on $d$ only.
\end{proposition}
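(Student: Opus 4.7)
The plan is to use the void-probability formula for Poisson point processes. Each of the four events is of the form $\omega\cap F = \emptyset$ for some $F \subseteq \mathrm{A}(d,1)\times (0,1]$, and hence has probability
\[
\exp\left( -\lambda \int_{2^{-n}}^{1} \nu_d(\Gamma(r))\, r^{-d}\, dr \right),
\]
where $\Gamma(r)$ is the slice of $F$ at radius $r$. The benchmark computation is $\int_{2^{-n}}^1 r^{d-1}\cdot r^{-d}\,dr = n \ln 2$, which yields $2^{-\lambda n}$; so each part reduces to comparing $\nu_d(\Gamma(r))$ with $r^{d-1}$ up to lower-order corrections that integrate to a dimensional constant.

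Part (b) is immediate since $\Gamma(r) = \CL_{B(x,r)}$ has measure $r^{d-1}$ exactly by Lemma~\ref{lem:basicmeasure}(b). For (a), $\Fc(L,r)$ touches $X$ iff $L\in \CL_{X^r}$; since $X$ sits in a ball of radius $s:=2^{-n}\sqrt d/2$ around its centre, $\nu_d(\CL_{X^r}) \leq (s+r)^{d-1}$. The plan is to expand this binomial and integrate termwise: the leading term gives $n\ln 2$, and each correction $\binom{d-1}{j}s^j(2^{nj}-1)/j$ is bounded by a dimensional constant thanks to the scaling identity $s^j\cdot 2^{nj} = (\sqrt d/2)^j$, which yields (a).

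For (c), inclusion-exclusion applied to $\CL_{B(x,r)\cup B(y,r)} = \CL_{B(x,r)}\cup \CL_{B(y,r)}$ converts the calculation into
\[
\BP(x,y \in \CV_n) = 2^{-2\lambda n}\exp\left(\lambda\int_{2^{-n}}^1 \nu_d(\CL_{B(x,r),B(y,r)})\, r^{-d}\,dr\right),
\]
so the task is to bound the right-hand integral by $\ln(1/\|x-y\|)+ C$ for a dimensional constant $C$. I would split at $r=\|x-y\|/4$: on the short range Lemma~\ref{lem:basicmeasure}(c) and Lemma~\ref{lem:measurebounds_org} (applicable once the rescaled distance exceeds $4$) give $\nu_d(\CL_{B(x,r),B(y,r)})\leq c_2 r^{2(d-1)}\|x-y\|^{-(d-1)}$, whose integral against $r^{-d}$ is a dimensional constant; on the long range the trivial bound $\nu_d(\CL_{B(x,r),B(y,r)})\leq r^{d-1}$ produces a logarithmic term $\leq \ln(4/\|x-y\|)$. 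The subcase $\|x-y\|/4 < 2^{-n}$ is absorbed by observing that then $n\ln 2 \leq \ln(1/\|x-y\|) + 2\ln 2$.

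Part (d) is the delicate step: I need a lower bound on $\nu_d(\{L:X\subset \Fc(L,r)\})$ whose leading term is exactly $r^{d-1}$. The key geometric idea is to fix some $x_X\in X$ and observe that if $L$ comes within distance $r - 2^{-n}\sqrt d$ of $x_X$ (which requires $r\geq 2^{-n}\sqrt d$, explaining the assumption $2^{-n}\sqrt d\leq 1$), then the triangle inequality combined with $\mathrm{diam}(X) = 2^{-n}\sqrt d$ forces every point of $X$ to lie within $r$ of $L$, so $X\subset \Fc(L,r)$. This gives $\nu_d(\{L:X\subset \Fc(L,r)\}) \geq (r-2^{-n}\sqrt d)^{d-1}$, and then the tangent-line inequality $(r-a)^{d-1}\geq r^{d-1} - (d-1) a r^{d-2}$ (convexity of $u\mapsto u^{d-1}$) applied with $a = 2^{-n}\sqrt d$ preserves the leading $r^{d-1}$ after integration: one checks that the correction term $(d-1)\cdot 2^{-n}\sqrt d \int_{2^{-n}\sqrt d}^1 r^{-2}\,dr$ is bounded by a dimensional constant since $2^{-n}\sqrt d \cdot 2^n/\sqrt d = 1$. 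The main obstacle is precisely this extraction of the coefficient $1$: a naive choice of admissible lines (e.g.\ those within $r/2$ of the centre of $X$) would produce only $(r/2)^{d-1}$ and hence the wrong exponent $2^{-(d-1)}n\ln 2$ in the final bound, which is useless for (d); the shrunken-ball construction together with the convexity linearization is what recovers the sharp coefficient.
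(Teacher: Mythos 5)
Your proposal is correct and follows essentially the same route as the paper's proof: the void-probability reduction, the benchmark integral $\int_{2^{-n}}^1 r^{-1}\,\id r = n\log 2$, a containing ball plus binomial expansion for (a), inclusion--exclusion with an integral split at $\|x-y\|/4$ for (c) using Lemmas~\ref{lem:basicmeasure} and \ref{lem:measurebounds_org}, and the shrunken-ball lower bound $\nu_d(\{L : X\subset\Fc(L,r)\})\geq (r-2^{-n}\sqrt d)^{d-1}$ for (d). The only substantive deviation is in finishing (d), where you replace the paper's binomial expansion of $(r-2^{-n}\sqrt d)^{d-1}$ by the tangent-line bound $(r-a)^{d-1}\ge r^{d-1}-(d-1)ar^{d-2}$; this is a clean shortcut and yields a (different but still purely dimensional) constant $C_3$, so the argument goes through.
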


\begin{proof}
We will prove the statements in order.  

\medskip

\noindent
Part $a)$: Begin by noting that for any $X \in \chi_n^k$ we have that 
$\BP(X \in m_n^k) = \BP([0,2^{-n}]^d \in m_n^k) $ by translation 
invariance. Moreover, $[0,2^{-n}]^d \subset B(o,2^{-n}\sqrt{d})$ 
since $B(o,2^{-n}\sqrt{d})$ is a closed ball. Therefore, if 
the box $[0,2^{-n}]^d$ is hit by a cylinder, then the ball 
$B(o,2^{-n}\sqrt{d})$ must also be hit by the same cylinder.
Hence,
\[
\BP([0,2^{-n}]^d \in m_n^k) 
\geq \BP\left(\left\{(L,r) \in \omega_n : B(o,2^{-n}\sqrt{d}) \cap \Fc(L,r)
\neq \emptyset \right\} = \emptyset \right).
\]
Now, by Lemma \ref{lem:basicmeasure} part $b)$,
\begin{eqnarray}\label{eq:mnprob}
\lefteqn{\BP\left(\{(L,r) \in \omega_n :
B(o,2^{-n}\sqrt{d}) \cap \Fc(L,r) \neq  \emptyset\} = \emptyset \right)} \\
&&= {\rm exp} \left(- \lambda \nu_d \times \varrho_s \left((L,r) \in \omega_n : 
B(o,2^{-n}\sqrt{d}) \cap \Fc(L,r) \neq \emptyset\right)\right) \nonumber\\
&& = {\rm exp} \left(-\lambda\int_{2^{-n}}^1 \nu_d\left(\CL_{B(o,r + 2^{-n}\sqrt{d})}\right)r^{-d} \id r \right)\nonumber \\
&& = {\rm exp} \left(-\lambda \int_{2^{-n}}^1 \left(r +2^{-n}\sqrt{d}\right)^{d-1}r^{-d} \id r \right).\nonumber
\end{eqnarray}


We have that
\begin{eqnarray}\label{eq:intbyparts}
\lefteqn{\int_{2^{-n}}^1 \left(r +2^{-n}\sqrt{d}\right)^{d-1}r^{-d} \id r}\\ && =\int_{2^{-n}}^1\sum_{i=0}^{d-1}\binom{d-1}{i}r^{d-i-1}(2^{-n}\sqrt{d})^i r^{-d}\id r\nonumber\\&&=\sum_{i=0}^{d-1}\binom{d-1}{i} (2^{-n}\sqrt{d})^i\int_{2^{-n}}^1 r^{-1-i}\id r \nonumber\\ && = n \log 2 + \sum_{i=1}^{d-1}d^{i/2}\binom{d-1}{i}\left(\frac{1}{i}-\frac{2^{-ni}}{i}\right)\\ &&\le n\log 2+ \sum_{i=1}^{d-1}\frac{d^{i/2}}{i}\binom{d-1}{i} =n \log 2 +C_1.\nonumber
\end{eqnarray}
Plugging this into \eqref{eq:mnprob} gives the desired result.

\medskip

\noindent
Part $b)$: Using Lemma \ref{lem:basicmeasure} part $b)$,
\begin{eqnarray*}
\lefteqn{\BP(x \in \CV_n) = \exp\left(-\lambda \int_{2^{-n}}^1 \int_{{\mathrm A}(d,1)} \indicator(\{L \in {\mathrm A}(d,1) : x \in \Fc(L,r)\})
\id \nu_d( L)\id \varrho_s(r)\right)}\\
&& = \exp\left(-\lambda \int_{2^{-n}}^1 \nu_d(\CL_{B(x,r)})r^{-d} \id r\right)  
= \exp\left(-\lambda \int_{2^{-n}}^1r^{-1} \id r\right)  = 2^{-\lambda n}.
\end{eqnarray*}

\medskip

\noindent
Part $c)$: Let $\CP_x^n = \{(L,r)\in {\mathrm A}(d,1) \times (2^{-n},1]: x \in \Fc(L,r)\}$ 
denote the set of cylinders with radii in $(2^{-n},1]$ that covers 
the point $x$. We have that
\begin{eqnarray}\label{eq:probx1x2inmn}
\lefteqn{\BP(x, y \in \CV_n)}\\
&&= \BP(\{(L,r) \in \omega_n : x \in \Fc(L,r)\} = \emptyset,\{(L,r) \in \omega_n : y \in \Fc(L,r)\} = \emptyset)\nonumber \\
&&= {\rm exp}\left(-\lambda \nu_d \times \varrho_s
\left(\CP^n_x \cup \CP^n_y\right)\right) 
 = {\rm exp}(-\lambda (2\nu_d \times \varrho_s(\CP^n_x)
 -\nu_d \times \varrho_s(\CP^n_x \cap \CP^n_y)))\nonumber \\
& & =2^{-2 \lambda n} {\rm exp}
(\lambda \nu_d \times \varrho_s(\CP^n_x \cap \CP^n_y)),
\nonumber 
\end{eqnarray}
where we used translation invariance and part $b)$ of the current lemma. 
We now turn our attention to $\nu_d \times \varrho_s(\CP^n_x \cap \CP^n_y)$. 
As before, note that
\begin{eqnarray}\label{eq:mub1b2}
\lefteqn{\nu_d \times \varrho_s(\CP^n_x \cap \CP^n_y) 
= \int_{2^{-n}}^1 \int_{{\mathrm A}(d,1)} \indicator 
\left(\CP^n_x \cap \CP^n_y\right)\id \nu_d(L)\id \varrho_s( r)} \\
&& = \int_{2^{-n}}^1 \nu_d\left(\CL_{B\left(x, r\right), B\left(y,r \right)}\right)\id \varrho_s(r) \nonumber 
= \int_{2^{-n}}^1 r^{d-1}\nu_d(\CL_{B(x',1), B(y',1)})r^{-d}\id r,\nonumber
\end{eqnarray}
where we used Lemma \ref{lem:basicmeasure} part $c)$ and where 
$x', y'$ are such that $\dist{x'}{y'}=\dist{x}{y}/r.$ Because of this, 
we have that for fixed $x, y$ the ``effective'' distance $\dist{x'}{y'}$ 
will be large whenever $r$ is small. Furthermore, 
if $r \leq \dist{x}{y}/4$, then $\dist{x'}{y'} = \dist{x}{y}/r \geq 4$ 
and in this case we can use Lemma 
\ref{lem:measurebounds_org} 
to obtain that
\[
\nu_d\left(\CL_{B(x',1),B(y',1)}\right) 
\leq c_2 \dist{x'}{y'}^{1-d}.
\] 
It is therefore natural to split the integral on the right hand side of 
\eqref{eq:mub1b2} into two parts. The first integral will be over 
small $r$ so that $x', y'$ are well separated, while the second integral 
part is over larger $r$. 
Now, if $\dist{x}{y} \geq 2^{2-n}$ we can write
\begin{eqnarray}\label{eq:integralsplit}
\lefteqn{\int_{2^{-n}}^1 r^{-1}
\nu_d\left(\CL_{B(x',1),B(y',1)}\right)\id r}\\
&&=\int_{2^{-n}}^{\dist{x}{y}/4}r^{-1}
\nu_d\left(\CL_{B(x',1),B(y',1)}\right)\id r 
+\int_{\dist{x}{y}/4}^1 r^{-1}\nu_d\left(\CL_{B(x',1),B(y',1)}\right)\id r. 
\nonumber
\end{eqnarray}
By Lemma \ref{lem:measurebounds_org} we have that 
$\nu_d\left(\CL_{B(x',1),B(y',1)}\right) \leq c_2 \dist{x'}{y'}^{1-d} 
= c_2 \dist{x}{y}^{1-d} r^{d-1}$ and then
\begin{eqnarray}\label{eq:integralsmallr}
\lefteqn{\int^{\dist{x}{y}/4}_{2^{-n}} r^{-1}
\nu_d\left(\CL_{B(x',1),B(y',1)}\right)\id r
\leq c_2 \dist{x}{y}^{1-d} \int^{\dist{x}{y}/4}_{2^{-n}}r^{d-2}\id r}\\
&& = \frac{c_2}{d-1} \dist{x}{y}^{1-d}
\left(\left(\frac{\dist{x}{y}}{4}\right)^{d-1}-2^{-n(d-1)} \right)
\leq \frac{c_2}{(d-1)4^{d-1}}.\nonumber
\end{eqnarray}
We will now provide a bound to the second term in the right hand 
side of Equation 
\eqref{eq:integralsplit}.
Start by noting that $\CL_{B(x',1),B(y',1)} \subset \CL_{B(x',1)}$ and  
$\nu_d(\CL_{B(x',1)})=1$ so that  
$\nu_d(\CL_{B(x',1),B(y',1)})\leq 1$. Therefore
\begin{equation}\label{eq:trivialboundD}
\int_{\dist{x}{y}/4}^1 r^{-1}\nu_d\left(\CL_{B(x',1),B(y',1)}\right)\id r 
\leq \int_{\dist{x}{y}/4}^1 r^{-1} \id r
= - \log(\dist{x}{y}/4).
\end{equation}
Using Equations \eqref{eq:integralsplit}, \eqref{eq:integralsmallr} and 
\eqref{eq:trivialboundD} we conclude that
\begin{equation}\label{eq:conclusionboundD}
\int_{2^{-n}}^1 r^{-1}
\nu_d\left(\CL_{B(x',1),B(y',1)}\right)\id r
\leq \frac{c_2}{(d-1)4^{d-1}} - \log(\dist{x}{y}/4).
\end{equation}
On the other hand, if $\dist{x}{y}< 2^{2-n}$ then $\dist{x}{y}/4 < 2^{-n}$, 
and so we can estimate \eqref{eq:integralsplit} by
\begin{eqnarray*}
\lefteqn{\int_{2^{-n}}^1 r^{-1}
\nu_d\left(\CL_{B(x',1),B(y',1)}\right)\id r}\\
&& \leq \int_{\dist{x}{y}/4}^{1}r^{-1}
\nu_d\left(\CL_{B(x',1),B(y',1)}\right)\id r 
\leq -\log(\dist{x}{y}/4), \nonumber
\end{eqnarray*}
where the last inequality comes from \eqref{eq:trivialboundD}.
Thus, we see that \eqref{eq:conclusionboundD} holds for all $\dist{x}{y}>0.$ 
Combining \eqref{eq:probx1x2inmn}, \eqref{eq:mub1b2} and 
\eqref{eq:conclusionboundD} we obtain
\begin{eqnarray}\label{eq:twopointprob}
\lefteqn{\BP(x, y \in \CV_n)}\\
&& \leq 2^{-2\lambda n}{\rm exp} 
\left(\lambda\left(\frac{c_2}{(d-1)4^{d-1}}-\log(\dist{x}{y}/4)\right)\right) 
= e^{\lambda C_2} 2^{-2\lambda n}\dist{x}{y}^{-\lambda},\nonumber
\end{eqnarray}
as desired.

\medskip

\noindent
Part $d)$: We now assume that $2^{-n}\sqrt{d}\le 1$. Let $p$ be the center point of the box $X$. Clearly, 
if $X$ is not covered by a single cylinder, then the ball 
$B(p,2^{-n}\sqrt{d})$ circumscribing it cannot be covered by 
a single cylinder. Furthermore, note that 
$B(p,2^{-n}\sqrt{d}) \subset \Fc(L,r)$ is equivalent to the condition 
that $L$ intersects the ball $B(p,r - 2^{-n}\sqrt{d})$. 
Therefore, 
\begin{eqnarray*}
\lefteqn{\BP(X \in M_n^k) 
\leq \BP\left(\{(L,r) \in \omega_n : B(p,2^{-n}\sqrt{d}) \subset \Fc(L,r)\} = \emptyset \right)} \\
&&=\exp\left(-\lambda \int_{2^{-n}}^1 
\nu_d\left(\CL_{B(p,r - 2^{-n}\sqrt{d})}\right)r^{-d}\id r\right)\\
&& = \exp\left(-\lambda \int_{2^{-n}\sqrt{d}}^1
\left(r - 2^{-n}\sqrt{d}\right)^{d-1}r^{-d}\id r\right).
\end{eqnarray*}
Now, note that
\begin{eqnarray*}
\lefteqn{\int_{2^{-n}\sqrt{d}}^1\left(r - 2^{-n}\sqrt{d}\right)^{d-1}r^{-d}\id r}\\ &&
= \int_{2^{-n}\sqrt{d}}^1\left(\sum_{i=0}^{d-1}\binom{d-1}{i}r^{d-1-i}
\left(-2^{-n}\sqrt{d}\right)^i\right)r^{-d}\id r
\\
&& = n \log 2-\frac{\log d}{2} +\sum_{i=1}^{d-1}\binom{d-1}{i}\frac{(-1)^i}{i}(1 - 2^{-in}d^{i/2}) \\ &&
\geq n\log 2 -\frac{\log d}{2}-\sum_{\substack{i=1 \\ i \text{ odd}}}^{d-1}\binom{d-1}{i}\frac{1}{i} \\
&& = n \log 2 - C_3.
\end{eqnarray*}
Thus we have that,
\[
\BP(X \in M_n^k) 
\leq \exp\left( -\lambda n \log 2 + \lambda C_3 \right) 
= e^{\lambda C_3} 2^{-\lambda n},
\]
as desired.
\end{proof}

\noindent
\begin{remark}
Although we do not need to know the values of 
$C_1, C_2$ and $C_3$ for the results of this paper, 
 we observe for possible future reference that
\begin{equation} \label{eqn:constants}
C_1 = \sum_{i=1}^{d-1}\frac{d^{i/2}}{i}\binom{d-1}{i}, \ \ 
C_2 = \frac{c_2}{(d-1)4^{d-1}}+\log 4 \ \text{ and } \ 
C_3 = \frac{\log{d}}{2}+\sum_{\substack{i=1 \\ i \text{ odd}}}^{d-1}\binom{d-1}{i}
\frac{1}{i}.
\end{equation}
\end{remark}

Throughout the rest of this section and Section \ref{sec:dimensions}, 
it will be convenient to use the notation 
$\CV^k:=\CV\cap ([0,1]^k \times \{0\}^{d-k})$
and $\CV^k_n:=\CV_n\cap ([0,1]^k \times \{0\}^{d-k})$. 
We will also let 
$\CV^k_{m,n}:=\CV_{m,n}\cap ([0,1]^k \times \{0\}^{d-k})$ when $m<n.$

We are ready to prove the following proposition which provides us 
with one direction of Theorem \ref{thm:lambdae}.
\begin{proposition}\label{prop:emptyupperbound}
For $d \geq 2$ and any $1\leq k \leq d$, we have that $\lambda_e(d,k) \geq k $.
\end{proposition}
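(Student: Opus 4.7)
The plan is to run a second moment argument on the $k$-dimensional volume of the vacant set restricted to the unit $k$-cube. To show $\lambda_e(d,k)\ge k$, it suffices to prove that for every $\lambda<k$ one has $\BP_\lambda(\CV\cap H_k\neq\emptyset)>0$, and since $\CV_n^k:=\CV_n\cap ([0,1]^k\times\{0\}^{d-k})$ decreases to $\CV\cap ([0,1]^k\times\{0\}^{d-k})$ through a nested sequence of compact subsets of $\R^d$ (each $\CV_n$ is closed because the cylinders are open, and $\omega_n$ is locally finite), Cantor's intersection principle tells us that $\CV\cap ([0,1]^k\times\{0\}^{d-k})\ne\emptyset$ is equivalent to $\{\CV_n^k\ne\emptyset\text{ for every }n\}$, a decreasing event. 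Hence it is enough to exhibit a constant $c(\lambda)>0$ with $\BP_\lambda(\CV_n^k\ne\emptyset)\ge c(\lambda)$ uniformly in $n$.

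To produce such a bound I will apply Paley--Zygmund to the nonnegative random variable
\[
Y_n := \ell_k(\CV_n^k)=\int_{[0,1]^k\times\{0\}^{d-k}}\indicator(x\in\CV_n)\,\id\ell_k(x).
\]
Fubini together with part (b) of Proposition~\ref{prop:list} yields
\[
\E[Y_n]=\int_{[0,1]^k\times\{0\}^{d-k}}\BP(x\in\CV_n)\,\id\ell_k(x)=2^{-\lambda n},
\]
while a second application of Fubini combined with part (c) gives
\[
\E[Y_n^2]\le e^{\lambda C_2}\,2^{-2\lambda n}\int_{[0,1]^k\times[0,1]^k}\|x-y\|^{-\lambda}\,\id\ell_k(x)\id\ell_k(y).
\]

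The point now is that the double integral above is a standard Riesz energy of $[0,1]^k$ and is finite precisely when $\lambda<k$: after fixing $x$, the integral $\int_{[0,1]^k}\|x-y\|^{-\lambda}\id\ell_k(y)$ is controlled by $\int_0^{\sqrt{k}}r^{k-1-\lambda}\,\id r$, which converges for $\lambda<k$. Writing $I_\lambda$ for this finite constant, Paley--Zygmund yields
\[
\BP(\CV_n^k\ne\emptyset)\ge \BP(Y_n>0)\ge\frac{(\E[Y_n])^2}{\E[Y_n^2]}\ge\frac{1}{e^{\lambda C_2}I_\lambda}>0,
\]
uniformly in $n$, which by the nested-compact argument of the first paragraph gives $\BP_\lambda(\CV\cap H_k\ne\emptyset)>0$ and hence $\lambda_e(d,k)\ge k$.

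No step here is technically delicate; the only place requiring care is verifying the finiteness of the Riesz energy at the exact threshold, which is exactly what forces the hypothesis $\lambda<k$ and accounts for the matching lower and upper bounds in Theorem~\ref{thm:lambdae}. (The 0--1 law of Lemma~\ref{lem:zeroone} is not needed for this direction, since positive probability of non-emptyness is all that is required.)
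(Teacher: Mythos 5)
Your proof is correct, and it is a genuine variant of the paper's argument rather than a reproduction. Both are second-moment (Paley--Zygmund) arguments combined with the same compactness/monotonicity reduction to a uniform-in-$n$ lower bound on the probability that $\CV_n^k$ is nonempty. The paper applies the method to the discrete count $|m_n^k|$ of level-$n$ boxes untouched by $\omega_n$: the first moment comes from Proposition~\ref{prop:list}~(a), and the second moment is controlled by a double sum over pairs of boxes, split into near pairs (handled by part~(b)) and far pairs (handled by part~(c), summed over dyadic distance scales). You instead apply the method to $Y_n=\ell_k(\CV_n^k)$, so the first moment follows directly from part~(b) and the second moment becomes the Riesz $\lambda$-energy $\int_{[0,1]^k\times[0,1]^k}\|x-y\|^{-\lambda}$, which is finite iff $\lambda<k$. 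The continuous version sidesteps the box-pair bookkeeping and makes the threshold transparent, at the cost of being essentially a specialization of what the paper uses later for the Hausdorff lower bound: your $Y_n$ is $2^{-\lambda n}\|\zeta_n\|_{TV}$ in the notation of Section~\ref{sec:HaussofV}, and your energy bound is exactly the $r=0$ case of~\eqref{eq:secondmomenttv}. Indeed the paper explicitly notes (Remark~\ref{rem:vkuppdim}) that Theorem~\ref{thm:Vklowdim} implies Proposition~\ref{prop:emptyupperbound}, but chose the box-counting proof as a more elementary "first principles" version. Your argument sits in between: same continuous machinery, but stopping at non-emptiness instead of pushing through the weak limit and Frostman's lemma.

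Two minor remarks. First, the reason $\CV_n$ is closed is simply that it is the complement of a union of open cylinders; local finiteness of $\omega_n$ is not needed for that (though it is true and useful elsewhere). Second, note that $\{Y_n>0\}$ is a priori smaller than $\{\CV_n^k\ne\emptyset\}$, so the inequality $\BP(\CV_n^k\ne\emptyset)\ge\BP(Y_n>0)$ goes the right way; in fact you are proving the stronger conclusion that $\CV_n^k$ has positive $\ell_k$-measure with probability bounded below uniformly in $n$. Also, where the paper invokes reverse Fatou on $\{|m_n^k|>0\}$, you use the monotonicity of the events $\{\CV_n^k\ne\emptyset\}$ directly, which is slightly cleaner. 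Both are valid.
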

\begin{proof}
Start by noting that since the cylinders in the process are open sets, 
the sets $\CV_n^k$ are compact for all $n$.
Moreover, we have that $\CV_n^k
\supset \CV_{n+1}^k$ for every $n$. 
Therefore we conclude that if $m_n^k \neq \emptyset$ for infinitely 
many $n\geq 1$, then $\CV_n^k \neq \emptyset$ 
for every $n\geq 1$. It then follows by compactness that
\[
\CV^k= \bigcap_{n=1}^\infty \CV_n^k \neq \emptyset.
\]
Thus, if we prove that for any $\lambda <k$, 
\begin{equation}\label{eq:mnio}
\BP(|m_n^k| > 0 \mbox{ infinitely often}) \geq c >0,
\end{equation} 
where $c$ depends only on $\lambda$ and $d$, we would get that 
$\CV^k \neq \emptyset$ with positive probability, as desired 
(here $|\cdot|$ stands for cardinality).

Observe that by the reverse Fatou's Lemma,
\[
\BP(|m_n^k| > 0 \mbox{ infinitely often}) 
\geq \limsup \BP(|m_n^k| > 0),
\]
and so it suffices to show that there exists $c = c(\lambda) >0$ such that
\[
\BP(|m_n^k| > 0)\geq 
\frac{\BE(|m_n^k|)^2}{\BE\left(|m_n^k|^2\right)}\geq c
\]
uniformly in $n$ (where we used the second moment method for the first inequality).

It follows from part $a)$ of Proposition \ref{prop:list} that 
\begin{equation}\label{eq:secondmoment}
\BE(|m_n^k|) = 2^{nk} \BP([0,2^{-n}]^d \in m_n^k)  
\geq e^{-\lambda C_1} 2^{(k-\lambda)n}.
\end{equation}
We proceed to provide an upper bound to $\BE(|m_n^k|^2).$ 
We have that
\[
\BE(|m_n^k|^2) = \sum_{X_1,X_2 \in \chi_n^k} \BP(X_1, X_2 \in m_n^k).
\]
We split the sum into two parts:
\begin{eqnarray}\label{eq:sumsplit}
\lefteqn{\sum_{X_1,X_2 \in \chi_n^k} \BP(X_1, X_2 \in m_n^k)} \\
&&= \sum_{\substack{X_1,X_2 \in \chi_n^k \\ \dist{p_1}{p_2} \leq 2^{-n+2}}} 
\BP(X_1, X_2 \in m_n^k)
+\sum_{\substack{X_1,X_2 \in \chi_n^k \\ \dist{p_1}{p_2} > 2^{-n+2}}} 
\BP(X_1, X_2 \in m_n^k),\nonumber
\end{eqnarray}
where $p_i=p_i(X_i)$ is the center point of the box $X_i$. 
This split will be necessary because the techniques used to bound 
$\BP(X_1, X_2 \in m_n^k)$ depends on the distance between the boxes.
In order to provide an upper bound to $\BP(X_1, X_2 \in m_n^k)$ 
we only need to provide an upper bound to the probability that the 
center points $p_1, p_2$ of the boxes $X_1,X_2$ are not hit by any 
cylinders in $\omega_n$. In fact, any pair of points 
$(p_i,p_j) \in X_1\times X_2$ would do.

We will begin by providing a bound to the first sum on 
the right hand side of \eqref{eq:sumsplit}.
Note that
\[
\sum_{\substack{X_1,X_2 \in \chi_n^k \\ \dist{p_1}{p_2} 
\leq 2^{-n+2}}} \BP(X_1, X_2 \in m_n^k)
\leq \sum_{X_1 \in \chi_n^k} \sum_{\substack{X_2 \in \chi_n^k \\ 
\dist{p_1}{p_2} \leq 2^{-n+2}}} \BP(X_2 \in m_n^k).
\]
For a fixed $X_1$ (and therefore fixed $p_1$), there are at most
$9^k$ different $X_2\in \chi_n^k$ (or centres $p_2$) such that 
$\dist{p_1}{p_2}\leq 2^{-n+2}$ (there can be fewer if for instance 
$X_1=[0,2^{-n}]^k \times \{0\}^{d-k}$). 
Therefore we can use Proposition \ref{prop:list} part $b)$ to see that
\[
\sum_{\substack{X_2 \in \chi_n^k \\ 
\dist{p_1}{p_2} \leq 2^{-n+2}}}\BP(X_2 \in m_n^k) 
\leq 9^k \BP(X_2 \in m_n^k) \leq 9^k \BP(p_2 \in \CV_n) 
= 9^k 2^{-\lambda n}.
\]
Hence,
\begin{equation}\label{eq:sumnearboxes}
\sum_{\substack{X_1,X_2 \in \chi_n^k \\ \dist{p_1}{p_2} \leq 2^{-n+2}}} 
\BP(X_1, X_2 \in m_n^k)
\leq 9^k 2^{(k-\lambda)n},
\end{equation}
since we have $2^{kn}$ possible choices for $X_1$.

We now turn our attention to the second sum in the right hand side 
of \eqref{eq:sumsplit}. Observe therefore that for fixed $X_1$ 
we have that 
\[
\sum_{\substack{X_2 \in \chi_n^k \\ \dist{p_1}{p_2} > 2^{-n+2}}} 
\BP(X_1, X_2 \in m_n^k)
\leq \sum_{l=2}^{n-1} \sum_{\substack{X_2 \in \chi_n^k \\ 
2^l 2^{-n} < \dist{p_1}{p_2} \leq 2^{l+1}2^{-n}}}
\BP(p_1, p_2 \in \CV_n),
\]
and that the number of $X_2\in \chi_n^k$ satisfying 
$2^l 2^{-n} < \dist{p_1}{p_2} \leq 2^{l+1}2^{-n}$ can be bounded by 
\[
(2\cdot 2^{l+1}+1)^k-(2\cdot 2^{l}+1)^k
\leq 3^k 2^{kl+k}\leq 9^k 2^{kl}.
\]
Therefore, we can use part $c)$ of Proposition \ref{prop:list} 
to conclude that
\begin{eqnarray} \label{eqn:sumdistboxes}
\lefteqn{\sum_{\substack{X_1,X_2 \in \chi_n^k \\ 
\dist{p_1}{p_2} \geq 2^{-n+2}}} \BP(X_1, X_2 \in m_n^k)
 \leq 2^{kn} \sum_{l=2}^{n-1} \sum_{\substack{X_2 \in \chi_n^k \\ 
2^l 2^{-n} < \dist{p_1}{p_2} \leq 2^{l+1}2^{-n}}}\BP(p_1, p_2 \in \CV_n)} \\
& & \leq 2^{kn} \sum_{l=2}^{n-1} 9^k 2^{kl} 
e^{\lambda C_2} 2^{-2\lambda n }(2^l 2^{-n})^{-\lambda} 
= e^{\lambda C_2}9^k 2^{(k-\lambda)n} \sum_{l=2}^{n-1} (2^{k-\lambda})^l
\leq \frac{e^{\lambda C_2}9^k}{2^{k-\lambda}-1} 2^{2(k-\lambda)n}. \nonumber
\end{eqnarray}
Using \eqref{eq:sumsplit}, \eqref{eq:sumnearboxes} and 
\eqref{eqn:sumdistboxes} we get that 
\[
\BE(|m_n^k|^2) = \sum_{X_1,X_2 \in \chi_n^k} \BP(X_1, X_2 \in m_n^k ) 
\leq 9^k 2^{(k-\lambda)n} 
+\frac{e^{\lambda C_2}9^k}{2^{k-\lambda}-1} 2^{2(k-\lambda)n}.
\]
We combine this with \eqref{eq:secondmoment} to conclude that 
\begin{eqnarray*}
\lefteqn{\BP(|m_n^k| > 0) 
\geq \frac{\BE(|m_n^k|)^2}{\BE\left(|m_n^k|^2\right)}} \\
&& \geq \frac{ e^{-2\lambda C_1} 2^{2(k-\lambda)n}}{9^k 2^{(k-\lambda)n} 
+\frac{e^{\lambda C_2}9^k}{2^{k-\lambda}-1} 2^{2(k-\lambda)n}}
=\frac{ e^{-2\lambda C_1}(2^{k-\lambda}-1)}
{9^k 2^{-(k-\lambda)n}(2^{k-\lambda}-1)+e^{\lambda C_2}9^k}.
\end{eqnarray*}
Since $\lambda<k$ we see that $\BP(|m_n^k| > 0)$ is uniformly 
bounded away from zero.
\end{proof}

We will now prove that $\lambda_e(d,k) \leq k$ and that we are in the 
empty phase at the critical point $\lambda_e = k$. For that, we will 
need some additional results. Let $D_n \subset \chi_n^d$ be a minimal 
covering of $ \CV_n^d$, that is, $\CV_n^d \subset \cup_{X\in D_n}X$ 
and let $D_n^k = D_n \cap \chi_n^k$. The choice of $D_n$ is not necessarily 
unique since a point belonging to the boundary of a box $X \in \chi_n$
can be covered by more than one box. We assume therefore that $D_n$
is picked according to some predetermined rule. 
Note also that for any $k$ we must have that 
$\CV_n^k \subset \cup_{X\in D^k_n}X.$ Obviously, $D_n^k$ depends on the configuration $\omega_n$, and therefore we sometimes emphasise this by writing $D_n^k(\omega_n)$ or similar.

The next lemma relates the event $\CV^k \neq \emptyset$ with the 
limiting behaviour of $|D^k_n|.$ The lemma is similar to Lemma $3.2$ in \cite{BJT_2017} and Lemma $2.1$ in \cite{B_2018}, but the proof provided here is more detailed. 

\begin{lemma}\label{lem:afterMn}
For any $\lambda > 0$ and $1 \leq k \leq d$ we have that
\[
\BP(\{\CV^k \neq \emptyset\} 
\setminus \{\lim_{n\to \infty} |D_n^k| = \infty\}) = 0.
\]
\end{lemma}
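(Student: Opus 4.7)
The plan is to combine a Harris--FKG-based conditional lower bound on $\BP(\CV^k=\emptyset\mid\omega_n)$ with L\'evy's upward martingale theorem. Let $q:=\BP(\CV^k\neq\emptyset)$; if $q=0$ there is nothing to show, so assume $q>0$. I first argue that $q<1$ for every $\lambda>0$: one can exhibit an explicit finite configuration of cylinders whose union covers $[0,1]^k\times\{0\}^{d-k}$ (for $k\le4$ a single cylinder orthogonal to $H_k$ with radius in $(\sqrt{k}/2,1]$ suffices, while for larger $k$ one uses a finite tiling into sub-cubes each coverable by one such cylinder), and such a configuration occurs with positive $\BP$-probability, giving $\BP(\CV^k=\emptyset)>0$.

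Next, set $\widetilde\CV_n:=\BR^d\setminus\bigcup_{(L,r)\in\omega\setminus\omega_n}\Fc(L,r)$, so that $\CV=\CV_n\cap\widetilde\CV_n$ and $\widetilde\CV_n$ is independent of $\omega_n$. The key geometric input is the inclusion
\[
\CV^k\subset \bigcup_{X\in D_n^k}\bigl(X\cap H_k\cap\widetilde\CV_n\bigr),
\]
which follows from $\CV^k\subset\CV_n^k\subset\bigcup_{X\in D_n^k}X$ together with $\CV\subset\widetilde\CV_n$. The semi-scale invariance of $\nu_d\times\varrho_s$ (verified right after \eqref{eqn:scaleinvariantmeasure}) shows that rescaling any $X\in\chi_n^k$ by $2^n$ and translating sends $\omega\setminus\omega_n$ to a Poisson process with the law of $\omega$; together with independence from $\omega_n$ this yields
\[
\BP\bigl(X\cap H_k\cap\widetilde\CV_n\neq\emptyset\mid\omega_n\bigr)=q\qquad\text{for every }X\in\chi_n^k.
\]
Since each event $\{X\cap H_k\cap\widetilde\CV_n=\emptyset\}$ is increasing in $\omega\setminus\omega_n$, the Harris--FKG inequality applied conditionally on $\omega_n$ (which makes $D_n^k$ measurable) yields
\[
\BP(\CV^k=\emptyset\mid\omega_n)\ge\prod_{X\in D_n^k}\BP\bigl(X\cap H_k\cap\widetilde\CV_n=\emptyset\mid\omega_n\bigr)=(1-q)^{|D_n^k|}.
\]

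To finish, $\sigma(\omega_n)\uparrow\sigma(\omega)$ and L\'evy's upward theorem give $\BP(\CV^k\neq\emptyset\mid\omega_n)\to\mathbf{1}_{\{\CV^k\neq\emptyset\}}$ almost surely; on $\{\CV^k\neq\emptyset\}$ this limit equals $1$, hence $(1-q)^{|D_n^k|}\to 0$, and since $q<1$ this forces $|D_n^k|\to\infty$ almost surely on this event, proving the lemma. The main obstacle is the preliminary step $q<1$: without it the exponential FKG bound is vacuous and the whole argument collapses, so the coverage-by-finitely-many-cylinders construction is essential. A secondary care-point is verifying that the per-box probability is exactly $q$ uniformly in $X\in\chi_n^k$ and independent of $\omega_n$, which rests on translation invariance combined with the semi-scale identity $\nu_d\times\varrho_s(\Line_{\epsilon K}\times(\epsilon b,\epsilon c])=\nu_d\times\varrho_s(\Line_K\times(b,c])$ of the paper.
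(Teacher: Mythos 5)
Your proof is correct and takes a genuinely different route from the paper's. The paper builds $\omega$ one level at a time, fixes a bound $L$, and at any stage $n$ with $0 < |D_n^k| \leq L$ uses a one-step FKG estimate (plus scale invariance) to get a kill probability $\geq \alpha^L$ with $\alpha = \BP(\CV_1^k=\emptyset)>0$; an explicit renewal/coupling argument then shows such stages happen only finitely often, so $|D_n^k|\to 0$ or $\infty$. You instead apply FKG to the \emph{entire} tail $\omega\setminus\omega_n$ to get the clean conditional bound $\BP(\CV^k=\emptyset\mid\omega_n)\ge(1-q)^{|D_n^k|}$ with $q=\BP(\CV^k\neq\emptyset)$, and then let L\'evy's upward martingale theorem do all the bookkeeping: on $\{\CV^k\neq\emptyset\}$ the left side tends to $0$, forcing $|D_n^k|\to\infty$. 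This is a more conceptual packaging that avoids the renewal construction entirely; the price is needing the global conditional statement $\BP(X\cap H_k\cap\widetilde\CV_n\neq\emptyset\mid\omega_n)=q$ for all $X\in\chi_n^k$, which you correctly derive from semi-scale invariance and independence. Both approaches hinge on the same geometric positivity: you need $q<1$, the paper needs $\alpha>0$ (noting $1-q\geq\alpha$ since $\CV\subset\CV_1$); the paper asserts this without proof, whereas you spell out the finitely-many-cylinders covering argument. One small slip there: for $k=4$, $\sqrt{4}/2=1$ so $(\sqrt{k}/2,1]$ is empty and a single orthogonal cylinder does not suffice; the single-cylinder case should read $k\le 3$, with the finite-tiling argument handling $k\ge4$ (which you already invoke for ``larger $k$'').
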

\begin{proof} We will construct a sequence $(\eta_n)_{n \geq 1}$ in 
a specific way so that it has the same distribution as 
$(\omega_n)_{n \geq 1}$. The statement then follows from the 
details of this construction.

Fix $L<\infty,$ and let $\CA_n$ be the 
event that $0<|D_{n}^k| \leq L.$ Note that there exists 
$\alpha = \alpha(\lambda)$ such that 
$\BP(\CV^k_1  = \emptyset)= \alpha > 0$.

The main idea is to prove that for any such $L$ the events $\CA_n$ can only occur finitely many times. Intuitively, this is clear since every time $\CA_n$ occurs, there is a uniform positive probability to "kill" the process.

Let $\eta_{1}$ be a Poisson process with intensity measure
$\lambda \nu_d \times 1\{2^{-1}<r\leq 1\} r^{-d}\id r$,
and observe that $\eta_1$ has the same distribution as $\omega_1.$
Then, we let $\eta_{1,2}$ be a Poisson process with intensity measure
$\lambda \nu_d \times 1\{2^{-2}<r\leq 2^{-1}\} r^{-d}\id r$,
and we pick this independent of $\eta_{1}.$ 
Observe that $\eta_2:=\eta_1+\eta_{1,2}$ has the same distribution 
as $\omega_2.$ If the event $\CA_n$ never occurs, we let 
$X_1,X_2,\ldots\in \{0,1\}$ be an i.i.d. sequence of random variables
such that $\BP(X_i=0)=\alpha^L$ for $i\in \{1,2,\ldots\}.$
Otherwise, we continue the procedure of constructing
$\eta_n=\eta_{n-1}+\eta_{n-1,n}$ until the first 
time $n_1$ such that $\CA_{n}$ occurs. 

Assume therefore that $\CA_{n_1}$ occurs.
As before, let $\eta_{n_1, n_1+1}$ be a Poisson process with intensity 
measure $\lambda \nu_d \times 1\{2^{-n_1-1}<r\leq 2^{-n_1}\} r^{-d}\id r$, and let 
\[
X_1=\indicator (\CV_{n_1+1}^k(\eta_{n_1}+\eta_{n_1,n_1+1})\neq\emptyset).
\]




We note that for any $\eta_{n_1}\in \CA_{n_1}$ we have that 
\begin{eqnarray*}
\lefteqn{\BP(\CV_{n_1+1}^k(\eta_{n_1}+\eta_{n_1,n_1+1})=\emptyset | \eta_{n_1})
\geq \BP(\CV_{n_1,n_1+1}^k(\eta_{n_1,n_1+1}) \cap D_{n_1}^k(\eta_{n_1})
=\emptyset | \eta_{n_1})}\\
& & \geq \prod_{X\in D_{n_1}^k} \BP(\CV_{n_1,n_1+1}^k(\eta_{n_1,n_1+1}) \cap X
=\emptyset)
=\BP(\CV_1^k=\emptyset)^{|D_{n_1}^k|}
=\alpha^{|D_{n_1}^k|}\geq \alpha^L,
\end{eqnarray*}
and so $\BP(X_1=0)\geq \alpha^L.$
Here, the second inequality follows by the FKG inequality for 
Poisson processes, while the second to last equality follows by 
scale invariance.

We then proceed as follows. If $\CA_n$ never occurs again, 
we let $X_2,X_3,\ldots\in\{0,1\}$ be an i.i.d. sequence 
such that $\BP(X_i=0)=\alpha^L$ for $i\in\{2,3\ldots\}.$
Otherwise, 
we proceed with 
the construction until time $n_2,$ which is the first time after $n_1$
such that $\CA_{n}$ occurs. Then, we construct $X_2$ analogously 
to how we constructed $X_1$ above. Note that the dependence between 
$X_1$ and $X_2$ is potentially complicated, but that for every 
$\eta_{n_2} \in \CA_{n_2}$ we must have (as above) that 
$\BP(\CV_{n_2+1}^k(\eta_{n_2}+\eta_{n_2,n_2+1})=\emptyset | \eta_{n_2})
\geq \alpha^L$. Therefore, 
\[
\BP(X_1=1,X_2=1)=\BE[\BP(X_2=1 | X_1=1 ,\eta_{n_2})]
\BP(X_1=1) \leq (1-\alpha^L)^2.
\]
Proceeding in this manner we see that 
\[
\BP(0<|D_n^k| \leq L \text{ i.o.})
=\lim_{l \to \infty} \BP(X_1=\cdots=X_l=1)
\leq \lim_{l \to \infty} (1-\alpha^L)^l=0.
\]
This implies that
$\BP(\lim_{n \to \infty}|D_n^k| \in \{0,\infty\} ) =1.$ 
Furthermore if $\CV^k \neq \emptyset,$ then $\CV_n^k \neq \emptyset$ 
for all $n$, which implies that $|D_n^k| \neq 0$ for all $n$ and so 
we conclude that 
$\BP(\lim_{n \to \infty}|D_n^k| = \infty| \CV^k \neq \emptyset ) =1.$
\end{proof}

\begin{proposition}\label{prop:emptylowerbound}
For $d \geq 2$ and $1 \leq k \leq d$ we have that $\lambda_e(d,k) \leq k$. 
Furthermore, 
\[
\BP_{\lambda_e(d,k)}(\CV = \emptyset) =1.
\]
\end{proposition}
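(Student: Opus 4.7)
The plan is to bound $|D_n^k|$ via a first moment estimate using Proposition~\ref{prop:list}(d), and then to use Lemma~\ref{lem:afterMn} to handle the critical value. The key preliminary observation is that $D_n^k \subseteq M_n^k$: indeed, any $X \in D_n^k$ meets $\CV_n^k \subseteq \CV_n$ by minimality of the covering, hence contains a point not covered by any cylinder of $\omega_n$, so $X$ cannot be contained in any single such cylinder. Since $|\chi_n^k|=2^{kn}$, Proposition~\ref{prop:list}(d) yields, for all $n$ with $2^{-n}\sqrt d \le 1$,
\[
\BE|D_n^k| \;\le\; \BE|M_n^k| \;\le\; 2^{kn}\, e^{\lambda C_3}\, 2^{-\lambda n} \;=\; e^{\lambda C_3}\, 2^{(k-\lambda)n}.
\]

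Fix any $\lambda > k$. Then $\sum_{n\ge 1}\BP(|M_n^k|\ge 1)\le \sum_n \BE|M_n^k| < \infty$, so Borel--Cantelli gives $|M_n^k|=0$ eventually a.s.; in particular $|D_n^k|=0$ eventually a.s. Since any $y\in \CV^k$ satisfies $y\in \CV_n^d$ with $y_{k+1}=\cdots=y_d=0$, the unique box of $\chi_n^d$ containing $y$ automatically lies in $\chi_n^k$ and thus, by minimality, in $D_n^k$; hence $\CV^k \subseteq \bigcup_{X\in D_n^k} X$. Therefore $\CV^k = \emptyset$ a.s., so $\lambda_e(d,k)\le \lambda$ for every $\lambda>k$, which gives $\lambda_e(d,k)\le k$.

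For the critical value, combining with Proposition~\ref{prop:emptyupperbound} yields $\lambda_e(d,k)=k$, so we must show $\BP_k(\CV\cap H_k \neq \emptyset)=0$. The bound above degenerates to $\BE|D_n^k|\le e^{kC_3}$, a bounded quantity that need not vanish, so a direct Borel--Cantelli is unavailable. Instead, Fatou's lemma gives
\[
\BE\!\left[\liminf_{n\to\infty}|D_n^k|\right] \;\le\; \liminf_{n\to\infty} \BE|D_n^k| \;\le\; e^{kC_3} \;<\; \infty,
\]
so $\liminf_n |D_n^k| < \infty$ almost surely, whence $\BP_k(\lim_n|D_n^k| = \infty)=0$. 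Together with Lemma~\ref{lem:afterMn}, which asserts $\BP_k(\CV^k\neq\emptyset,\;\lim_n|D_n^k|<\infty)=0$, this forces $\BP_k(\CV^k\neq\emptyset)=0$.

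The main subtlety is the critical case: the first-moment bound from Proposition~\ref{prop:list}(d) is sharp up to constants, and one cannot squeeze extra decay out of it. The role of Lemma~\ref{lem:afterMn} is precisely to convert the dichotomy ``either $|D_n^k|\to\infty$ or $\CV\cap H_k$ is already empty'' into the desired conclusion from merely bounded (rather than vanishing) expectations.
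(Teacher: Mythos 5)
Your proof is correct and follows essentially the same route as the paper: the inclusion $D_n^k \subseteq M_n^k$, the first-moment bound from Proposition~\ref{prop:list}(d), and Lemma~\ref{lem:afterMn} to convert boundedness of $\BE|D_n^k|$ into emptiness of $\CV^k$ at $\lambda=k$. The split into a Borel--Cantelli argument for $\lambda>k$ and a Fatou argument for $\lambda=k$ is unnecessary: your Fatou step alone handles both cases and is exactly the content of the paper's dichotomy ``either $\lambda<k$ or $\BP_\lambda(\CV^k\neq\emptyset)=0$.'' One small imprecision in the wording: minimality of the covering $D_n$ of $\CV_n^d$ only guarantees that each $X\in D_n^k$ meets $\CV_n^d$, not $\CV_n^k$ (the box might be retained solely to cover a vacant point off $H_k$); since either set is contained in $\CV_n$, the inclusion $D_n^k\subseteq M_n^k$ still follows and the argument is unaffected.
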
 

\begin{proof}
Observe that if $X \in D_n^k$, then $X$ is not covered by a single cylinder 
in the Poisson process and so $|D_n^k| \leq |M_n^k|.$ 
Therefore, if $\lambda$ is such that $\BP_{\lambda}(\CV^k \neq \emptyset) > 0$ 
we see that by Lemma \ref{lem:afterMn} we have that 
$\BP_{\lambda}(\lim_{n\to\infty} |D_n^k| = \infty)> 0$ and so
\begin{equation}\label{eq:boundMn}
\lim_{n\to \infty}\BE_{\lambda}(|M_n^k|) \geq \lim_{n\to \infty}\BE_{\lambda}(|D_n^k|) = \infty.
\end{equation}

We will now provide an upper bound to $\BE_{\lambda}(|M_n^k|)$. From 
Proposition \ref{prop:list} part $d)$ we have that
\[
\BP_{\lambda}(X \in M_n^k) 
\leq e^{\lambda C_3} 2^{-\lambda n}
\]
and so
\begin{equation}\label{eq:evMn}
\BE_{\lambda}(|M_n^k|) \leq 2^{kn}\BP_\lambda(X \in M_n^k) \leq e^{\lambda C_3} 2^{(k-\lambda)n}.
\end{equation}
The discussion above Equation \eqref{eq:boundMn} implies that
\[
\lim_{n \to \infty} e^{\lambda C_3} 2^{(k-\lambda)n} 
\geq \lim_{n \to \infty} \BE_{\lambda}(|M_n^k|) = \infty,
\]
whenever $\BP_{\lambda}(\CV^k\neq \emptyset) >0$.
Clearly, this only holds if $k > \lambda$. We conclude that 
$\lambda_e(d,k) \leq k$ and that for $\lambda=k$ we must have that 
$\BP_{\lambda}(\CV^k \neq \emptyset)=0$.
\end{proof}

\begin{proof}[Proof of Theorem \ref{thm:lambdae}]
This follows from Propositions \ref{prop:emptyupperbound} and 
\ref{prop:emptylowerbound}.
\end{proof}

\section{The Hausdorff dimension of $\CV$} \label{sec:HaussofV}

In this section we will prove Theorem \ref{thm:Hausdorff} which 
establishes the almost sure Hausdorff dimension of $\CV\cap H_k.$ 
We will prove Theorem \ref{thm:Hausdorff} by establishing the upper 
and lower bound separately. As usual, it is easier to determine the 
upper bound, so we will do this first. Recall that the definitions of Hausdorff measure and fractal dimensions are found in Section~\ref{sec:dimensions}.
\begin{theorem}\label{thm:Vkuppdim}
For $\lambda<k,$ we have that $\dim_\CH(\CV \cap H_k) \leq k-\lambda$ 
almost surely.
\end{theorem}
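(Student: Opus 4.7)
The plan is to use the random collection $D_n^k$ itself as a cover of $\CV\cap H_k$ and to upper bound its expected Hausdorff $s$-mass for every $s > k-\lambda$. By stationarity of the fractal cylinder process and countable subadditivity of $\dim_{\CH}$ under countable unions, it suffices to prove $\CH^s(\CV^k)=0$ almost surely for every rational $s>k-\lambda$: the subspace $H_k$ is tiled by countably many unit translates of $[0,1]^k\times\{0\}^{d-k}$, each of which has the same law for $\CV\cap \cdot$, so the conclusion for $\CV\cap H_k$ follows from the corresponding statement for $\CV^k$ on each tile together with a countable union over a sequence of rationals decreasing to $k-\lambda$.

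The key structural input is the inclusion $D_n^k \subset M_n^k$. Minimality of $D_n$ as a cover of $\CV_n^d$ forces every $X \in D_n$ to meet $\CV_n^d$, so $X$ cannot be contained in any single cylinder; and any box in $D_n$ containing a point of $H_k$ automatically has its last $d-k$ base coordinates equal to zero, so it lies in $\chi_n^k$. Consequently $D_n^k$ covers $\CV^k \subset \CV_n^k$ by sets of diameter $2^{-n}\sqrt{d}$, and combining this with Proposition~\ref{prop:list}(d) gives
\[
\BE\bigl(\CH^s_{2^{-n}\sqrt{d}}(\CV^k)\bigr)\;\le\;\BE(|D_n^k|)\,(2^{-n}\sqrt{d})^s\;\le\;2^{kn}\cdot e^{\lambda C_3}2^{-\lambda n}\cdot d^{s/2} 2^{-ns}\;=\;e^{\lambda C_3}d^{s/2}\,2^{(k-\lambda-s)n}.
\]

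For $s>k-\lambda$ this geometric bound is summable in $n$, so by Fubini the random series $\sum_n |D_n^k|(2^{-n}\sqrt{d})^s$ is almost surely finite; in particular its $n$-th term tends to zero almost surely. Sending $n\to\infty$ through the scales $\delta = 2^{-n}\sqrt{d}\downarrow 0$ and using monotonicity of $\CH^s_\delta$ in $\delta$ yields $\CH^s(\CV^k)=0$ almost surely, which combined with the reduction in the first paragraph gives $\dim_{\CH}(\CV\cap H_k)\le k-\lambda$ almost surely. I do not anticipate a genuine obstacle in this direction: once the covering inclusion $D_n^k\subset M_n^k$ is in hand, everything reduces to a routine first-moment / Borel--Cantelli style computation that is already powered by the estimate on $|M_n^k|$ established in Proposition~\ref{prop:list}. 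The real work lies in the matching lower bound $\dim_{\CH}(\CV\cap H_k)\ge k-\lambda$, for which one presumably needs a Frostman-type potential-theoretic argument applied to a well-chosen random measure supported on $\CV\cap H_k$ (with energy estimates coming out of Proposition~\ref{prop:list}(c)).
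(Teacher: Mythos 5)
Your proposal is correct and is essentially the paper's own proof: cover $\CV^k$ by the boxes in $M_n^k$, use Proposition~\ref{prop:list}(d) to get $\BE|M_n^k|\le e^{\lambda C_3}2^{(k-\lambda)n}$, conclude $\CH^s(\CV^k)=0$ a.s.\ for $s>k-\lambda$, then tile $H_k$ and take a countable union. The only (cosmetic) difference is that the paper uses $M_n^k$ directly as the covering family, so the detour through $D_n^k$ and the inclusion $D_n^k\subset M_n^k$ is not needed; your Fubini/term-to-zero wrap-up and the paper's expectation-to-zero wrap-up are interchangeable (and your version is in fact a bit more careful about the distinction between $\CH^s_\delta$ and $\CH^s$).
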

\begin{proof}
We will prove that $\dim_\CH(\CV^k) \leq k-\lambda$ almost surely.
The statement then follows by tiling $H_k$ with copies of 
$[0,1]^k \times \{0\}^{d-k}$ and using a countability argument.

Observe that the set $M_n^k$ defined before Proposition \ref{prop:list} 
is a $\sqrt{k}2^{-n}$-cover of $\CV^k$. Therefore 
$\CH^s(\CV^k) \leq \sum_{X \in M_n^k}{\mathrm{diam}}(X)^s 
= \left|M_n^k\right|(\sqrt{k}2^{-n})^s.$
Then, by~\eqref{eq:evMn} we have that 
$\BE[\CH^s(\CV^k)] \leq (\sqrt{k}2^{-n})^s\BE\left(\left|M_n^k\right|\right) 
\leq e^{\lambda C_3}(\sqrt{k})^s2^{(k-\lambda -s)n}$ for all $n$. 
Thus if $k-\lambda -s<0$ then $\BE[\CH^s(\CV^k)]= 0$ and since 
$\CH^s(\CV^k) \geq 0$, we must have that almost surely  $\CH^s(\CV^k)= 0.$
By the definition of Hausdorff 
dimension, we conclude that $\dim_\CH(\CV^k) \leq k-\lambda$ almost surely.
\end{proof}
\noindent
\begin{remark}\label{rem:vkuppdim}
We note that it does not follow from (an extension of) 
Theorem \ref{thm:Vkuppdim} that $\lambda_e \leq k$. The reason for 
this is that for $\lambda>k,$ the set 
$\CV \cap H_k$ could be non-empty while still having Hausdorff dimension 0.
However, it is the case that our next result, Theorem \ref{thm:Vklowdim},
implies Proposition \ref{prop:emptyupperbound}. The reason for
providing the proof of Proposition \ref{prop:emptyupperbound} is that 
it is done from first principles while the proof of Theorem 
\ref{thm:Vklowdim} is much more involved. 
\end{remark}


The next step is to find a lower bound on the Hausdorff dimension 
of $\CV\cap H_k$. We will do this by establishing that for any $\lambda<k,$
$\dim_\CH(\CV^k)\geq k-\lambda$ with positive probability, and
then use Lemma \ref{lem:zeroone} to conclude that
$\dim_\CH(\CV \cap H_k) \geq k-\lambda$ almost surely. The aim of the 
rest of this section is to prove the following theorem.

\begin{theorem} \label{thm:Vklowdim}
For any $\lambda<k$ we have that 
\[
\BP(\dim_\CH(\CV^k)\geq k-\lambda)>0.
\]
\end{theorem}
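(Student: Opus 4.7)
The plan is to use the classical energy method (Frostman). I will construct a natural random measure $\mu_n$ supported on $\CV_n^k$, show that with positive probability its total mass stays bounded away from zero while its $s$-energy stays finite for every $s<k-\lambda$, extract a weak limit supported on $\CV^k$, and apply the Frostman energy criterion to conclude $\dim_\CH(\CV^k)\ge s$ on this event.

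Concretely, define
\[
\mu_n(A):=2^{\lambda n}\,\ell_k(A\cap \CV_n^k),\qquad A\subseteq [0,1]^k\times\{0\}^{d-k}.
\]
By Fubini and Proposition~\ref{prop:list}(b),
\[
\BE[\mu_n([0,1]^k)]=2^{\lambda n}\int_{[0,1]^k}\BP(x\in \CV_n)\,\id\ell_k(x)=1,
\]
and by Proposition~\ref{prop:list}(c),
\[
\BE[\mu_n([0,1]^k)^2]
\le 2^{2\lambda n}\!\int\!\!\int_{[0,1]^k\times[0,1]^k}\!\! e^{\lambda C_2}2^{-2\lambda n}\|x-y\|^{-\lambda}\,\id\ell_k(x)\id\ell_k(y)
=e^{\lambda C_2}\kappa_{\lambda,k},
\]
where $\kappa_{\lambda,k}:=\int\!\!\int_{[0,1]^k\times[0,1]^k}\|x-y\|^{-\lambda}\,\id x\,\id y<\infty$ since $\lambda<k$. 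The Paley–Zygmund inequality then gives a constant $c=c(\lambda,d,k)>0$, uniform in $n$, such that
\[
\BP\!\bigl(\mu_n([0,1]^k)\ge 1/2\bigr)\ge c.
\]

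Next, for any $s<k-\lambda$, the $s$-energy $I_s(\mu_n):=\int\!\!\int \|x-y\|^{-s}\,\id\mu_n(x)\id\mu_n(y)$ satisfies, again by Proposition~\ref{prop:list}(c),
\[
\BE[I_s(\mu_n)]\le e^{\lambda C_2}\!\int\!\!\int_{[0,1]^k\times[0,1]^k}\!\|x-y\|^{-(s+\lambda)}\,\id x\,\id y=:K_{s,\lambda}<\infty,
\]
because $s+\lambda<k$. By Markov's inequality we can choose $M=M(s,\lambda,d,k)$ with $\BP(I_s(\mu_n)>M)\le c/2$. Hence
\[
\BP\!\bigl(\mu_n([0,1]^k)\ge 1/2\ \text{ and }\ I_s(\mu_n)\le M\bigr)\ge c/2
\]
for every $n$.

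Finally, I pass to the limit. The measures $\mu_n$ live on the compact set $[0,1]^k\times\{0\}^{d-k}$ with uniformly bounded total mass, so by Prokhorov's theorem there is a (random) subsequence $n_j\to\infty$ along which $\mu_{n_j}$ converges weakly to a measure $\mu$. Since $\supp(\mu_n)\subseteq \CV_n^k$ and the sequence $\CV_n^k$ is decreasing with compact intersection $\CV^k=\bigcap_n\CV_n^k$, the limit $\mu$ is supported on $\CV^k$. The event $\{\mu_n([0,1]^k)\ge 1/2,\,I_s(\mu_n)\le M\}$ occurring infinitely often has probability at least $c/2$ by reverse Fatou, and on this event one obtains $\mu([0,1]^k)\ge 1/2$ and (by Fatou applied to $(x,y)\mapsto \|x-y\|^{-s}\wedge N$ followed by $N\to\infty$) $I_s(\mu)\le M<\infty$. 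Frostman's energy criterion then yields $\dim_\CH(\CV^k)\ge s$ on this event, and since $s<k-\lambda$ was arbitrary, $\BP(\dim_\CH(\CV^k)\ge k-\lambda)\ge c/2>0$.

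The delicate step is justifying that the $s$-energy is lower semicontinuous under the weak convergence $\mu_{n_j}\to\mu$ (with a positive lower bound on the limiting mass), which is the standard truncation argument just sketched; everything else reduces to the two-point estimates already provided by Proposition~\ref{prop:list}.
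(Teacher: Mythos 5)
Your proof is correct and rests on the same Frostman/energy strategy as the paper (same candidate measures $\mu_n$ with density $2^{\lambda n}\indicator(\CV_n^k)$, same use of Proposition~\ref{prop:list} parts b) and c) for the first and second moment estimates, same energy/lower-semicontinuity truncation). Where you diverge is in how you produce the limiting measure. The paper establishes a martingale structure for $X_n(f)=\int f\,\id\zeta_n$ (Lemma~\ref{lem:munmartingale}), uses martingale convergence plus separability of $C([0,1]^k)$ and the Riesz representation theorem (Lemmas~\ref{lem:separable}, \ref{lem:muexists}, Proposition~\ref{prop:weakconv}) to get almost sure weak convergence of the \emph{entire} sequence $\zeta_n\to\zeta$, and then proves $\BE[\|\zeta\|_{TV}]=1$ via $L^2$-boundedness of the martingale. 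You instead combine Paley--Zygmund with a Markov bound on the energy to get a uniform-in-$n$ lower bound on $\BP(\mu_n([0,1]^k)\ge 1/2,\,I_s(\mu_n)\le M)$, then extract a convergent subsequence via Prokhorov on the good event and apply Frostman along the subsequence. Your route bypasses the martingale and Riesz machinery entirely; the trade-off is that you only obtain subsequential weak convergence rather than convergence of the full sequence, which the paper exploits further in Proposition~\ref{prop:explicitlowerbound}. One small point worth making explicit: to invoke Prokhorov (or rather sequential compactness of finite measures on the compact set $[0,1]^k\times\{0\}^{d-k}$) you need $\sup_j\mu_{n_j}([0,1]^k)<\infty$ along the selected subsequence. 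This does follow from the energy bound, since
\[
M\ge I_s(\mu_{n_j})\ge \bigl(\diam [0,1]^k\bigr)^{-s}\,\mu_{n_j}([0,1]^k)^2=k^{-s/2}\,\mu_{n_j}([0,1]^k)^2,
\]
so $\mu_{n_j}([0,1]^k)\le k^{s/4}M^{1/2}$; alternatively you could simply add a third Markov event $\{\mu_n([0,1]^k)\le L\}$ to the intersection at negligible cost in probability. With that clarification, the argument is complete.
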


The proof of Theorem \ref{thm:Vklowdim} will proceed through a number 
of lemmas and the overall approach is inspired by \cite{Z_1984}.
As is common when proving lower bounds on Hausdorff dimensions we will
be utilizing Frostman's Lemma (see for example Theorem 4.13 of 
\cite{F_2014}). This lemma states that if there is a random
measure $\zeta$ supported on $\CV^k$ and satisfying 
$0 < \zeta(\CV^k) <\infty$ with finite $r-$energy, that is, 
\begin{equation}\label{eq:finiterenergy}
\CI_{r}(\zeta,[0,1]^k) 
= \int_{[0,1]^k \times [0,1]^k}\frac{\id\zeta(x)\id\zeta(y)}{\|x-y\|^{r}} < \infty, 
\end{equation}
then $\dim_\CH(\CV^k) \geq r$. When useful we will emphasize 
the dependence of $\zeta$ on $\omega$ by writing $\zeta(\omega).$
Observe that we allow a small abuse 
of notation by considering $\zeta$ to be a measure on $[0,1]^k$ rather 
than on $[0,1]^k\times\{0\}^{d-k}$. A similar comment applies to many 
places in this section. The objective is therefore to find a 
suitable random measure $\zeta$ as described.
This will allow us to conclude that $\BP(\dim_\CH(\CV^k) \geq r)$ 
is uniformly bounded away from 0 for $r< \lambda -k.$ 
It then follows that also $\BP(\dim_\CH(\CV^k) \geq \lambda-k)>0.$

The measure $\zeta$ will be obtained as a limit of a sequence 
of random measures $(\zeta_n)_{n\geq 1}.$ Therefore, we let 
$\zeta_n=\zeta_n(\omega)$ be a measure on $[0,1]^k$ defined by
\begin{equation}\label{eq:mundef}
\id\zeta_n(x) =  2^{\lambda n} \indicator (x \in \CV^k_n)\id x.
\end{equation}
We have the following lemma.
\begin{lemma}\label{lem:munmartingale}
Let $f: [0,1]^k \to \BR$ be a continuous and 
non-negative function with compact support, and let 
$X_n(f) = \int f d\zeta_n$. Then $(X_n(f))_{n \geq 1}$ is a non-negative 
martingale. 
\end{lemma}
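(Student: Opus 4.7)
The plan is to verify the three defining properties of a martingale: non-negativity, integrability, and the correct conditional expectation with respect to the natural filtration $\CF_n := \sigma(\omega_n)$. Non-negativity is immediate: $f\ge 0$ and $\zeta_n$ is a positive measure, so $X_n(f)\ge 0$. Integrability will follow from Proposition~\ref{prop:list} part~$b)$ together with Fubini; more precisely,
\[
\BE[X_n(f)] = \int_{[0,1]^k} f(x)\, 2^{\lambda n}\, \BP(x\in\CV_n^k)\, \id x = \int_{[0,1]^k} f(x)\,\id x < \infty,
\]
since $\BP(x \in \CV_n^k) = \BP(x \in \CV_n) = 2^{-\lambda n}$ for $x \in H_k$ (and $f$ has compact support, say in $[0,1]^k$, with $f$ continuous hence bounded).

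The substantive step is the martingale identity $\BE[X_{n+1}(f)\mid\CF_n]=X_n(f)$. First I would decompose the configuration as $\omega_{n+1} = \omega_n + \omega_{n,n+1}$, where $\omega_{n,n+1}$ consists of the pairs $(L,r)\in\omega$ with $2^{-(n+1)}<r\le 2^{-n}$. Since these two families come from disjoint regions of the intensity measure $\lambda\nu_d\times\varrho_s$, standard properties of Poisson processes give that $\omega_{n,n+1}$ is independent of $\CF_n$. For any $x\in H_k$, the event $\{x\in\CV_{n+1}^k\}$ factorises as $\{x\in\CV_n^k\}\cap\{x\notin\Fc(L,r)\text{ for every }(L,r)\in\omega_{n,n+1}\}$, so by independence and by Lemma~\ref{lem:basicmeasure}$\,b)$,
\[
\BP(x\in\CV_{n+1}^k\mid\CF_n) = \indicator(x\in\CV_n^k)\exp\!\left(-\lambda\int_{2^{-(n+1)}}^{2^{-n}}\nu_d(\CL_{B(x,r)})\,r^{-d}\,\id r\right) = 2^{-\lambda}\indicator(x\in\CV_n^k),
\]
since $\nu_d(\CL_{B(x,r)})=r^{d-1}$ and $\int_{2^{-(n+1)}}^{2^{-n}} r^{-1}\,\id r = \log 2$.

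Finally, I would apply Fubini to interchange the conditional expectation with the integral defining $X_{n+1}(f)$; this is justified because $f\ge 0$ (Tonelli) and the above bound on the first moment ensures everything is finite. The computation then reads
\[
\BE[X_{n+1}(f)\mid\CF_n] = \int_{[0,1]^k} f(x)\,2^{\lambda(n+1)}\BP(x\in\CV_{n+1}^k\mid\CF_n)\,\id x = \int_{[0,1]^k} f(x)\,2^{\lambda n}\indicator(x\in\CV_n^k)\,\id x = X_n(f),
\]
completing the proof. I do not anticipate a real obstacle here: the only slightly delicate point is invoking the independence of the radius-restricted Poisson processes, but this is a direct consequence of the product form of the intensity in \eqref{eqn:scaleinvariantmeasure} and the restriction theorem for Poisson processes.
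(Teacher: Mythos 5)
Your proof is correct and follows essentially the same route as the paper's: decompose the point process by radius band, exploit the independence coming from the product form of the intensity, compute the one-step conditional survival probability $2^{-\lambda}\indicator(x\in\CV_n^k)$ using Lemma~\ref{lem:basicmeasure}$\,b)$, and interchange integration and conditional expectation via Tonelli. The paper verifies the general identity $\BE[X_n(f)\mid\omega_m]=X_m(f)$ for arbitrary $m<n$ while you do the equivalent one-step version $n\to n+1$, and your exact computation $\BE[X_n(f)]=\int f$ is a slightly cleaner way to get integrability than the paper's upper bound, but these are cosmetic differences.
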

\begin{proof}
Clearly $X_n(f) \geq 0$. Since $f$ is continuous and 
with compact support, it is also bounded, and so
\[
0 \leq \BE[X_n(f)] = \BE\left[\int f \id\zeta_n\right] 
\leq \sup(f) \BE\left[ \int \id\zeta_n\right]
= \sup(f) 2^{\lambda n} \BE [\ell_k(\CV^k_n)] < \infty,
\]
(recall that $\ell_k$ denotes $k$-dimensional normalized Hausdorff measure). 
Now let $0 \leq m < n$. We have that
\begin{eqnarray*}
\lefteqn{\BE[X_n(f) | \omega_m] = \BE\left[\int f \id\zeta_n | \omega_m\right] 
= \BE\left[\int f(x)2^{\lambda n}\indicator(x \in \CV^k_n)\id x|\omega_m\right]} \\
&& = \BE\left[\int f(x) 2^{\lambda n} \indicator(x \in \CV^k_m)
\indicator(x \in \CV^k_{m,n})\id x | \omega_m\right] \\
&& = \int f(x) 2^{\lambda n} \BE[\indicator(x \in \CV^k_m)| \omega_m]
\BE[\indicator(x \in \CV^k_{m,n})| \omega_m]\id x  \\
&& = \int f(x) 2^{\lambda n} \indicator(x \in \CV^k_{m})
\BP(x \in \CV^k_{m,n})\id x, 
\end{eqnarray*}
where we used the independence of $\CV^k_{m}$ and $\CV^k_{m,n}$ 
in the penultimate equality, while in the last equality  
we used that $\CV^k_{m}$ is measurable 
with respect to the $\sigma$-algebra generated by $\omega_m$
and that $\CV^k_{m,n}$ is independent of $\omega_m$.
As in the proof of Proposition \ref{prop:list} part $b)$ we have that 
$\BP(x \in \CV^k_{m,n})= 2^{\lambda(m- n)}$
and so
\[
\int f(x) 2^{\lambda n} \indicator(x \in \CV^k_{m})
\BP(x \in \CV^k_{m,n})\id x 
= \int f(x) 2^{\lambda m} \indicator(x \in \CV^k_{m})\id x 
= \int f d\zeta_m
= X_m(f)
\]
and the proof is complete.
\end{proof}

\begin{corollary}\label{coro:weakrv}
Let $f: [0,1]^k \to \BR$ be a continuous (not 
necessarily non-negative) function with compact support and define 
$X_n(f)$ as before. Then, there exists $X(f) < \infty$ such that 
$X_n(f) \to X(f)$ almost surely.
\end{corollary}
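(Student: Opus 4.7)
The plan is to reduce to the non-negative case already handled by Lemma~\ref{lem:munmartingale} and then invoke the martingale convergence theorem. Write $f = f^+ - f^-$ with $f^+ = \max(f,0)$ and $f^- = \max(-f,0)$. Since $f$ is continuous with compact support, both $f^+$ and $f^-$ are continuous, non-negative, and compactly supported, so Lemma~\ref{lem:munmartingale} applies to each and shows that $(X_n(f^+))_{n\ge 1}$ and $(X_n(f^-))_{n\ge 1}$ are non-negative martingales (with respect to the filtration generated by $(\omega_n)_{n\ge 1}$).

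Next, I would apply Doob's martingale convergence theorem to each of these. Because each is a non-negative martingale, it is in particular $L^1$-bounded (the expectation is constant in $n$), hence converges almost surely to a finite, non-negative random variable. Denote these limits by $X(f^+)$ and $X(f^-)$ respectively; both are almost surely finite.

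Finally, because $\zeta_n$ is a non-negative (Radon) measure, linearity of the integral gives $X_n(f) = X_n(f^+) - X_n(f^-)$ for every $n$. Setting $X(f) := X(f^+) - X(f^-)$, we conclude that $X_n(f) \to X(f)$ almost surely and that $|X(f)| < \infty$ almost surely on the intersection of the two full-measure convergence events. There is no real obstacle here; the only point that needs to be checked is that the decomposition $f = f^+ - f^-$ inherits the hypotheses of Lemma~\ref{lem:munmartingale}, which is immediate from continuity of $f$ and compactness of its support.
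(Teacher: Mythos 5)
Your proof is correct and takes the same approach as the paper: decompose $f$ into its positive and negative parts, apply Lemma~\ref{lem:munmartingale} to each, and conclude via the martingale convergence theorem. You have merely spelled out the details that the paper leaves implicit.
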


\begin{proof}
Decompose the function in its positive and negative parts and apply  
Lemma \ref{lem:munmartingale} to each component. The result 
follows by martingale convergence.
\end{proof}

Let $\CM$ denote the set of continuous functions $f: [0,1]^k \to \BR$,
and for $f, g\in \CM$ let $\|f-g\|_u:=\sup_{x\in [0,1]^k}|f(x)-g(x)|$
denote the supremum norm. 
As it is a standard consequence of the Stone-Weierstrass Theorem that 
$(\CM,\|\cdot\|_u)$ is separable, we leave the proof of the following lemma to the reader.

\begin{lemma}\label{lem:separable}
The space $\CM$ contains a countable and dense (with respect
to the norm $\|\cdot \|_u$) subset $\CM_0$. 
Furthermore, there exists $g\in \CM_0$ such that $1\leq g(x)<\infty$
for every $x\in [0,1]^k.$
\end{lemma}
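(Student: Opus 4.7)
The plan is to invoke the Stone--Weierstrass theorem. Since $[0,1]^k$ is compact and Hausdorff, $(\CM, \|\cdot\|_u)$ is a real Banach algebra. First I would consider the subalgebra $\CP \subset \CM$ of polynomials in the variables $x_1,\ldots,x_k$ with real coefficients, restricted to $[0,1]^k$. This subalgebra contains the constant function $1$ and separates points of $[0,1]^k$ (the coordinate functions do so already), so the Stone--Weierstrass theorem yields that $\CP$ is dense in $\CM$ with respect to $\|\cdot\|_u$.

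To obtain a countable dense subset, I would let $\CM_0$ be the collection of polynomials in $x_1,\ldots,x_k$ with rational coefficients, again restricted to $[0,1]^k$. This set is countable since it is parametrised by finite tuples of rationals. For density, given $f \in \CM$ and $\varepsilon > 0$, I would first choose $p \in \CP$ with $\|f - p\|_u < \varepsilon/2$, and then approximate each real coefficient of $p$ by a sufficiently close rational. Because $[0,1]^k$ is bounded, each monomial is uniformly bounded on $[0,1]^k$, so a small perturbation of the finitely many coefficients of $p$ produces $q \in \CM_0$ with $\|p - q\|_u < \varepsilon/2$. The triangle inequality then gives $\|f - q\|_u < \varepsilon$, proving density of $\CM_0$.

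For the final assertion, the constant function $g(x) \equiv 1$ is a degree-zero polynomial with rational coefficient $1$ and hence lies in $\CM_0$, while trivially $1 \leq g(x) < \infty$ for every $x \in [0,1]^k$. There is no real obstacle to overcome here; the only step requiring any (elementary) care is the uniform control when replacing real coefficients by rationals, which is immediate from the boundedness of $[0,1]^k$.
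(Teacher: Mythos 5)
Your proof is correct, and it matches the approach the paper indicates: the paper explicitly says the lemma is a standard consequence of the Stone--Weierstrass theorem and leaves the details to the reader, which is exactly the route you follow with polynomials with rational coefficients. The final observation that the constant function $1$ lies in $\CM_0$ is the right way to dispose of the second assertion.
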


\begin{lemma}\label{lem:muexists}
Assume that $\lim \int f \id\zeta_n(\omega)$ exists and is finite for all 
$f \in \CM_0$. Then there exists a Radon measure $\zeta(\omega)$ such 
that $\zeta_n(\omega) \to \zeta(\omega)$ weakly. 
\end{lemma}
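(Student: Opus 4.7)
The plan is to extend the convergence hypothesis from the countable dense set $\CM_0$ to all of $\CM$, then apply the Riesz representation theorem. Since the measures $\zeta_n(\omega)$ are all supported in the compact set $[0,1]^k$, weak convergence against $C([0,1]^k)$ is equivalent to the standard weak convergence of Radon measures, so no tightness issue will arise.

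The first step is to obtain a uniform bound on the total masses $\zeta_n([0,1]^k)$. For this I use the dominating function $g \in \CM_0$ guaranteed by Lemma~\ref{lem:separable}, which satisfies $g(x) \geq 1$ on $[0,1]^k$. By hypothesis, $X_n(g) = \int g\, \id\zeta_n$ converges to a finite limit, hence the sequence $\{X_n(g)\}_{n \geq 1}$ is bounded. Since $\zeta_n([0,1]^k) \leq X_n(g)$, we conclude that $M := \sup_n \zeta_n([0,1]^k) < \infty$.

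The second step uses the density of $\CM_0$ in $(\CM, \|\cdot\|_u)$ to promote the convergence. For arbitrary $f \in \CM$ and any $f_m \in \CM_0$ one has the estimate
\[
\left| \int f\, \id\zeta_n - \int f_m\, \id\zeta_n \right| \leq \|f - f_m\|_u \cdot \zeta_n([0,1]^k) \leq M\|f - f_m\|_u.
\]
Combined with the convergence of $\int f_m\, \id\zeta_n$ as $n \to \infty$ and a standard three-epsilon argument (choose $f_m$ close to $f$ in supremum norm, then take $n$ large), this shows that $X_n(f) = \int f\, \id\zeta_n$ is Cauchy in $\R$, and hence converges. Call the limit $\Lambda(f)$.

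The third step is to verify that $\Lambda : \CM \to \R$ is a positive linear functional: linearity is inherited from the integrals $\int \cdot\, \id\zeta_n$ by passing to the limit, and positivity (i.e.\ $\Lambda(f) \geq 0$ whenever $f \geq 0$) is clear since each $\zeta_n$ is a positive measure. Furthermore $|\Lambda(f)| \leq M\|f\|_u$, so $\Lambda$ is continuous on $C([0,1]^k)$. Applying the Riesz--Markov--Kakutani representation theorem on the compact metric space $[0,1]^k$ yields a unique Radon measure $\zeta = \zeta(\omega)$ such that $\Lambda(f) = \int f\, \id\zeta$ for every $f \in \CM$. By construction $\int f\, \id\zeta_n \to \int f\, \id\zeta$ for every continuous $f$, which is exactly weak convergence $\zeta_n(\omega) \to \zeta(\omega)$. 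I expect no serious obstacle here; the only point requiring care is verifying the uniform mass bound, which is precisely why the auxiliary function $g$ was included in Lemma~\ref{lem:separable}.
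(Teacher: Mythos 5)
Your proof is correct and follows essentially the same route as the paper: use the dominating function $g\in\CM_0$ from Lemma~\ref{lem:separable} to get a uniform bound on $\zeta_n([0,1]^k)$, exploit the density of $\CM_0$ to push the convergence of $\int f\,\id\zeta_n$ from $\CM_0$ to all of $\CM$, and then invoke the Riesz representation theorem. The only cosmetic difference is that you show the sequence $\int f\,\id\zeta_n$ is Cauchy directly via a three-epsilon argument, whereas the paper first extends the limit functional to all of $\CM$ along approximating sequences and then identifies it with $\lim_n\int f\,\id\zeta_n$; the two presentations are equivalent.
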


\begin{proof}
Fix any $\omega$ satisfying the assumption. Since it is fixed, we will 
now again suppress it from our notation.
The proof is based on extending the linear functional 
$l(f) = \lim_n \int f \id \zeta_n$ initially defined for $f \in \CM_0$ to all 
$f\in \CM,$ and then apply Riesz representation theorem.
Let $g\in \CM_0$ be as in Lemma \ref{lem:separable} and note that
$\limsup_n \zeta_n([0,1]^d) = \limsup_n \int \id\zeta_n 
\leq \limsup_n \int g \id\zeta_n = \lim_n \int g \id\zeta_n < \infty$ since 
the limit exists and is finite by assumption. We conclude that
\begin{equation}\label{eq:boundedlimsup}
\limsup_{n \to \infty} \zeta_n([0,1]^d) < \infty.
\end{equation}
Fix $f\in \CM$ and let $(f_i)_{i\geq 1} \subset \CM_0$ be such that 
$\lim_{i \to \infty} \|f_i - f\|_u = 0.$ Then,
\[
\lim_{i,j \to \infty}|l(f_i) - l(f_j)| 
\leq \lim_{i,j \to \infty}\lim_{n \to \infty}\int |f_i - f_j| \id\zeta_n 
\leq \lim_{i,j \to \infty}\|f_i - f_j\|_u 
\limsup_{n \to \infty} \zeta_n([0,1])^d = 0,
\]
where we used that $(f_i)_{i\geq 1}$ converges (and in particular 
is a Cauchy sequence) and \eqref{eq:boundedlimsup}. We conclude 
that $(l(f_i))_{i \geq 1}$ is itself a Cauchy sequence and so converges,
and we can now define $l(f) := \lim_i l(f_i).$ Moreover, we have that 
the limit $\lim_n \int fd\zeta_n$ exists and equals $l(f)$. Indeed,
\begin{eqnarray*}
\lefteqn{\left|\lim_i l(f_i) -  \lim_n \int f \id\zeta_n\right|
\leq \lim_i \left|\lim_n \int f_i \id\zeta_n -\lim_n \int f \id\zeta_n \right| 
}\\
&&\leq \lim_i \lim_n \int |f_i -f| \id\zeta_n 
\leq \lim_i \|f_i - f\|_u\limsup_n \zeta_n([0,1]^d) = 0,
\end{eqnarray*}
again by \eqref{eq:boundedlimsup} and by the fact that 
$f_i \to f$ in the supremum norm.

Next, let $f\in \CM$ be a non-negative function and let  
$(f_i)_{i\geq 1} \subset \CM_0$ be a sequence approximating $f$. Then,
\begin{eqnarray*}
\lefteqn{l(f) = \lim_i l(f_i)
=\lim_i \lim_n \int f_i \id\zeta_n }\\
& & \geq \lim_i \lim_n \int -\|f-f_i \|_u \id\zeta_n
\geq - \lim_i \|f-f_i\|_u \limsup_n\zeta_n([0,1]^d) =0,
\end{eqnarray*}
where we used \eqref{eq:boundedlimsup} one more time.
We conclude that $l$ is a positive linear functional on $\CM$ and so  
by Riesz representation theorem (Theorem 7.2 from \cite{F_1999}) 
there exists a Radon measure $\zeta$ such that
\[
l(f) = \int f \id\zeta = \lim_n \int f \id\zeta_n,
\]
for all $f \in \CM$. We conclude that 
$\zeta_n(\omega) \to \zeta(\omega)$ weakly.
\end{proof}

We will now combine Corollary \ref{coro:weakrv} and Lemma \ref{lem:muexists}
to show the following result.
\begin{proposition} \label{prop:weakconv}
There exists a random measure $\zeta$ on $[0,1]^d$ such that 
for almost every $\omega,$ $\zeta_n(\omega) \to \zeta(\omega)$ weakly.
\end{proposition}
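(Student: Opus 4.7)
The plan is to combine the almost sure convergence from Corollary \ref{coro:weakrv} with the deterministic extension principle from Lemma \ref{lem:muexists}, using countability of the dense set $\CM_0$ to move the full-measure statement past the quantifier over test functions.

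More precisely, I would first invoke Lemma \ref{lem:separable} to fix a countable dense subset $\CM_0 \subset \CM$ (in the supremum norm). Since $[0,1]^k$ is compact, every $f \in \CM_0$ is continuous with compact support, so Corollary \ref{coro:weakrv} applies to give a set $\Omega_f$ with $\BP(\Omega_f)=1$ on which $X_n(f)(\omega) = \int f\,d\zeta_n(\omega)$ converges to a finite limit $X(f)(\omega)$. Setting
\[
\Omega^* := \bigcap_{f \in \CM_0} \Omega_f,
\]
countability of $\CM_0$ yields $\BP(\Omega^*)=1$.

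Now for each $\omega \in \Omega^*$ the hypothesis of Lemma \ref{lem:muexists} is satisfied, so the lemma produces a Radon measure $\zeta(\omega)$ on $[0,1]^k$ such that $\zeta_n(\omega) \to \zeta(\omega)$ weakly. For $\omega \notin \Omega^*$ one may simply set $\zeta(\omega)$ to be the zero measure. Measurability of $\omega \mapsto \zeta(\omega)$ in the appropriate sense follows because $\zeta(\omega)(f) = \lim_n \int f\,d\zeta_n(\omega)$ is a pointwise limit of measurable functions for each fixed $f \in \CM_0$, and the general case is obtained by the density argument used inside the proof of Lemma \ref{lem:muexists}.

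There is no real obstacle here: the work was done in Lemma \ref{lem:munmartingale}, Corollary \ref{coro:weakrv}, Lemma \ref{lem:separable} and Lemma \ref{lem:muexists}. The only thing to be careful about is the order of quantifiers — a.s.\ convergence of $X_n(f)$ holds for each $f$ on a full-measure event depending on $f$, and one needs a countable dense collection of test functions to intersect these events while keeping probability one. That is precisely the role played by $\CM_0$, and this is what makes the separability statement in Lemma \ref{lem:separable} essential.
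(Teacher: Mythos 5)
Your proposal is correct and follows essentially the same route as the paper: fix the countable dense set $\CM_0$ from Lemma \ref{lem:separable}, apply Corollary \ref{coro:weakrv} for each $f\in\CM_0$ to obtain a full-measure event $\Omega_f$, intersect over the countable collection, and apply Lemma \ref{lem:muexists} on that intersection. The remark on measurability is a small, correct addition the paper does not spell out.
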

\begin{proof}
Let $\CM_0$ be as in Lemma \ref{lem:separable}.
Corollary \ref{coro:weakrv} implies that for every fixed $f \in \CM_0$
\[
\int f \id\zeta_n = X_n(f) \to X(f)
\]
except possibly for a null-set of $\omega$ that we will denote by $N(f)$. 
Then, Lemma \ref{lem:muexists} implies that for every 
$\omega \notin \cup_{f \in \CM_0}N(f)$ we have that 
$\lim \int f \id\zeta_n(\omega) \to \int f \id\zeta(\omega)$ for every $f \in \CM.$
Since $\cup_{f \in \CM_0}N(f)$ is a countable union of nullsets we
conclude that almost surely, $\zeta_n \to \zeta$ weakly.
\end{proof}

We can now prove Theorem \ref{thm:Vklowdim}.
\begin{proof}[Proof of Theorem \ref{thm:Vklowdim}]
As discussed before, we will use Frostman's Lemma. 
Proposition \ref{prop:weakconv} states that the sequence 
$(\zeta_n)_{n \geq 1}$ of random measures 
defined in \eqref{eq:mundef} converge weakly to a 
measure $\zeta$. In order to apply Frostman's Lemma, it suffices 
to show that with probability bounded away from $0$ uniformly in $r<k-\lambda$, the measure $\zeta$ is supported on $\CV^k$, that 
$0 < \zeta(\CV^k) < \infty$ and that $\zeta$ has finite 
$r-$energy (as explained in \eqref{eq:finiterenergy}). 

We will start by considering the support of $\zeta.$ To that end, 
let $x\in [0,1]^k \setminus \CV^k.$ Then we must have that for some 
$n,$ $x\in [0,1]^k \setminus \CV_n^k.$ Furthermore, since $\CV_n^k$
is a compact set, there exists some $\delta>0$ such that 
$B(x,\delta) \subset [0,1]^k \setminus \CV_n^k \subset 
[0,1]^k \setminus \CV^k.$ We conclude that $x$ is not in the 
support of $\zeta,$ and so we must have that 
$\supp \zeta \subset \CV^k.$

Second, we observe that by Proposition \ref{prop:weakconv} and 
\eqref{eq:boundedlimsup} we have that almost surely, 
$\zeta(\CV^k) = \lim \zeta_n ([0,1]^d) <\infty.$ 

Our next step is to prove that $\zeta(\CV^k) >0$ with positive probability. 
To that end, let
\[
\|\zeta_n\|_{TV} = \int_{[0,1]^k}2^{\lambda n} \indicator(x \in \CV^k_n)\id x 
= 2^{\lambda n} \ell_k(\CV^k_n)
\]
be the total variation of $\zeta_n,$ and note that 
$\zeta(\CV^k) = \|\zeta\|_{TV} = \lim \|\zeta_n\|_{TV}.$ 
In what follows, we will prove that $\BE(\|\zeta\|_{TV}) =1$ and 
therefore that $\|\zeta\|_{TV} >0$ with positive probability. 
For that purpose, note that part $a)$ of Proposition
\ref{prop:list} implies that
\begin{equation}\label{eq:expectedtv}
\BE[\|\zeta_n \|_{TV}] 
= \BE\left[\int_{[0,1]^k}2^{n\lambda}\indicator(x \in \CV^k_n) \id x\right] 
 = \int_{[0,1]^k}2^{n\lambda}\BP(x \in \CV^k_n) \id x =1.
\end{equation}
Then, observe that by Lemma \ref{lem:munmartingale} applied to the constant 
function $f = 1$ we have that $(\|\zeta_n\|_{TV})_{n \geq 1}$ is a martingale.
By standard theory, if we can show that this martingale is bounded in 
$L^2$, it follows that $\BE[\|\zeta \|_{TV}]=\lim_n \BE[\|\zeta_n \|_{TV}]=1.$
Consider therefore
\begin{eqnarray*}
\lefteqn{\BE\left[\|\zeta_n\|_{TV}^2\right] 
= \BE\left[\int_{[0,1]^{k}\times [0,1]^k} 
2^{2\lambda n}\indicator(x \in \CV^k_n)\indicator(y \in \CV^k_n) \id x \id y\right]} \\
&&  = \int_{[0,1]^{k}\times [0,1]^k} 2^{2\lambda n}\BP(x,y \in \CV^k_n) \id x \id y
\leq \int_{[0,1]^{k}\times [0,1]^k} 
\frac{2^{2\lambda n} e^{\lambda C_2} 2^{-2 \lambda n}}{\|x-y\|^\lambda}\id x \id y \\
&&  = e^{\lambda C_2} \int_{[0,1]^k}\int_{[0,1]^k}  
\frac{1}{\|x-y\|^\lambda} \id x\id y, 
\end{eqnarray*}
where we used part $c)$ of Proposition \ref{prop:list} in the inequality. 
Recall the notation $\psi_m$ defined in Section \ref{sec:lincyl}
\begin{eqnarray}\label{eq:spherical}
\lefteqn{\int_{[0,1]^k}\left(\int_{[0,1]^k}  \frac{1}{\|x-y\|^\lambda} 
\id x\right)\id y 
\leq \max_{y\in [0,1]^k}\int_{[0,1]^k}\frac{1}{\|x-y\|^\lambda} \id x} \\
&& \leq \max_{y\in [0,1]^k} \left\{ \int_{\{x : d(x,y) \leq 1\}} 
\frac{1}{\|x-y\|^\lambda} \id x +1 \right\} \nonumber\\
& & = \psi_{k-1}
\int_0^1 r^{-\lambda}r^{k-1}\id r +1
= \frac{\psi_{k-1}}{k-\lambda} +1,\nonumber
\end{eqnarray}
for all $\lambda<k$. We conclude that 
\begin{equation}\label{eq:secondmomenttv}
\BE[\|\zeta_n\|_{TV}^2] 
\leq e^{C_2\lambda}\left(\frac{\psi_{k-1}}{k-\lambda} +1\right).
\end{equation}
In particular, we see that $\sup_n \BE[\|\zeta_n\|_{TV}^2] < \infty$ and so 
(as explained above), $\BE[\|\zeta\|_{TV}]=\BE[\zeta(\CV^k)]=1.$ Thus, 
$\zeta(\CV^k)>0$ with positive probability.

It remains to show that $\zeta$ has finite $r-$energy for every 
$r<k-\lambda.$ For such $r,$ we use part $c)$ of Proposition 
\ref{prop:list} to obtain
\begin{eqnarray} \label{eqn:uniformenergybound}
\lefteqn{\BE[\CI_r(\zeta_n)] 
= \BE\left[\int_{[0,1]^k \times [0,1]^k}
\frac{\id\zeta_n(x)\id\zeta_n(y)}{\|x-y\|^r}\right]}\\
& & = \BE\left[\int_{[0,1]^k \times [0,1]^k} 2^{2\lambda n}
\frac{\indicator(x \in \CV^k_n)\indicator(y \in \CV^k_n)}{\|x-y\|^r}\id x\id y\right]
\nonumber \\
&& = \int_{[0,1]^k \times [0,1]^k} 2^{2\lambda n}
\frac{\BP( x,y \in \CV^k_n)}{\|x-y\|^r} \id x \id y \nonumber \\
&&  \leq e^{\lambda C_2} \int_{[0,1]^k}\int_{[0,1]^k}
\frac{1}{\|x-y\|^{r+\lambda}}\id x \id y 
\leq  e^{\lambda C_2}
\left( \frac{\psi_{k-1}}{k-r-\lambda}+1\right), \nonumber
\end{eqnarray}
where the last inequality follows in the same way as in~\eqref{eq:spherical}.
Furthermore, we observe that 
\begin{eqnarray}\label{eqn:liminfenergy}
\lefteqn{ \CI_r(\zeta)
=\int_{[0,1]^k \times [0,1]^k}\frac{\id\zeta(x)\id\zeta(y)}{\|x-y\|^r}}\\
& & =\lim_{M \to \infty}
\int_{[0,1]^k \times [0,1]^k}\left(\frac{1}{\|x-y\|^r} \wedge M\right)
\id\zeta(x)\id\zeta(y) \nonumber\\
& & =\lim_{M \to \infty} \liminf_{n \to \infty}
\int_{[0,1]^k \times [0,1]^k}\left(\frac{1}{\|x-y\|^r} \wedge M\right)
\id\zeta_n(x)\id\zeta_n(y) \nonumber \\
& & \leq \liminf_{n \to \infty}
\int_{[0,1]^k \times [0,1]^k}\frac{1}{\|x-y\|^r} \id\zeta_n(x)\id\zeta_n(y)
=\liminf_{n \to \infty} \CI_r(\zeta_n)
\nonumber
\end{eqnarray}
where we used the monotone convergence theorem in the second equality 
and the fact that $\zeta_n \times \zeta_n \to \zeta \times \zeta$ weakly 
(since $\zeta_n \to \zeta$ weakly) in the third equality. 
We combine \eqref{eqn:uniformenergybound} and \eqref{eqn:liminfenergy}
to conclude that 
\[
\BE[\CI_r(\zeta)]\leq \BE[\liminf_{n \to \infty} \CI_r(\zeta_n)]
\leq \liminf_{n \to \infty} \BE[\CI_r(\zeta_n)]<\infty,
\]
and so $\CI_r(\zeta(\omega)) <\infty $ almost surely.

Since both $\CI_r(\zeta) <\infty $ and $\zeta(\CV^k)<\infty$
almost surely, we see that
\[
\BP(\exists \zeta: 0<\zeta(\CV^k)<\infty,\ \CI_r(\zeta)<\infty)
=\BP(\exists \zeta: \zeta(\CV^k)>0).
\]
Furthermore, for any $\omega$ such that there exists $\zeta=\zeta(\omega)$
with $0<\zeta(\CV^k)<\infty$ and $\CI_r(\zeta)<\infty,$ it follows from 
Frostman's lemma that $\dim_\CH(\CV^k) \geq r$ for every 
$r < k-\lambda.$ From this it then follows that for any such $\omega$, we have 
$\dim_\CH(\CV^k) \geq k-\lambda,$ and so we conclude that 
\[
\BP(\dim_\CH(\CV^k) \geq k-\lambda) \geq \BP(\exists \zeta: \zeta(\CV^k)>0)>0.
\]
\end{proof}

\noindent
\begin{remark}
In the proof of the previous proposition we showed that the limiting 
measure has positive mass (i.e. $\zeta(\CV^k)>0$) with positive probability,
and from this we concluded that also $\BP(\dim_\CH(\V^k)\geq k-\lambda)>0$.
Using Theorem \ref{thm:Vkuppdim} we then conclude that 
$\BP(\dim_\CH(\V^k)= k-\lambda)>0$. However, it is not far fetched to 
suspect that in fact $\dim_\CH(\V^k)= k-\lambda$ as soon as 
$\CV^k\neq \emptyset$. That is, whenever the fractal survives 
within $[0,1]^k \times \{0\}^{d-k}$ it must have dimension $k-\lambda.$
Indeed, a similar result holds for the much simpler case of the 
 Mandelbrot fractal percolation model (see \cite{CCD_1988}). While we 
are not currently able to prove this stronger statement for our model, 
we can provide an easy explicit lower bound on the probability of 
$\zeta(\CV^k)>0$ (and therefore also on $\BP(\dim_\CH(\V^k)\geq k-\lambda)$).
\end{remark}

\begin{proposition} \label{prop:explicitlowerbound}
We have that 
\[
\BP(\zeta(\CV^k)>0) \geq 
\frac{(k-\lambda)e^{-C_2 \lambda}}
{\psi_{k-1} +k -\lambda},
\]
where $C_2$ is defined in \eqref{eqn:constants}.
\end{proposition}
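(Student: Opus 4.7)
The plan is to apply the Paley--Zygmund (second moment) inequality to the non-negative random variable $Y=\zeta(\CV^k)=\|\zeta\|_{TV}$, using the first and second moment information already extracted in the proof of Theorem~\ref{thm:Vklowdim}.

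First I would establish that $\BE[\|\zeta\|_{TV}]=1$. The calculation \eqref{eq:expectedtv} gives $\BE[\|\zeta_n\|_{TV}]=1$ for every $n$, and Lemma~\ref{lem:munmartingale} applied to the constant function $f\equiv 1$ shows that $(\|\zeta_n\|_{TV})_{n\geq 1}$ is a non-negative martingale. The uniform $L^2$ bound \eqref{eq:secondmomenttv} implies that this martingale is $L^2$-bounded, and hence converges in $L^2$ (and a.s.) to its limit, which by Proposition~\ref{prop:weakconv} coincides with $\|\zeta\|_{TV}$. Thus $\BE[\|\zeta\|_{TV}]=\lim_n\BE[\|\zeta_n\|_{TV}]=1$.

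Next I would bound $\BE[\|\zeta\|_{TV}^2]$ from above. By $L^2$ convergence (or simply by Fatou's lemma combined with weak convergence of $\zeta_n\times\zeta_n$, as used in \eqref{eqn:liminfenergy}),
\[
\BE[\|\zeta\|_{TV}^2]\leq\liminf_{n\to\infty}\BE[\|\zeta_n\|_{TV}^2]\leq e^{C_2\lambda}\left(\frac{\psi_{k-1}}{k-\lambda}+1\right),
\]
where the second inequality is exactly \eqref{eq:secondmomenttv}.

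Finally, the Paley--Zygmund inequality $\BP(Y>0)\geq (\BE Y)^2/\BE[Y^2]$ applied to $Y=\zeta(\CV^k)$ gives
\[
\BP(\zeta(\CV^k)>0)\geq\frac{1}{e^{C_2\lambda}\bigl(\tfrac{\psi_{k-1}}{k-\lambda}+1\bigr)}=\frac{(k-\lambda)e^{-C_2\lambda}}{\psi_{k-1}+k-\lambda},
\]
which is precisely the claimed bound. There is no real obstacle here; the only point that needs a moment's care is the passage of the first two moments to the limit, and that is handled cleanly by the $L^2$-boundedness of the martingale. In particular, no new probabilistic input is required beyond what was already assembled in the proof of Theorem~\ref{thm:Vklowdim}.
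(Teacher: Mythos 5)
Your proposal is correct, and it gives exactly the claimed constant. It follows the same basic idea as the paper (Paley--Zygmund on the total mass, using the first- and second-moment information already extracted in the proof of Theorem~\ref{thm:Vklowdim}), but the ordering is different: the paper applies a parameterized Paley--Zygmund inequality to the \emph{pre-limit} quantity $\|\zeta_n\|_{TV}$, for each fixed $\alpha\in(0,1)$, passes the resulting bound on $\BP(\|\zeta_n\|_{TV}\geq\alpha)$ through to the limit via reverse Fatou, concludes $\BP(\zeta(\CV^k)\geq\alpha)\geq (1-\alpha)^2 (k-\lambda)e^{-C_2\lambda}/(\psi_{k-1}+k-\lambda)$, and then lets $\alpha\to 0$. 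You instead apply Paley--Zygmund once, directly to the limiting variable $\zeta(\CV^k)$, which requires first establishing $\BE[\|\zeta\|_{TV}]=1$ and $\BE[\|\zeta\|_{TV}^2]\leq e^{C_2\lambda}(\psi_{k-1}/(k-\lambda)+1)$; you correctly get both from the $L^2$-bounded martingale structure and the weak-convergence identity $\|\zeta\|_{TV}=\lim_n\|\zeta_n\|_{TV}$ (both already recorded in the paper). Your route is a bit more streamlined in that it dispenses with the $\alpha$-parameter and the reverse-Fatou step; the paper's route, by working at finite $n$ throughout, avoids needing to verify that the first moment survives the passage to the limit, but since that fact is anyway established earlier in the paper, the two are essentially equivalent in length and content.
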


\begin{proof}
Let $0<\alpha<1.$ By Proposition \ref{prop:weakconv} we have that
$\zeta_n \to \zeta,$ and so $\zeta_{n_i} \to \zeta$ along any subsequence. 
Therefore, if $\|\zeta_{n}(\omega)\|_{TV} \geq\alpha$ for infinitely many $n,$ 
there exists some subsequence $(n_i)_{i\geq 1}$ (depending on 
$\omega$) such that 
$\zeta(\CV^k)= \|\zeta\|_{TV} = \lim \|\zeta_{n_i}\|_{TV} \geq \alpha$.

Next, we use the Paley-Zygmund inequality to see that for every $n\geq1,$
\begin{eqnarray*}
\lefteqn{\BP(\|\zeta_n\|_{TV}\geq \alpha)  
= \BP(\|\zeta_n\|_{TV} \geq \alpha \BE[\|\zeta_n\|_{TV}])}\\
& & \geq (1 - \alpha)^2 \frac{\BE[\|\zeta_n\|_{TV}]^2}{\BE[\|\zeta_n\|_{TV}^2]} 
\geq \frac{(1-\alpha)^2(k-\lambda)e^{-C_2 \lambda}}{\psi_{k-1}+k -\lambda} >0 
\end{eqnarray*}
where we used \eqref{eq:expectedtv} in the first equality, and
\eqref{eq:expectedtv} and \eqref{eq:secondmomenttv} in the second inequality. 
By the reverse Fatou Lemma we then see that 
\begin{eqnarray*}
\lefteqn{\frac{(1-\alpha)^2(k-\lambda)e^{-C_2 \lambda}}{\psi_{k-1}+k -\lambda} 
\leq \limsup_n \BE[\indicator(\|\zeta_{n}\| \geq \alpha)]}\\
& & \leq \BE[\limsup \indicator(\|\zeta_{n}\| \geq \alpha)]
= \BP(\|\zeta_{n}\| > \alpha \text{ i.o.}),
\end{eqnarray*}
and so we conclude, using the discussion in the beginning of the proof, that 
\[
\BP(\zeta(\CV^k)\geq \alpha)
\geq \frac{(1-\alpha)^2(k-\lambda)e^{-C_2 \lambda}}{\psi_{k-1}+k -\lambda}.
\]
Then finally, we see that 
\[
\BP(\zeta(\CV^k) >0)
= \sup_{\alpha>0}\BP(\zeta(\CV^k) \geq \alpha)
\geq \sup_{\alpha>0}
\frac{(1-\alpha)^2(k-\lambda)e^{-C_2 \lambda}}{\psi_{k-1} +k -\lambda}
=\frac{(k-\lambda)e^{-C_2 \lambda}}{\psi_{k-1} +k -\lambda}.
\]
\end{proof}

We can now prove Theorem \ref{thm:Hausdorff}. 
\begin{proof}[Proof of Theorem \ref{thm:Hausdorff}]

Theorem \ref{thm:Vkuppdim} states that $\dim_\CH(\V \cap H_k)\leq k-\lambda$
almost surely. 

Furthermore, the event $F:=\{\dim_\CH(\V \cap H_k)\geq k-\lambda\}$
is clearly shift-invariant so that by Lemma \ref{lem:zeroone},
$\BP(F) \in \{0,1\},$ and since by Theorem 
\ref{thm:Vklowdim} we have that $\BP(F)>0$ we conclude that 
$\dim_\CH(\V \cap H_k)\geq k-\lambda$ almost surely. 
\end{proof}

\section{The invariant measure on the space of lines}\label{sec:intensitymeasure}
The aim of this section is to derive the representation of the invariant 
measure $\nu_d$ given in \eqref{eqn:linmeasrep}. We will do this starting 
from a third representation given in \cite{K_2017}.

We first recall the parametrization we use for lines in this paper. 
We write a line $L\in {\mathrm A}(d,1)$ as $L=L(a,p)=\{a t +p\,:\,t\in \R\}$ 
where $a=(a_1,\ldots,a_{d-1},1)\in \R^{d-1}\times \{1\}$ and 
$p=(p_1,\ldots,p_{d-1},0)\in \R^{d-1}\times \{0\}$.  We have the following 
result.
\begin{theorem}\label{thm:altrepresentation}
For $d\ge 2$, the Haar measure on $\mathrm{A}(d,1) $ is given by
\begin{equation}\label{eqn:altrepresentation}
\id \nu_d(L)=\id \nu_d( a_1, a_2,...,a_{d-1}, p_1,p_2,...,p_{d-1})
= \frac{\Upsilon_d}{ \|a\|^{d+1} } 
\id a_1 \id a_2 ... \id a_{d-1} \id p_1 ...\id p_{d-1}.
\end{equation}
\end{theorem}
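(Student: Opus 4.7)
The plan is to start from the standard representation \eqref{eqn:nudformula} (which is the form one extracts from \cite{K_2017}) and change variables from the intrinsic parametrization $(u,y)\in\partial B(o,1)_+\times u^\perp$ to the parametrization $(\bar{a},\bar{p})\in\R^{d-1}\times\R^{d-1}$ used throughout the paper. Here $u$ is the unit direction of the line on the upper hemisphere, $y$ is the foot of the perpendicular from the origin, $\bar{a}=(a_1,\ldots,a_{d-1})$ is such that $a=(\bar{a},1)$ gives the (unnormalised) direction, and $\bar{p}=(p_1,\ldots,p_{d-1})$ is such that $(\bar{p},0)=L\cap H_{d-1}$. Since $\haara$-almost every line meets $H_{d-1}$ transversally, this change of variables captures $\nu_d$ up to a null set.

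First I would express the direction change. Writing $u=(\bar{a},1)/\|a\|$ with $\|a\|=\sqrt{1+|\bar{a}|^2}$, the upper hemisphere $\partial B(o,1)_+$ is parametrised by $\bar{a}\in\R^{d-1}$. Direct differentiation gives $\partial_i\phi=e_i/\|a\|-a_i(\bar{a},1)/\|a\|^3$, and the induced metric is
\[
g_{ij}=\tfrac{1}{\|a\|^2}\delta_{ij}-\tfrac{a_ia_j}{\|a\|^4}
=\tfrac{1}{\|a\|^2}\Bigl(I-\tfrac{\bar{a}\bar{a}^T}{\|a\|^2}\Bigr)_{ij}.
\]
By Sylvester's determinant identity, $\det(I-\bar{a}\bar{a}^T/\|a\|^2)=1-|\bar{a}|^2/\|a\|^2=1/\|a\|^2$, hence $\det g=\|a\|^{-2d}$ and
\[
\id \ell_{d-1}(u)=\|a\|^{-d}\id a_1\cdots \id a_{d-1}.
\]

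Next I would express the position change. For fixed direction $u$, the point $y\in u^\perp$ is the orthogonal projection of the point $(\bar{p},0)\in H_{d-1}$ onto the hyperplane $u^\perp$. Both $H_{d-1}$ and $u^\perp$ are hyperplanes in $\R^d$ with unit normals $e_d$ and $u$ respectively, so the linear map $\bar{p}\mapsto y$ is a bijection $\R^{d-1}\to u^\perp$ whose Jacobian is the cosine of the dihedral angle between these hyperplanes, i.e.\ $|e_d\cdot u|=1/\|a\|$. Therefore
\[
\id \ell_{d-1}(y)=\tfrac{1}{\|a\|}\id p_1\cdots \id p_{d-1}.
\]

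Combining the two Jacobians in \eqref{eqn:nudformula} yields
\[
\id \nu_d(L)=\tfrac{4\pi}{\psi_d\psi_{d-1}}\cdot\tfrac{1}{\|a\|}\cdot\tfrac{1}{\|a\|^d}\id \bar{a}\,\id \bar{p}
=\tfrac{\Upsilon_d}{\|a\|^{d+1}}\,\id a_1\cdots \id a_{d-1}\id p_1\cdots \id p_{d-1},
\]
which is the claim. The only real obstacle is the hemisphere Jacobian in the first step; the projection step is immediate from the cosine-of-dihedral-angle formula, and no additional normalisation check is needed because the constant $\Upsilon_d$ is already pinned down in \eqref{eqn:defupsilond} by the calculation just below \eqref{eqn:nudformula} showing $\nu_d(\CL_{B(o,1)})=1$.
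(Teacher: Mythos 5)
Your proof is correct, but it takes a genuinely different route from the paper's. You start from the perpendicular-foot representation \eqref{eqn:nudformula} (which the paper attributes to \cite{SW_2008}, not to \cite{K_2017} as your parenthetical suggests) and perform two separate Jacobian computations: a hemisphere surface-measure Jacobian $\id\ell_{d-1}(u)=\|a\|^{-d}\id\bar a$ via the induced metric and Sylvester's identity, plus an orthogonal-projection Jacobian $\id\ell_{d-1}(y)=\|a\|^{-1}\id\bar p$ via the cosine of the dihedral angle. The paper instead begins from Kendall's representation \eqref{eqn:kendallrepr}, which already parametrizes by the intersection point $p\in H_{d-1}$ and the unit direction $\alpha$ together with a $\sin\theta$ correction; it then cancels the $\sin\theta$ factor against the hemisphere volume element \eqref{eqn:volhemisphere} to get a flat-coordinate form $\Upsilon_d\,\id\alpha\,\id p$, and finishes with a single Jacobian $|\det(\partial\alpha/\partial a)|=\|a\|^{-(d+1)}$ via the matrix determinant lemma. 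Both are complete; your version is perhaps more self-contained since it relies only on the formula the paper has already set up and verified (via $\nu_d(\CL_{B(o,1)})=1$), whereas the paper's proof requires importing and renormalizing an external result, though it compresses the direction-and-position change into one determinant computation. As a sanity check, your two Jacobians are consistent with the paper's: from \eqref{eqn:volhemisphere} and $1-\sum\alpha_i^2=\|a\|^{-2}$ one recovers $\id\ell_{d-1}(\alpha)=\|a\|^{-(d+1)}\cdot\|a\|\,\id\bar a=\|a\|^{-d}\id\bar a$, matching your direction Jacobian, and the projection factor $\|a\|^{-1}$ is exactly the residual needed to match the paper's total $\|a\|^{-(d+1)}$.
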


Before the proof of Theorem~\ref{thm:altrepresentation} we describe a 
slightly different parametrization of a line $L\in {\mathrm A}(d,1)$. 
Let (as in Section \ref{sec:lincyl} )
$\uhs=\partial B(o,1) \cap \{x\in \R^d\,:\,x_d>0\}$ denote the upper hemisphere.
We will write $\alpha\in \uhs$ as $\alpha=(\alpha_1,\ldots,\alpha_d)$ 
where $\alpha_d=(1-\sum_{i=1}^{d-1}\alpha_i^2)^{1/2}$. Any line 
$L\in {\mathrm A}(d,1)\setminus \tilde{{\mathrm A}}(d,1)$ can be 
uniquely written as $L=L(\alpha,p)=\{\alpha t + p\,:\,t\in \R\}$ where 
$\alpha \in  \uhs$ and $p$ is as above.  In this parametrization, $p$ 
is again the intersection between $L(\alpha,p)$ and $H_{d-1}$, while 
$\alpha$ describes the direction of the line.

According to Equation $(1.7)$ in \cite{K_2017},  the invariant measure 
$\id \nu_d(L)$ using the parametrization $(\alpha,p)$ is given by
\begin{equation}\label{eqn:kendallrepr}
\id \nu_d(\alpha,p)
=\Upsilon_d \sin(\theta) \id p_{1}\cdots \id p_{d-1} 
\indicator(\alpha \in \uhs)\ell_{d-1}(\id \alpha),
\end{equation}
where 
$\theta=\theta(\alpha)$ is defined as the angle between $L(\alpha,p)$ 
and $H_{d-1}$. We note that the normalization used in \cite{K_2017}
is different (see in particular $(1.3)$ in that paper) from the one 
we use here, and this must be taken into account when arriving 
at \eqref{eqn:kendallrepr}.

Note that 
\begin{equation}\label{eqn:sinalpha}
\sin(\theta)=\left(1-\sum_{i=1}^{d-1}\alpha_i^2\right)^{1/2}.
\end{equation}
Moreover, according to Equation A.3 in \cite{ABR_1992}, we have that
\begin{equation}\label{eqn:volhemisphere}
\indicator(\alpha \in \uhs)\id \ell_{d-1}( \alpha) 
= \frac{\id \alpha_1\cdots \id \alpha_{d-1}}
{(1-\sum_{i=1}^{d-1} \alpha_i^2)^{1/2}}.
\end{equation}
By~\eqref{eqn:kendallrepr},~\eqref{eqn:sinalpha} and~\eqref{eqn:volhemisphere} 
we get
\begin{equation}\label{eqn:kendallrepr2}
\id \nu_d(\alpha,p)
=\Upsilon_d \id \alpha_1\cdots \id \alpha_{d-1} \id p_1\cdots \id p_{d-1}.
\end{equation}
We can now prove Theorem \ref{thm:altrepresentation}.

\begin{proof}[Proof of Theorem \ref{thm:altrepresentation}]
Note that $a\mapsto a /\|a\|$ is a bijection between 
$\R^{d-1}\times \{1\}$ and the hemisphere $\uhs$. To go between 
the parametrization $(a,p)$ and the parametrization $(\alpha,p)$ we simply 
let $\alpha= a /\|a\|$. Then $L(a,p)$ is the same line as $L(\alpha,p)$. 
From~\eqref{eqn:kendallrepr2} we see that to 
obtain~\eqref{eqn:altrepresentation} from~\eqref{eqn:kendallrepr}, it suffices 
to show that the determinant of the Jacobian corresponding to this change of 
coordinates is given by $\|a\|^{-(d+1)}$.

We have that
$$
\alpha_i=\frac{a_i}{\|a\|}=\frac{a_i}{\sqrt{a_1^2+\ldots+a_{d-1}^2+1}},
$$
for $i=1,\ldots ,d$,
where we recall that $a_d=1$. 
Let $M$ be the Jacobian corresponding to the change of variables from $a$
to $\alpha$. That is, $M$ is the $(d-1)\times (d-1)$ square matrix with entries 
$(m_{i,j})_{1\le i,j\le d-1}$ where 
$m_{i,j}=\frac{\partial \alpha_i}{\partial a_j}$. A straightforward 
calculation shows that 
\[
m_{i,i}=\frac{\partial \alpha_i}{\partial a_i}=\frac{\|a\|^2-a_i^2}{\|a\|^3},
\textrm{ and that } \
m_{i,j}=\frac{\partial \alpha_i}{\partial a_j} =\frac{-a_i a_j}{\|a\|^3}
\textrm{ whenever } i \neq j.
\]
Let $u=-(a_1,\ldots,a_{d-1})$ 
so that 
\[
M=\frac{1}{\|a\|^3}(\|a\|^2 I_{d-1}-u^T u).
\]
We then see that by the matrix determinant lemma,
\begin{eqnarray*}
\lefteqn{{\rm det}(M)
=\frac{1}{\|a\|^{3(d-1)}} {\rm det}(\|a\|^2 I_{d-1}-u^T u)}\\
& & =\frac{1}{\|a\|^{3(d-1)}}
(1-u (\|a\|^2 I_{d-1})^{-1} u^T ){\rm det}(\|a\|^2 I_{d-1}) \\
& & =\frac{1}{\|a\|^{d-1}}\left(1-\frac{\sum_{k=1}^{d-1}a_k^2}{\|a\|^2}\right)
=\frac{1}{\|a\|^{d-1}}\frac{1}{\|a\|^2}=\frac{1}{\|a\|^{d+1}},
\end{eqnarray*}
and the statement follows.
\end{proof}

\section{Cylinders intersecting subspaces} \label{sec:cylintersect} 
The aim of this section is to study the intersection 
${\mathcal C}(\omega)\cap H_k$ for $k\le d-1$.
Our first result of this section describes the shape of the intersection
$\mathfrak{c}(L,r)\cap H_k$ for $L\in \Line_ {H_k^r}$. Recall the notation introduced in Section ~\ref{sec:lincyl}.

\begin{lemma}\label{lem:cylinderellipsoid}
Fix $k\in \{1,\ldots,d-1\}$. For $L=L(\ba,\bp) \in \Line_{H_k^r}$ 
the set
\[
\mathfrak{c}(L,r)\cap H_k
\]
is an ellipsoid defined by the equation
\[
(\cent_k-x) \left( I_{k\times k} - \|\ba \|^{-2} A \right) (\cent_k-x)^T 
< r^2 -\|\puk\|^2 +\frac{\langle \auk,\puk \rangle^2}{1+\|\auk \|^2},
\]
where 
\begin{equation}\label{eqn:centerequation}
\cent_k = \pdk- \frac{\langle \auk,\puk \rangle^2}{1+\|\auk \|^2} \adk
\end{equation}
and
\[
A = \adk^T \adk = (a_{(k),i} a_{(k),j})_{1 \leq i,j \leq k},
\]
is the outer product of the vector $\adk$ with itself. More concretely, 
the ellipsoid has one major axis given by
\[
\frac{\| a\|}{\sqrt{1 + \|\auk\|^2}} \left( r^2 -\|\puk \|^2 
+ \frac{\langle \auk,\puk \rangle^2}{1+ \|\auk\|^2} \right)^{1/2}
\]
which extends in the direction $\adk/\|\adk\|$ and the lengths of
the remaining $k-1$ axes are given by
\[
\left( r^2 -\|\puk \|^2 
+ \frac{\langle \auk,\puk \rangle^2}{1+ \|\auk\|^2} \right)^{1/2}
\]
and these extends in the directions orthogonal to $\adk/\|\adk\|$ in $H_k$.
\end{lemma}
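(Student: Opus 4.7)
The plan is to reduce the lemma to a direct computation of the squared distance from a point $x\in H_k$ to the line $L(\ba,\bp)$, and then to recognize the resulting sublevel set as an ellipsoid in $\R^k$ after completing the square. Writing $x=(\xdk,0_{d-k})$ and using the standard projection formula, the squared distance from $x$ to $L(\ba,\bp)$ is
\[
d(x,L)^2=\|x-\bp\|^2-\frac{\langle x-\bp,\ba\rangle^2}{\|\ba\|^2}.
\]
Substituting $\ba=(\adk,\auk,1)$ and $\bp=(\pdk,\puk,0)$, I get $\|x-\bp\|^2=\|\xdk-\pdk\|^2+\|\puk\|^2$, $\langle x-\bp,\ba\rangle=\langle \xdk-\pdk,\adk\rangle-\langle \auk,\puk\rangle$, and $\|\ba\|^2=1+\|\auk\|^2+\|\adk\|^2$. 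The defining inequality $d(x,L)^2<r^2$ thus becomes
\[
(\xdk-\pdk)\bigl(I-\|\ba\|^{-2}A\bigr)(\xdk-\pdk)^T+\frac{2\langle\auk,\puk\rangle}{\|\ba\|^2}\langle\xdk-\pdk,\adk\rangle<r^2-\|\puk\|^2+\frac{\langle\auk,\puk\rangle^2}{\|\ba\|^2},
\]
with $A=\adk^T\adk$.

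Next I would complete the square by writing $\xdk-\pdk=w+\mu\adk$ and choosing $\mu$ so that the cross term in $w$ vanishes. Using the identity $(I-\|\ba\|^{-2}A)\adk^T=\tfrac{1+\|\auk\|^2}{\|\ba\|^2}\adk^T$ (which encodes the fact that $\adk$ is an eigenvector of the matrix $Q:=I-\|\ba\|^{-2}A$), a short computation gives $\mu=-\langle\auk,\puk\rangle/(1+\|\auk\|^2)$. This identifies the center as
\[
\cent_k=\pdk-\frac{\langle\auk,\puk\rangle}{1+\|\auk\|^2}\adk,
\]
and after collecting the constant terms (using $\|\ba\|^2=1+\|\auk\|^2+\|\adk\|^2$), the inequality simplifies to
\[
(\xdk-\cent_k)\,Q\,(\xdk-\cent_k)^T<r^2-\|\puk\|^2+\frac{\langle\auk,\puk\rangle^2}{1+\|\auk\|^2},
\]
which is exactly the asserted ellipsoid equation (note that $Q$ is symmetric, so $(\cent_k-x)Q(\cent_k-x)^T$ gives the same form).

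To finish, I would diagonalize $Q$, which is a rank-one perturbation of the identity. The vector $\adk^T$ is an eigenvector with eigenvalue $(1+\|\auk\|^2)/\|\ba\|^2$, while every vector in $\adk^\perp\subset\R^k$ is an eigenvector with eigenvalue $1$. The semi-axis lengths are therefore obtained by dividing the RHS constant $R:=r^2-\|\puk\|^2+\langle\auk,\puk\rangle^2/(1+\|\auk\|^2)$ by the corresponding eigenvalue and taking square roots, yielding $\sqrt{R}\,\|\ba\|/\sqrt{1+\|\auk\|^2}$ in the direction $\adk/\|\adk\|$ and $\sqrt{R}$ in each of the $k-1$ directions orthogonal to $\adk$ in $H_k$. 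The only mildly delicate step is the completion of the square in vector form and the bookkeeping to verify that the various constants collapse into the compact expression for $R$; both are routine but require keeping track of the splitting $\|\ba\|^2=\|\adk\|^2+(1+\|\auk\|^2)$, which I expect to be the only nontrivial algebraic simplification.
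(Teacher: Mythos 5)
Your proof is correct and follows essentially the same route as the paper's: compute the minimal squared distance from a point of $H_k$ to the line $L(a,p)$ (you invoke the projection formula, the paper minimizes $f(t)$ explicitly, which is the same computation), expand via the block decomposition $a=(\adk,\auk,1)$, $p=(\pdk,\puk,0)$, complete the square by a shift along $\adk$ (your $\mu\adk$ is exactly the paper's $q_k$, and requiring the cross term to vanish is equivalent to the paper's direct verification), and diagonalize the rank-one perturbation $I-\|a\|^{-2}A$. One thing worth noting: your derived centre $\cent_k = \pdk - \frac{\langle\auk,\puk\rangle}{1+\|\auk\|^2}\adk$ agrees with the paper's own proof, and the displayed formula~\eqref{eqn:centerequation} in the lemma statement has a spurious square on $\langle\auk,\puk\rangle$, which is a typo in the paper (the square belongs only in the constant on the right-hand side of the ellipsoid inequality, not in the centre).
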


\begin{proof}
Recall that we write $L(a,p)=\{at + p\,:\,t\in \R\}$, see Section~\ref{sec:lincyl}. Let $x_k\in \R^k$, and for convenience, write $x=(x_k,0,\ldots,0)\in \R^d.$ 
The squared distance from the point $x$ to the point $L_t:=at +p$ is 
given by 
\[
f(t)=\|L_t-x\|^2=\|at+p-x\|^2
=\|a\|^2t^2+\|p-x\|^2+2t\langle a,p-x\rangle,
\]
and since $f'(t)=2t\|a\|^2+2\langle a,p-x\rangle$, we see that $f(t)$ is 
minimised at $t^*=-\frac{\langle a,p-x\rangle}{\|a\|^2}.$ Furthermore, 
\begin{eqnarray*}
\lefteqn{f(t^*)=\frac{\langle a,p-x\rangle^2}{\|a\|^2}+\|p-x\|^2
-2\langle a,p-x\rangle\frac{\langle a,p-x\rangle}{\|a\|^2}}\\
& & =\|p-x\|^2-\frac{\langle a,p-x\rangle^2}{\|a\|^2}
=\|\pdk-x_k\|^2+\|\puk\|^2-\frac{\left(\langle \adk,\pdk-x_k\rangle
+\langle \auk,\puk\rangle\right)^2}{\|a\|^2} \\
& & =\|\pdk-x_k\|^2+\|\puk\|^2-\frac{\langle \adk,\pdk-x_k\rangle^2
+\langle \auk,\puk\rangle^2
+2\langle \adk,\pdk-x_k\rangle \langle \auk,\puk\rangle}{\|a\|^2}.
\end{eqnarray*}
Let $A$ be as above, and let 
\[
q_k=(q_{k,1},\ldots,q_{k,k})=-\frac{\langle \auk,\puk\rangle}{\|\auk\|^2+1} \adk.
\]
Since $I-A/\|a\|^2$ is symmetric, we have that
\begin{eqnarray} \label{eqn:threeterms}
\lefteqn{(\pdk+q_k-x_k)\left(I-\frac{1}{\|a\|^2}A\right)(\pdk+q_k-x_k)^T}\\
& & =(\pdk-x_k)\left(I-\frac{1}{\|a\|^2}A\right)(\pdk-x_k)^T \nonumber \\
& & \ \ \ \ +2q_k\left(I-\frac{1}{\|a\|^2}A\right)(\pdk-x_k)^T
+q_k\left(I-\frac{1}{\|a\|^2}A\right)q_k^T, \nonumber
\end{eqnarray}
and furthermore,
\begin{eqnarray*}
\lefteqn{(\pdk-x_k)A(\pdk-x_k)^T}\\
& & =(\pdk-x_k)\adk^T \adk(\pdk-x_k)^T
=\langle \adk,\pdk-x_k \rangle^2.
\end{eqnarray*}
Therefore, the first term on the right hand side of \eqref{eqn:threeterms} 
equals
\[
(\pdk-x_k)\left(I-\frac{1}{\|a\|^2}A\right)(\pdk-x_k)^T
=\|\pdk-x_k\|^2-\frac{\langle \adk,\pdk-x_k \rangle^2}{\|a\|^2}.
\]
Continuing, we note that since $\adk A=\adk \adk^T \adk=\|\adk\|^2 \adk$ 
we have that
\begin{eqnarray*} 
\lefteqn{q_k\left(I-\frac{1}{\|a\|^2}A\right)(\pdk-x_k)^T}\\
& & =-\frac{\langle \auk,\puk\rangle}{\|\auk\|^2+1}
\left(\adk-\frac{1}{\|a\|^2}\|\adk\|^2\adk\right)(\pdk-x_k)^T \\
& & =-\frac{\langle \auk,\puk\rangle}{\|\auk\|^2+1}
\frac{\|a\|^2-\|\adk\|^2}{\|a\|^2}\langle \adk,\pdk-x_k \rangle
=-\frac{\langle \auk,\puk\rangle}{\|a\|^2}\langle \adk,\pdk-x_k \rangle.
\end{eqnarray*}
For the third term note that in the same way,
\begin{eqnarray*} 
\lefteqn{q_k\left(I-\frac{1}{\|a\|^2}A\right)q_k^T
=-\frac{\langle \auk,\puk\rangle}{\|a\|^2}\langle \adk,q_k\rangle}\\
& & =-\frac{\langle \auk,\puk\rangle}{\|a\|^2}\langle \adk,
-\frac{\langle \auk,\puk\rangle}{\|\auk\|^2+1} \adk\rangle \\
& & =\frac{\langle \auk,\puk\rangle^2}{\|a\|^2(\|\auk\|^2+1)}
\langle \adk,\adk\rangle
=\frac{\langle \auk,\puk\rangle^2 \|\adk\|^2}{\|a\|^2(\|\auk\|^2+1)}.
\end{eqnarray*}
Inserting all of the above into \eqref{eqn:threeterms} we arrive at 
\begin{eqnarray}
\lefteqn{(\pdk+q_k-x_k)\left(I-\frac{1}{\|a\|^2}A\right)(\pdk+q_k-x_k)^T}\\
& & =\|\pdk-x_k\|^2-\frac{\langle \adk,\pdk-x_k \rangle^2}{\|a\|^2}
-2\frac{\langle \auk,\puk\rangle}{\|a\|^2}\langle \adk,\pdk-x_k \rangle
+\frac{\langle \auk,\puk\rangle^2 \|\adk\|^2}{\|a\|^2(\|\auk\|^2+1)} \nonumber\\
& & =\|\pdk-x_k\|^2+\|\puk\|^2-\frac{\langle \adk,\pdk-x_k\rangle^2
+\langle \auk,\puk\rangle^2
+2\langle \adk,\pdk-x_k\rangle \langle \auk,\puk\rangle}{\|a\|^2} \nonumber \\
\ \ \ \ & & -\|\puk\|^2+\frac{\langle \auk,\puk\rangle^2 }{\|a\|^2}
\left(\frac{\|\adk\|^2}{\|\auk\|^2+1}+1\right) \nonumber \\
& & =f(t^*)-\|\puk\|^2+\frac{\langle \auk,\puk\rangle^2 }{\|a\|^2}
\left(\frac{\|a\|^2}{\|\auk\|^2+1}\right)
=f(t^*)-\|\puk\|^2+\frac{\langle \auk,\puk\rangle^2 }{\|\auk\|^2+1}.
\nonumber
\end{eqnarray}
The intersection of $\mathfrak{c}(L,r)$ with $H_k$ is given by 
those $x$ such that $t^*=t^*(x)$ satisfies the inequality
$f(t^*)< r^2$. The boundary is therefore given by $f(t^*)=r^2$ 
or equivalently
\begin{equation} \label{eqn:ellipseq}
(\pdk+q_k-x_k)\left(I-\frac{1}{\|a\|^2}A\right)(\pdk+q_k-x_k)^T
=r^2-\|\puk\|^2+\frac{\langle \auk,\puk\rangle^2 }{\|\auk\|^2+1}.
\end{equation}

It is well known that  
$(x-v)B(x-v)^T=r^2$ defines an ellipsoid centred at $v$ and with 
axis along the eigenvectors of $B$ whenever $B$ is a positive definite matrix.
Furthermore, the length of these axes are given by $r$ times one over 
the square root of the corresponding eigenvalues.

Clearly we have that for any $x\in \BR^k,$
\[
x\left(I-\frac{1}{\|a\|^2}A\right)x^T
=\|x\|^2-\frac{x \adk^T \adk x^T}{\|a\|^2}
=\|x\|^2-\frac{\langle x, \adk \rangle^2 }{\|a\|^2}
\geq \|x\|^2\left(1-\frac{\|\adk\|^2}{\|a\|^2}\right),
\]
and so $\left(I-\frac{1}{\|a\|^2}A\right)$ is a positive definite matrix.
Furthermore, the center of the ellipsoid is given by 
\[
\pdk+q_k=\pdk-\frac{\langle \auk,\puk\rangle}{\|\auk\|^2+1} \adk.
\]

It remains to determine the eigenvectors and the corresponding 
eigenvalues. To that end, observe that 
\[
\left(I-\frac{1}{\|a\|^2}A\right)\adk^T
=\adk^T-\frac{\adk^T \adk}{\|a\|^2}\adk^T
=\frac{\|a\|^2-\|\adk\|^2}{\|a\|^2}\adk^T
=\frac{\|\auk\|^2+1}{\|a\|^2}\adk^T
\]
and so $u_1=\adk$ is an eigenvector corresponding to the eigenvalue 
$\frac{\|\auk\|^2+1}{\|a\|^2}$.
Furthermore, let 
\[
u_l=(-a_{(k),l},0,\ldots,0,a_{(k),1},0,\ldots,0)
=(-a_{(k),l}e^k_1+a_{(k),1}e^k_l),
\]
where $e^k_l$ is the vector of length $k$ consisting of all zeros except 
entry number $l$ which equals one. We get that 
\[
Au_l^T=(-a_{(k),1} a_{(k),1}a_{(k),l}+a_{(k),1} a_{(k),l} a_{(k),1},
-a_{(k),2} a_{(k),1}a_{(k),l}+a_{(k),2} a_{(k),l} a_{(k),1},\ldots)^T
=0
\]
and so for every $u_l,$ $l=2,\ldots,k$ we have that $Au_l^T=0$ whence
\[
\left(I-\frac{1}{\|a\|^2}A\right)u_l^T=u_l^T,
\]
and so $u_l$ is an eigenvector corresponding to the eigenvalue 1.

Comparing this to \eqref{eqn:ellipseq} we see that if 
\[
\tilde{r}^2:=r^2-\|\puk\|^2+\frac{\langle \auk,\puk\rangle^2 }{\|\auk\|^2+1},
\]
then the lengths of the axes are given by 
\[
\left(\frac{\|a\|}{\sqrt{\|\auk\|^2+1}}
\tilde{r},\tilde{r},\ldots,\tilde{r}\right).
\]
\end{proof}

As an immediate corollary we obtain the volume and the diameter of the 
ellipsoid.
\begin{corollary}\label{corr:ellipsoidvolume}
Let $L \in \Line_{H_K^r}$. The volume of the ellipsoid $E_k(L,r)$ is given by
\[
\vol(E_k(L,r)) = 
\frac{\psi_{k+1}}{2 \pi}
\frac{\|a\|}{\sqrt{1+\|\auk\|^2}} 
\left( r^2-\|\puk\|^2 
+\frac{\langle \auk,\puk \rangle^2}{ 1+\|\auk\|^2} \right)^{k/2}.
\]
Moreover, the diameter is given by
\[
\diam(E_k(L,r))=2 \frac{ \|a\|}{\sqrt{1+\|\auk\|^2}} 
\left(r^2-\|\puk\|^2
+\frac{\langle\auk,\puk\rangle^2}{1+\|\auk\|^2} \right)^{1/2}.
\]
\end{corollary}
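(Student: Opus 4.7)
The plan is to read off the semi-axes of the ellipsoid $E_k(L,r)$ from Lemma \ref{lem:cylinderellipsoid} and apply the standard product formula for the volume of an ellipsoid. Set
\[
\tilde r^2 := r^2-\|\puk\|^2+\frac{\langle\auk,\puk\rangle^2}{1+\|\auk\|^2}.
\]
From the proof of Lemma \ref{lem:cylinderellipsoid}, the matrix $I-\|a\|^{-2}A$ has eigenvalue $(1+\|\auk\|^2)/\|a\|^2$ (with the one-dimensional eigenspace spanned by $\adk$) and eigenvalue $1$ with multiplicity $k-1$ (on the orthogonal complement of $\adk$ inside $H_k$). Consequently the semi-axes of the ellipsoid defined by~\eqref{eqn:ellipseq} are
\[
b_1=\frac{\|a\|}{\sqrt{1+\|\auk\|^2}}\,\tilde r,\qquad b_2=\cdots=b_k=\tilde r.
\]

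Now I would use the fact that the volume of a $k$-dimensional ellipsoid with semi-axes $b_1,\ldots,b_k$ equals $\ell_k(B^k(o,1))\prod_{i=1}^k b_i$. By \eqref{eqn:ballvolume}, $\ell_k(B^k(o,1))=\psi_{k-1}/k$, and by \eqref{eqn:psirecform} with $l=k$ we have $\psi_{k+1}=\tfrac{2\pi}{k}\psi_{k-1}$, so $\psi_{k-1}/k=\psi_{k+1}/(2\pi)$. Plugging in the semi-axes gives
\[
\vol(E_k(L,r))=\frac{\psi_{k+1}}{2\pi}\cdot\frac{\|a\|}{\sqrt{1+\|\auk\|^2}}\cdot\tilde r^{\,k},
\]
which is exactly the claimed formula.

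For the diameter, I would observe that since $\|a\|^2=1+\|\adk\|^2+\|\auk\|^2\ge 1+\|\auk\|^2$, we have $\|a\|/\sqrt{1+\|\auk\|^2}\ge 1$, so $b_1\ge b_2=\cdots=b_k$. The diameter of an ellipsoid is twice its largest semi-axis, hence
\[
\diam(E_k(L,r))=2b_1=2\,\frac{\|a\|}{\sqrt{1+\|\auk\|^2}}\,\tilde r,
\]
as claimed. There is no real obstacle here; the work is entirely bookkeeping once the eigenstructure from Lemma~\ref{lem:cylinderellipsoid} is recorded, and the only mildly subtle point is recognising that the first semi-axis is always the longest so that the diameter reduces to $2b_1$.
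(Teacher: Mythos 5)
Your proof is correct and follows essentially the same approach as the paper's: read off the semi-axis lengths from Lemma \ref{lem:cylinderellipsoid}, use the product formula $\vol = \ell_k(B^k(o,1))\prod_i b_i$, and note that the diameter is twice the longest semi-axis. The only cosmetic difference is that you spell out the identity $\ell_k(B^k(o,1)) = \psi_{k-1}/k = \psi_{k+1}/(2\pi)$ via \eqref{eqn:ballvolume} and \eqref{eqn:psirecform} and explicitly justify $b_1\ge b_j$, whereas the paper states these facts directly.
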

\begin{proof}
The proof is immediate since the volume of the ellipse
is given by $\ell_k(B^k(o,1)) \prod_{i=1}^k l_i,$ where $l_i$ is the 
length of axis number $i.$ Furthermore from Section \ref{sec:gennotation}
we have that $\ell_k(B^k(o,1))=\frac{\psi_{k+1}}{2 \pi}.$
Finally, the diameter is immediate from 
Lemma \ref{lem:cylinderellipsoid}.
\end{proof}

\subsection{The induced ellipsoid models} \label{sec:inducedmodel}
The aim of this subsection is to prove Theorem \ref{thm:cylellips},
i.e.\ to show that the random set 
${\mathcal C}\cap H_k$ can be generated (in law) using a Poisson 
process of ellipsoids in $\R^k$. Recall therefore the notation of 
Section \ref{sec:ellipsoids}.

Note that it follows from Lemma~\ref{lem:cylinderellipsoid} that for any 
$L=L(\adk,\auk,\pdk,\puk)\in \Line_{H_k^r}$ and $\pdk,$
\begin{equation}\label{eqn:pkindep}
E_k(L(\adk,\auk,\pdk,\puk),r)_o=E_k(L(\adk,\auk,{\bf 0}_k,\puk),r)_o,
\end{equation}
where ${\bf 0}_k$ is the zero-vector of length $k$.
For $k\in \{1,\ldots d-1\}$ and $r\in (0,1]$, define the measure 
$\mu_{k,r}$ on  $\mathfrak{E}^k$ by letting
\begin{equation}\label{eqn:ellipsmeasdef1}
\mu_{k,r}(\mathbf{E}) = \nu_d(L:E_k(L,r) \in \mathbf{E}),
\end{equation}
for any measurable ${\mathbf E} \subset \mathfrak{E}^k.$ 
According to Lemma~\ref{lem:cylinderellipsoid}, if 
$L=L(\adk,\auk,\pdk,\puk)\in {\mathrm A}(d,1)$, the cylinder $\mathfrak{c}(L,r)$ 
intersects $H_k$ if and only if 
\begin{equation}\label{eqn:hitcond}
\| \puk \|^2 -\frac{ \langle \auk,\puk \rangle^2}{1+\|\auk\|^2} < r^2.
\end{equation}
Note that when $k=d-1$ we have that $\puk$ and $\auk$ are empty,
and so the above expression is not defined. However, in this case
every cylinder intersects $H_{d-1}$ and therefore we simply interpret the
the condition to always be satisfied. (Of course, there is a
technical issue with lines parallel to the subspace $H_{d-1}$, but as explained
in Section \ref{sec:lincyl} these can be ignored).

By using \eqref{eqn:linmeasrep}, we see that we can write 
\eqref{eqn:ellipsmeasdef1} in the following way
\begin{equation}\label{eqn:ellipsintegral}
\mu_{k,r}(\mathbf{E})=\Upsilon_d \int_{L\,:\,E_k(L,r)\in {\mathbf E}} 
\indicator \left(\|\puk\|^2 
- \frac{\langle \auk, \puk \rangle^2 }{1 + \|\auk\|^2} < r^2\right) 
\frac{1}{\|a\|^{d+1}} \id \adk \id \auk \id \pdk \id \puk. 
\end{equation}
The measure $\mu_{k,r}$ is a measure on ellipsoids induced by the 
cylinder process with fixed radius $r.$ For the scale invariant 
process we define
\begin{equation}\label{eqn:fractalellipsmeasdef}
\mu_{k}(\mathbf{E}) =\int_{0}^1 \mu_{k,r}(\mathbf{E}) \id \varrho_s( r).
\end{equation}
As noted in Section \ref{sec:ellipsoids}, an ellipsoid $E$ is uniquely 
determined by the pair $(E_o,\cent(E))$. We will show that $\mu_k$ and 
$\mu_{k,r}$ can be written as product measures, where one factor is a 
measure on $\mathfrak{E}^k_o$ (corresponding to shapes of ellipsoids) and 
the other factor is a measure on $\R^k$ (corresponding to centres 
of ellipsoids).

To this end, we define the measures $\xi_{k,r}$ and $\xi_k$ on 
$\mathfrak{E}^k_o$ by 
\begin{equation}\label{eqn:xikrdef}
\xi_{k,r}(\cdot) = \Upsilon_d \int_{(E_k(L,r))_o\in \cdot } 
\indicator\left(\|\puk\|^2 - \frac{\langle \auk, \puk \rangle^2 }
{1 + \|\auk\|^2} 
< r^2\right)  \frac{1}{\|a\|^{d+1}} \id \adk \id \auk \id \puk
\end{equation}
and
\begin{equation}\label{eqn:xikdef}
\xi_{k}(\cdot)=\int_0^1 \xi_{k,r} (\cdot)\id \varrho_s ( r).
\end{equation}
Observe that $\mu_{k,r}$ depends on $d$ through the constant $\Upsilon_d$
and through  $\|a\|^{-(d+1)}$ in the integrand.
Furthermore, $\mu_k$ depends on $d$ further through the use of the 
measure $\varrho_s$ (see \eqref{eqn:scaleinvariantmeasure}). 

\noindent

We will now turn to the proof of Theorem \ref{thm:cylellips}. 
Let $\omega$ be as before (i.e. chosen according to 
${\mathbb P}_{\lambda}$) and define
\[
\omega_e:=\sum_{(L,r)\in \omega} \delta_{(\cent(E_k(L,r)),E_k(L,r)_o)}.
\]

\begin{proof}[Proof of Theorem \ref{thm:cylellips}]
We will only prove \eqref{eqn:measdecomp2}, as \eqref{eqn:measdecomp1}
follows in the same way by considering a Poisson cylinder process with
fixed radius $r$ (in place of the full fractal model).
We therefore need to prove that $\omega_e$ is a Poisson processes 
with intensity measure given by \eqref{eqn:measdecomp1}, 
and we start by showing that $\omega_e$ is a Poisson process. Suppose 
therefore that ${\mathbf E}_1, {\mathbf E}_2 \subset \FE^k=\BR^k \times \FE^k_o$ 
are disjoint collections of ellipsoids. Then, $E_k^{-1}({\mathbf E}_1)$ 
and $E_k^{-1}({\mathbf E}_2)$ are disjoint subsets of ${\mathrm A}(d,1)\times (0,1]$, 
and since $\omega_e({\mathbf E}_i)=\omega(E_k^{-1}({\mathbf E}_i))$
we see that $\omega_e$ is indeed a Poisson process on $\FE^k$.
Furthermore, the mean of $\omega_e({\mathbf E}_i)$ equals 
$\lambda \mu_k({\mathbf E}_i)$.

Our next step is to prove that $\mu_k$ equals
$\ell_k \times \xi_{k,r}.$
Consider any measurable sets $A\subset \BR^k$ and 
${\mathbf E}_o \subset \FE_o^k,$ and let 
\[
{\mathbf E}(A,{\mathbf E}_o)
=\{E\in \FE^k: E=(x,E_o) \textrm{ for some } x\in A
\textrm{ and } E_o \in {\mathbf E}_o\}\subset \FE^k.
\]
The statement follows if we can show that for any $A$ and 
${\mathbf E}_o$ as above we have that
\[
\mu_k({\mathbf E}(A,{\mathbf E}_o))
= \ell_k(A)\xi_k({\mathbf E}_o).
\]
To this end, observe that by \eqref{eqn:ellipsintegral} and 
\eqref{eqn:fractalellipsmeasdef},
\begin{eqnarray*}
\lefteqn{\mu_k({\mathbf E}(A,{\mathbf E}_o))
=\int_{0}^1\nu_d(L:\,\cent(E_k(L,r))\in A,\,E_k(L,r)_o\in {\mathbf E}_o)
\id \varrho_s(r)}\\
& & =\Upsilon_d \int_{ \substack{ E_k(L,r)_o\in {\mathbf E}_o \\ 
\mathrm{cent}(E_k(L,r) )\in A \\0<r<1} } \indicator
\left(\|\puk\|^2 - \frac{\langle \auk, \puk \rangle^2 }{1 + \|\auk\|^2} 
< r^2\right) \\
& & \hspace{50mm} \times
\frac{1}{\|a\|^{d+1}}\id \adk \id \auk \id \pdk\id \puk \id \varrho_s(r) \\
& & 
=\Upsilon_d \int_{\substack{E_k(L,r)_o\in {\mathbf E}_o \\0<r<1}} \indicator
\left(\|\puk\|^2-\frac{\langle \auk, \puk \rangle^2 }{1 + \|\auk\|^2} 
< r^2\right)  \int_{ \mathrm{cent}(E_k(L,r) )\in A} \id \pdk \\ & & 
\hspace{50mm} \times \frac{1}{\|a\|^{d+1}}\id \adk \id \auk \id \puk \id \varrho_s( r) \\
&& =\ell_k(A)\xi_k({\mathbf E}_o),
\end{eqnarray*}
where we used~\eqref{eqn:pkindep} in the third equality and~\eqref{eqn:centerequation} in the fourth equality.
\end{proof}

Our next step is to couple the fractal ellipsoid model with a fractal ball model. This fractal ball model will be of use in the upcoming sections, and in particular for the proof of Theorem \ref{thm:connectivitytranstition}. Informally, for any 
fixed ellipsoid in the ellipsoid process we shall replace it with a 
ball centred in the same point and with the same diameter as the 
replaced ellipsoid. More precisely, we will consider 
\[
\omega_b:=\sum_{(L,r)\in \omega} \delta_{(\cent(E_k(L,r)),\diam(E_k(L,r))/2)},
\]
and then let 
\[
\CV_b:=\BR^k \setminus \bigcup_{(x,R)\in \omega_b} B^k(x,R),
\]
where we recall the notation $B^k(x,R)$ from Section~\ref{sec:gennotation}. The fact that the ball $B^k(x,R)$ is closed (see Section \ref{sec:gennotation}) 
rather than open will not be important.
Obviously it follows that 
\[
\CV_b \subset \CV \cap H_k,
\]
since every ellipsoid is replaced by a larger ball.
Let 
\begin{equation}\label{eqn:gdefin}
g(\cdot)=\xi_k(E\,:\diam(E)/2 \in \cdot).
\end{equation}
We have the following theorem which is an analogue of Theorem 
\ref{thm:cylellips}. Since the proof is almost identical, we only 
give the statement. The reason for using $\diam(E)/2$ (rather than just
$\diam(E)$) in the definition of $g,$ stems from the fact that it 
is customary to specify a ball by using the radius rather than the diameter.

\begin{theorem}\label{thm:ballfrac}
For $k\in \{1,\ldots,d-1\}$, $\omega_b$ is a Poisson process on 
$\BR^k \times \BR_+$ with intensity measure 
\[
\lambda \ell_k \times g.
\]
\end{theorem}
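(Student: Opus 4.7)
The plan is to obtain Theorem~\ref{thm:ballfrac} as a direct pushforward of Theorem~\ref{thm:cylellips}, so the argument will be largely a bookkeeping exercise once the right measurable map is identified. First, I would note that $\omega_b = T_\ast \omega_e$, where
\[
T: \FE^k \to \R^k\times \R_+,\qquad T(x,E_o) = \bigl(x,\,\diam(E_o)/2\bigr).
\]
The formulas in Lemma~\ref{lem:cylinderellipsoid} and Corollary~\ref{corr:ellipsoidvolume} show that $T$ is measurable (the diameter depends continuously on the defining parameters of the ellipsoid), and hence the composition $(L,r)\mapsto T(\cent(E_k(L,r)),E_k(L,r)_o)$ is measurable on $\mathrm{A}(d,1)\times(0,1]$.

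Next, I would invoke the standard mapping theorem for Poisson point processes: since $\omega_e$ is a Poisson process on $\FE^k$ with intensity $\lambda\,\ell_k\times \xi_k$ by Theorem~\ref{thm:cylellips}, its image $\omega_b = T_\ast \omega_e$ is a Poisson process on $\R^k\times \R_+$ with intensity $T_\ast(\lambda\,\ell_k\times \xi_k)$. To identify this pushforward, take any measurable rectangle $A\times B \subset \R^k\times\R_+$. Since $T$ preserves the first coordinate,
\[
T^{-1}(A\times B) = A\times \{E_o\in \FE_o^k : \diam(E_o)/2 \in B\},
\]
so by Fubini and the definition~\eqref{eqn:gdefin} of $g$,
\[
T_\ast(\lambda\,\ell_k\times \xi_k)(A\times B)
= \lambda\,\ell_k(A)\,\xi_k\bigl(\{E_o:\diam(E_o)/2\in B\}\bigr)
= \lambda\,\ell_k(A)\,g(B).
\]
Since such rectangles generate the product $\sigma$-algebra, this determines the intensity measure to be $\lambda\,\ell_k\times g$, as claimed.

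If one prefers to avoid quoting a mapping theorem, the alternative is to repeat verbatim the two-step argument used for Theorem~\ref{thm:cylellips}: disjoint Borel sets $B_1,B_2\subset \R^k\times \R_+$ pull back to disjoint subsets of $\mathrm{A}(d,1)\times(0,1]$ under the combined map, which gives independence and Poisson marginals for $\omega_b(B_1)$ and $\omega_b(B_2)$; then the intensity is computed by the same integral manipulation. I do not foresee any real obstacle here — the genuine content has already been done in Theorem~\ref{thm:cylellips}, and the only thing one must check is that the diameter map is measurable and that it acts trivially on the first (centre) coordinate, so the product structure $\ell_k\times(\cdot)$ is preserved in the pushforward. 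This is exactly why the authors remark that the proof is ``almost identical'' and only state the result.
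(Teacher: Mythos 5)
Your proposal is correct. Its primary route is genuinely different from the one the paper's ``almost identical'' remark points to: instead of rerunning the Theorem~\ref{thm:cylellips} argument with the composite map $(L,r)\mapsto\bigl(\cent(E_k(L,r)),\diam(E_k(L,r))/2\bigr)$ replacing $(L,r)\mapsto\bigl(\cent(E_k(L,r)),E_k(L,r)_o\bigr)$, you realise $\omega_b$ as a further pushforward of the already-constructed ellipsoid process $\omega_e$ under the centre-preserving map $T(x,E_o)=(x,\diam(E_o)/2)$ and invoke the mapping theorem. That buys you a shorter argument, and the product form $\ell_k\times g$ of the image intensity drops out immediately because $T$ is the identity in the first coordinate. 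The direct route you sketch at the end, which re-checks that disjoint Borel subsets of $\R^k\times\R_+$ pull back to disjoint subsets of $\mathrm{A}(d,1)\times(0,1]$ and then recomputes the intensity, is closer to what the authors have in mind; both work, and the underlying computation $T^{-1}(A\times B)=A\times\{E_o:\diam(E_o)/2\in B\}$ followed by \eqref{eqn:gdefin} is identical. One caveat, inherited from the theorem statement rather than introduced by your proof: the mapping theorem in its usual form asks for a $\sigma$-finite image intensity, and for $k\ge(d+1)/2$ Lemma~\ref{lem:diameterlemma2} gives $\xi_k(E:\diam(E)\ge\tau)=\infty$ for every $\tau>0$, so $g$ assigns infinite mass to bounded intervals and $\omega_b$ fails to be locally finite. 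For $k\le d/2$ (the only regime used in Section~\ref{sec:connectivitydomination}) everything is clean, so this does not affect the substance of your argument.
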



\section{Analysis of the ellipsoid model} \label{sec:anaofellips}
The purpose of this section is mainly to prepare the grounds for
the proof (see Section \ref{sec:connectivitydomination}) of 
Theorem \ref{thm:connectivitytranstition}. This theorem 
will be proved by comparing the ellipsoid model on $H_k$ with the fractal 
ball model described at the end of Section \ref{sec:inducedmodel}. 
Then, we will relate this induced ball model to classical results about fractal 
ball models (see for instance \cite{MR_1996} Section 8, and \cite{BC_2010}). However, in 
order to obtain useful results when performing this comparison, we first 
need a better understanding of the induced processes. 

In Subsection \ref{sec:prelcalc} we perform some preliminary calculations
that will be useful to appeal to in further subsections. Then, Subsection
\ref{sec:ellipsestimates} establishes some basic properties of 
the measure $\xi_{k,r}$ when $r>0$. Finally, in Subsection \ref{sec:diam}
we focus on the distribution of the diameters of the induced ellipsoid 
process, since that is what will be useful when studying the induced 
ball model.

We point out that in the end we will not use the full results that we obtain 
below. For instance, it would be sufficient to restrict our attention to the 
case of $H_2$ when studying the connectivity phase transition (i.e. when
proving Theorem \ref{thm:connectivitytranstition}). However, we still chose to 
study the properties of the induced ellipsoid process in its generality for 
several reasons. Firstly, we do not believe that the general case 
makes the arguments more complicated or much longer. Secondly, for any 
eventual future projects it will be useful to have the full results 
in hand. Finally, in Section~\ref{sec:altpcproof} we demonstrate how to use our results in order to obtain an alternative proof of a result from \cite{TW_2012} concerning the standard Poisson cylinder model.

\subsection{Preliminary calculations} \label{sec:prelcalc}
We will frequently deal with integrals over spheres and will
repeatedly use the following coordinate change (see p.7 in \cite{AH_2012}). 
Recall that $e_1,...,e_d$ is the standard basis on $\R^d$. If $d\ge 3$, let $\phi \in \partial B^{d}(o,1)$ be 
written as 
\begin{equation}
\phi = s e_ d + \sqrt{1-s^2} u,\ s \in[-1,1], u\in \partial B^{d-1}(o,1).
\end{equation}
The corresponding surface element $\id \sigma_{d-1} (\phi)$  
is then 
\begin{equation}\label{eqn:coordinatechange}
\id\sigma_{d-1} (\phi) 
= (1-s^2)^{\frac{d-3}{2}} \id s \id \ell_{d-2}(u).
\end{equation}

We will use this change of coordinates when we need to perform integrals
over spheres. One integral will in particular surface in many places, and
so we study it separately in the following lemma. Recall the notation 
$\psi_m=\ell_m(\partial B^{m+1}(o,1))$ from Section \ref{sec:lincyl}.
It will be convenient to write $\CS^{d-1}=\partial B^{d}(o,1)$ 
so that $\psi_m=\ell_m(\CS^{m})$ here.

\begin{lemma}\label{lem:integralcomp}
Suppose that $k \in \{2,..., d-3\}$ and let 
\begin{equation}\label{eqn:i2}
I_{d,k}
:=\int_{(\CS^{d-k-2})^2}\int_{0}^{\infty} \frac{\kappa^{d-k-2}}{1+\kappa^2}
\frac{\id \kappa \id \ell_{d-k-2}(  \phi') \id \ell_{d-k-2}( \phi )}
{(1+(1-\langle \phi',\phi \rangle^2)\kappa^2)^{(d-k-1)/2}}.
\end{equation}
We have that 
\begin{equation}\label{eqn:i3}
I_{d,k} =\frac{ \psi_{d-k-1} \psi_{d-k-2}}{2}.
\end{equation}
\end{lemma}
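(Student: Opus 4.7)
The plan is to exploit rotational invariance to collapse one of the two spherical integrals, reduce the remaining angular integral by a clean substitution, and finish with a Beta-function identity together with \eqref{eqn:psirecform}. Throughout, set $m:=d-k-1\ge 2$, and observe that the integrand depends on $(\phi,\phi')$ only through the inner product $\langle\phi,\phi'\rangle$.

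First I would use the rotational invariance of $\ell_{m-1}$ on $\CS^{m-1}$: integrating $\phi$ out contributes a factor $\psi_{m-1}$, and in the remainder one may fix $\phi=e_m$. Writing $\phi'=se_m+\sqrt{1-s^2}\,u$ with $u\in\CS^{m-2}$, the coordinate change \eqref{eqn:coordinatechange} (with $d$ replaced by $m$) gives $d\ell_{m-1}(\phi')=(1-s^2)^{(m-3)/2}\,ds\,d\ell_{m-2}(u)$. Since nothing depends on $u$, integrating $u$ out produces another $\psi_{m-2}$, and then the substitution $s=\sin\theta$ turns $I_{d,k}$ into $\psi_{m-1}\psi_{m-2}\,K$, where
\begin{equation*}
K=\int_{-\pi/2}^{\pi/2}\!\!\int_0^\infty \frac{\kappa^{m-1}\cos^{m-2}\theta}{(1+\kappa^2)(1+\kappa^2\cos^2\theta)^{m/2}}\,d\kappa\,d\theta.
\end{equation*}

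The key step is the substitution $y=\kappa\cos\theta$ for fixed $\theta\in(-\pi/2,\pi/2)$. This sends $1+\kappa^2\cos^2\theta$ to $1+y^2$ and $1+\kappa^2$ to $(\cos^2\theta+y^2)/\cos^2\theta$, and the two powers of $\cos\theta$ cancel exactly, so by Fubini
\begin{equation*}
K=\int_0^\infty \frac{y^{m-1}}{(1+y^2)^{m/2}}\left(\int_{-\pi/2}^{\pi/2}\frac{d\theta}{\cos^2\theta+y^2}\right)dy.
\end{equation*}
Under $t=\tan\theta$ the inner integral becomes $\int_\BR dt/(1+y^2+y^2t^2)=\pi/(y\sqrt{1+y^2})$, so $K=\pi\int_0^\infty y^{m-2}(1+y^2)^{-(m+1)/2}\,dy$. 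Substituting $u=y^2$ identifies this as $\tfrac{\pi}{2}B\!\left(\tfrac{m-1}{2},1\right)=\pi/(m-1)$, using $B(p,1)=1/p$.

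Finally, \eqref{eqn:psirecform} gives $\psi_m=\frac{2\pi}{m-1}\psi_{m-2}$, hence $\pi\psi_{m-2}/(m-1)=\psi_m/2$, and therefore $I_{d,k}=\psi_{m-1}\psi_m/2$, which is \eqref{eqn:i3} after restoring $m=d-k-1$. The only non-mechanical step is spotting that $y=\kappa\cos\theta$ forces the cancellation of the $\cos^{m-2}\theta$ factors and decouples $\theta$ from $y$; the rest is standard calculus, and no delicacy arises at $\theta=\pm\pi/2$ since those values form a null set.
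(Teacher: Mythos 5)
Your proof is correct, and the route is essentially the one the paper takes: integrate one sphere out by rotational invariance, reduce the other via the coordinate change \eqref{eqn:coordinatechange} together with $s=\sin\theta$, rescale $\kappa$ to decouple it from the angle, and then evaluate the resulting angular integral $\int_{-\pi/2}^{\pi/2}(\cos^2\theta+a^2)^{-1}\,\id\theta=\pi/(a\sqrt{1+a^2})$ and a one-variable integral, finishing with \eqref{eqn:psirecform}. The differences are cosmetic: the paper rescales $\kappa$ \emph{before} reducing the spheres (writing $\kappa=t/\sqrt{1-\langle\phi',\phi\rangle^2}$), whereas you reduce the spheres first and then set $y=\kappa\cos\theta$, which is arguably tidier since the cancellation of the $\cos^{m-2}\theta$ factors is immediate; and you close with the Beta identity $B\bigl(\frac{m-1}{2},1\bigr)=\frac{2}{m-1}$ rather than an explicit antiderivative. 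One genuine (if small) improvement over the paper: you treat all $k\in\{2,\ldots,d-3\}$ in a single computation, whereas the paper splits off $k=d-3$ (that is, $m=2$) as a separate case on the grounds that \eqref{eqn:coordinatechange} ``cannot be used'' when the sphere is $\CS^1$. Your argument implicitly extends \eqref{eqn:coordinatechange} to $m=2$ by reading $\CS^{0}=\{\pm 1\}$ with $\ell_0$ the counting measure and $\psi_0=2$; this extension is correct and everything downstream (in particular $B(\frac{1}{2},1)=2$, so $K=\pi$) goes through, but you should flag it explicitly, since you cite \eqref{eqn:coordinatechange} ``with $d$ replaced by $m$'' and the paper asserts that formula only for ambient dimension at least $3$.
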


\begin{proof}
First, we assume that $2\le k\le d-4$. By the change of variable $\kappa=t/\sqrt{1-\langle \phi',\phi \rangle^2}$ and 
straightforward calculations it follows that 
\begin{equation}\label{eqn:i1}
I_{d,k}= \int_{(\CS^{d-k-2})^2 } \int_0^\infty 
\frac{ t^{d-k-2}}{(1+t^2)^{\frac{d-k-1}{2} } } 
\frac{1}{1 -\langle  \phi',\phi \rangle^2 +t^2 } 
\frac{\id t \id \ell_{d-k-2}(  \phi') \id \ell_{d-k-2}( \phi )}
{\left( 1- \langle  \phi', \phi \rangle^2 \right) ^{\frac{d-k-3}{2}} }, 
\end{equation}
and we now calculate~\eqref{eqn:i1}. Since this integral only depends on 
$\phi,\phi'$ through the inner-product, we can simply let 
$\phi = e_ {d-k-1}$ so that
\[
I_{d,k}=\int_{\CS^{d-k-2} } \int_0^\infty \frac{t^{d-k-2}}{(1+t^2)^{(d-k-1)/2} }
\frac{\ell_{d-k-2}(\CS^{d-k-2})}{1-\langle \phi',  e_ {d-k-1} \rangle^2 +t^2 } 
\frac{\id t  \id \ell_{d-k-2}( \phi')}
{\left( 1-\langle \phi',  e_ {d-k-1} \rangle^2  \right)^{(d-k-3)/2} }  .
\]
Since we want to integrate over $\CS^{d-k-2}=\partial B^{d-k-1}(o,1)$ and $d-k-1\ge 3$, we can now we apply the coordinate change of Equation \eqref{eqn:coordinatechange} 
where we write 
\[
\phi' = s e_{d-k-1} +\sqrt{1-s^2} u,\ s \in [-1,1], u \in \CS^{d-k-3},
\]
so that the surface measure now equals
\[
\id \ell_{d-k-2}(  \phi') 
= \left( 1-s^2 \right)^{ (d-k-4)/2} \id s \id \ell_{d-k-3}( u)
\]
where $\id \ell_{d-k-3}$ is the surface measure on $\CS^{d-k-3}$. 
We have that $\langle \phi',  e_ {d-k-1} \rangle^2=s^2$ so that
\begin{align*}
I_{d,k}& = \int_{\CS^{d-k-3} } \int_0^\infty \int_{s=-1}^1 \frac{t^{d-k-2}}{(1+t^2)^{(d-k-1)/2} }
\frac{\psi_{d-k-2}}{1-s^2 +t^2 } \frac{\left( 1-s^2 \right)^{ (d-k-4)/2} }{\left( 1-s^2  \right)^{(d-k-3)/2} }\id s \id t  \id \ell_{d-k-3}(u) \\
&= \psi_{d-k-2}\psi_{d-k-3} 
\int_0^\infty \int_{s=-1}^1 \frac{t^{d-k-2}}{(1+t^2)^{(d-k-1)/2}} \frac{1}{1-s^2 +t^2} \frac{1}{\sqrt{1-s^2}} \id s \id t. 
\end{align*}
Now let $\alpha=\sin^{-1}(s)$ for $s\in [-1,1]$. Then 
$\id s=\sqrt{1-s^2} \id \alpha$ and $1-s^2=\cos^2(\alpha)$ so that
\begin{align*}
I_{d,k}&=  \psi_{d-k-2}\psi_{d-k-3} \int_0^\infty \int_{ - \pi/2}^{\pi/2} 
\frac{t^{d-k-2} }{(1+t^2)^{(d-k-1)/2} } \frac{1}{\cos^2( \alpha) +t^2 } \id \alpha \id t \\ & 
= \psi_{d-k-2}\psi_{d-k-3}  \int_0^\infty \frac{t^{d-k-2} }{(1+t^2)^{(d-k-1)/2} }\left[\frac{1}{t \sqrt{1+t^2}}\arctan{\left(\frac{t}{\sqrt{1+t^2}}\tan{\alpha}\right)} \right]_{\alpha=-\pi/2}^{\pi/2}  \id t \\
&=  \psi_{d-k-2}\psi_{d-k-3} \pi \int_0^\infty \frac{t^{d-k-3} }{(1+t^2)^{(d-k)/2} } \id t \\ &=\psi_{d-k-2}\psi_{d-k-3} \pi
\left[\frac{t^{d-k-2}}{(d-k-2)(1+t^2)^{(d-k-2)/2}}\right]_0^\infty \\
& = \frac{\psi_{d-k-2}\psi_{d-k-3} \pi}{d-k-2}
=\frac{ \psi_{d-k-1} \psi_{d-k-2}}{2}.
 \end{align*}
Here, the second and fourth equality follows by standard methods and 
the last equality follows from \eqref{eqn:psirecform}.

Now assume that $k=d-3$. In this case, to calculate~\eqref{eqn:i2}, we need to integrate over $\CS^{1}=\partial B^2(o,1)$ so we cannot use the coordinate change in~\eqref{eqn:coordinatechange}.  Instead, we see that~\eqref{eqn:i2} becomes

\begin{eqnarray*}
\lefteqn{I_{d,d-3}  =\int_{(\CS^{1})^2}\int_{0}^{\infty} \frac{\kappa}{1+\kappa^2}
\frac{\id \kappa \id \ell_{1}(  \phi') \id \ell_{1}( \phi )}
{(1+(1-\langle \phi',\phi \rangle^2)\kappa^2)}} \\ && = \ell_{1}(\CS^{1})\int_{\CS^{1}}\int_{0}^{\infty} \frac{\kappa}{1+\kappa^2}
\frac{\id \kappa \id \ell_{1}(  \phi') } {(1+(1-\langle \phi',e_2 \rangle^2)\kappa^2)}. 
\end{eqnarray*}
Writing $\phi'=(\cos(\theta),\sin(\theta))$ we have $\id \ell_{1}(  \phi')=\id \theta$ and $\langle \phi',e_2 \rangle^2=\sin^2(\theta)$. Hence,

\begin{eqnarray*}
\lefteqn{I_{d,d-3}=\ell_{1}(\CS^{1})\int_{0}^{\infty}\frac{\kappa}{1+\kappa^2} \int_0^{2\pi} 
\frac{1}{1+\cos^2(\theta)\kappa^2}\id \theta \id \kappa} \\ && =2 \pi \ell_1(\CS^1) \int_{0}^{\infty} \frac{\kappa}{(1+\kappa^2)^{3/2}}\id \kappa =2\pi \ell_1(\CS^1) =\frac{\psi_2 \psi_1}{2},
\end{eqnarray*}
where the second and third equality follows from standard methods, and the fourth equality follows since $\psi_2/2=\pi^{3/2}/\Gamma(3/2)=\pi^{3/2}/(\pi^{1/2}/2)=2\pi$. Here, we used the formula~\eqref{eqn:psidef}. 
\end{proof}

In the next lemmas, we present two useful integrals which will be used frequently in what follows. Since these integrals can be found in standard tables, we omit the proofs.
\begin{lemma}\label{l:gammaintegral}
Suppose that $b>0$, $-2 s +t<-1$ and $t>-1$. Then
\begin{equation}\label{eqn:gammaintegral}
\int_{0}^{\infty} (x^2+b)^{-s} x^t \id x 
=b^{(-2 s+t+1)/2} \frac{\psi_{2s-1}}{\psi_t \psi_{2s-t-2}}.
\end{equation}
\end{lemma}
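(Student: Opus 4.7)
The plan is to reduce the stated integral to a classical Beta integral by a single substitution, evaluate it in closed form using Gamma functions, and then rewrite the result in terms of the $\psi_m$ notation defined in \eqref{eqn:psidef}.

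First, I would substitute $u = x^2/b$, so that $x = b^{1/2} u^{1/2}$ and $\id x = \tfrac{1}{2} b^{1/2} u^{-1/2} \id u$. The factors transform as $x^t = b^{t/2} u^{t/2}$ and $(x^2+b)^{-s} = b^{-s}(1+u)^{-s}$. Collecting powers of $b$ yields
\[
\int_{0}^{\infty} (x^2+b)^{-s} x^t \id x
= \tfrac{1}{2}\, b^{(-2s+t+1)/2} \int_0^\infty (1+u)^{-s} u^{(t-1)/2}\id u.
\]
The integrability hypotheses $t>-1$ and $-2s+t<-1$ ensure convergence at $0$ and $\infty$ respectively.

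Second, I would identify the remaining integral as a Beta function via the standard representation $B(p,q) = \int_0^\infty u^{p-1}(1+u)^{-(p+q)}\id u$ with $p = (t+1)/2$ and $q = s - (t+1)/2$. This gives
\[
\int_{0}^{\infty} (x^2+b)^{-s} x^t \id x
= \tfrac{1}{2}\, b^{(-2s+t+1)/2}\, \frac{\Gamma\!\left(\tfrac{t+1}{2}\right)\Gamma\!\left(s-\tfrac{t+1}{2}\right)}{\Gamma(s)}.
\]

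Third, I would translate this Gamma-function expression into the desired form using~\eqref{eqn:psidef}, namely $\psi_m = 2\pi^{(m+1)/2}/\Gamma((m+1)/2)$. Applied with $m = 2s-1$, $m=t$ and $m = 2s-t-2$ respectively, this yields $\psi_{2s-1} = 2\pi^s/\Gamma(s)$, $\psi_t = 2\pi^{(t+1)/2}/\Gamma((t+1)/2)$, and $\psi_{2s-t-2} = 2\pi^{s-(t+1)/2}/\Gamma(s-(t+1)/2)$. Taking the ratio $\psi_{2s-1}/(\psi_t\psi_{2s-t-2})$, the powers of $\pi$ cancel and one is left with exactly $\tfrac{1}{2}\Gamma((t+1)/2)\Gamma(s-(t+1)/2)/\Gamma(s)$, matching the expression above and completing the proof. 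There is no real obstacle here beyond careful bookkeeping of the Gamma/$\psi$ conversion; the whole argument is a routine change of variables plus the Beta integral, which is precisely why the authors delegate it to a table.
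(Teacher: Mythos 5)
Your proof is correct and complete: the substitution $u=x^2/b$, identification with the Beta integral $B\bigl(\tfrac{t+1}{2},\,s-\tfrac{t+1}{2}\bigr)$, and the Gamma-to-$\psi$ conversion via $\psi_m=2\pi^{(m+1)/2}/\Gamma((m+1)/2)$ all check out, and the hypotheses $t>-1$, $-2s+t<-1$ are exactly what is needed for the Beta integral to converge. The paper deliberately omits a proof (``can be found in standard tables''), so there is no in-paper argument to compare against; your write-up is precisely the routine derivation the authors are alluding to.
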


\begin{lemma}\label{l:gammaintegral2}
Suppose that $s,t>-1$. Then
\begin{equation}\label{eqn:gammaintegral2}
\int_{0}^1 (1-x^2)^s x^t \id t 
=\frac{\psi_{2s+t+2}}{\psi_{2s+1} \psi_{t}}.
\end{equation}
\end{lemma}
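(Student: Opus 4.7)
The plan is to recognize the integral on the left as a disguised Beta integral, evaluate it, and then check that the right-hand side matches after unpacking the definition of $\psi_m$. Note first that the differential should read $\id x$ rather than $\id t$ (the integrand depends only on $x$), and that the assumptions $s,t>-1$ are precisely what is needed for convergence at the endpoints $x=1$ and $x=0$ respectively.

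First I would perform the substitution $u=x^2$, so that $\id u = 2x\id x$ and $x^{t}\id x = \tfrac{1}{2} u^{(t-1)/2}\id u$. This converts the integral into
\[
\int_{0}^{1}(1-x^2)^s x^t \id x = \frac{1}{2}\int_{0}^{1}(1-u)^{s} u^{(t-1)/2}\id u = \frac{1}{2} B\!\left(s+1,\frac{t+1}{2}\right) = \frac{\Gamma(s+1)\,\Gamma((t+1)/2)}{2\,\Gamma(s+(t+3)/2)},
\]
using the standard Beta--Gamma identity.

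It then remains to verify that $\psi_{2s+t+2}/(\psi_{2s+1}\psi_{t})$ coincides with this expression. Using $\psi_m = 2\pi^{(m+1)/2}/\Gamma((m+1)/2)$ from \eqref{eqn:psidef} (extended to the real indices appearing here via the usual Gamma function), direct substitution yields
\[
\frac{\psi_{2s+t+2}}{\psi_{2s+1}\,\psi_{t}} = \frac{2\pi^{(2s+t+3)/2}/\Gamma((2s+t+3)/2)}{\bigl(2\pi^{s+1}/\Gamma(s+1)\bigr)\bigl(2\pi^{(t+1)/2}/\Gamma((t+1)/2)\bigr)}.
\]
The exponents of $\pi$ in numerator and denominator agree ($\tfrac{2s+t+3}{2} = (s+1)+\tfrac{t+1}{2}$), so the powers of $\pi$ cancel and one is left with exactly $\Gamma(s+1)\Gamma((t+1)/2)/(2\Gamma((2s+t+3)/2))$, matching the value of the integral computed above.

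There is no real obstacle here, as the argument is routine; the only thing worth being careful with is the bookkeeping of arguments of $\Gamma$ and of the exponents of $\pi$, and implicitly acknowledging that the $\psi_m$ notation is being used with non-integer $m$, which is consistent with how Lemmas \ref{l:gammaintegral} and \ref{l:gammaintegral2} are applied in Section~\ref{sec:prelcalc}.
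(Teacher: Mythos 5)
Your proof is correct. The paper itself omits the argument (stating only that the integral can be found in standard tables), so there is no authorial proof to compare against; your derivation via the substitution $u=x^2$, the Beta--Gamma identity, and unpacking $\psi_m = 2\pi^{(m+1)/2}/\Gamma((m+1)/2)$ from \eqref{eqn:psidef} is exactly the routine verification one would expect, and you correctly flag the typo ($\id t$ should be $\id x$) and the implicit extension of $\psi_m$ to non-integer indices.
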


\subsection{Total variation of the ellipsoid measures}
\label{sec:ellipsestimates}

The purpose of this subsection is to calculate the total variation of the measure $\xi_{k,r}$ for $r>0$.   First, we 
will describe some useful changes of coordinates which 
will simplify our calculations. These will also be used in later sections.

In order to perform our calculations we will need to differentiate between
three different cases, namely when $2\le k \leq d-3$, $k=d-2$ and $k=d-1.$
The first of these will be the most involved and it will be useful to 
introduce the following change of variables. First, let
\begin{equation}\label{eqn:transformation1}
T_1 : \ \ \left\{ 
\begin{array}{l}
( \rho,\theta):=(\|\adk \|,\adk/\|\adk \|)\in [0,\infty) \times \CS^{k-1} \\
( \kappa,\phi):=(\|\auk \|,\auk/\|\auk \|)\in [0,\infty) \times \CS^{d-k-2} \\
( \gamma,\varphi):=  (\|\puk \|,\puk/\|\puk \|)\in [0,\infty) \times \CS^{d-k-2},
\end{array}
\right.
\end{equation}
which transforms $\adk,\auk$ and $\puk$ into
a suitable set of spherical coordinates. Observe that if 
$k=d-2$ or $k=d-1$ then some of these changes does not really make sense.
This is the reason why we need to divide into several cases as mentioned
above. Our second transformation rescales the length $\|\puk\|=\gamma$ in 
terms of the length of $\| \auk\| = \kappa$ and the normalized angle 
between $\auk$ and $\puk$. This transformation depends on $r$ and so we 
emphasize this in the notation. Our second transformation is therefore
\begin{equation}\label{eqn:transformation2}
T_2^r: \ \ t:=\frac{\gamma}{r}
\left(\frac{ 1+ \kappa^2 }{1+ (1-\langle \varphi, \phi \rangle^2 ) \kappa^2 }
\right)^{-1/2}.
\end{equation}
We will of course make use of the composition $T^r :=  T_2^r \circ T_1.$
For clarity, we will write $\xi_{k,r}\circ T^r$ (or $\xi_k \circ T_1$ e.t.c.) 
when we work with the measure $\xi_{k,r}$ using these new coordinates.

Recalling the definition (i.e. \eqref{eqn:xikrdef}) of $\xi_{k,r}$, 
we see that 
\begin{eqnarray}\label{eqn:meastrans1}
\lefteqn{ \frac{\id (\xi_{k,r} \circ T_1) }{ \id \rho \id \kappa \id \gamma \id \ell_{k-1}(\theta) \id \ell_{d-k-2}(\phi) \id \ell_{d-k-2}(\varphi) }}\\
& & = \Upsilon_d \indicator \left( \gamma^2- \frac{ \gamma^2 \kappa^2 \langle \varphi,\phi  \rangle^2}{1+\kappa^2} < r^2  \right)\frac{ \rho^{k-1} ( \kappa \gamma)^{d-k-2} }{( \rho^2 + \kappa^2+ 1)^{(d+1)/2 } } \nonumber
\end{eqnarray}
and that
\begin{align}\label{eqn:meastrans2}
&\frac{\id (\xi_{k,r} \circ T^r)}{ \id \rho \id \kappa \id t \id \ell_{k-1}(\theta) \id \ell_{d-k-2}(\phi) \id \ell_{d-k-2}(\varphi)}\\ \nonumber & 
= \Upsilon_d \indicator \left(0 \leq t<1 \right) r^{d-k-1} \left( \frac{1+ \kappa^2  }{1+(1-\langle  \varphi,\phi \rangle^2) \kappa^2 }\right)^{(d-k-1)/2} \frac{ \rho^{k-1} ( \kappa t)^{d-k-2} }{( \rho^2 + \kappa^2 +1)^{(d+1)/2 } }.
\end{align}
It should be noted that the right hand side of \eqref{eqn:meastrans2}
depends on $r$ only through the factor $r^{d-k-1}$ and so
\begin{equation}\label{eqn:meashomogeneous}
\id (\xi_{k,r} \circ T^r) = r^{d-k-1} \id (\xi_{k,1} \circ T^1)
\end{equation}
(this can also been seen by using the scale invariance of the Poisson 
cylinder model).
Note also that both of the above Jacobians depends on $\varphi$ and $\phi$ 
only through the scalar product $\langle \varphi, \phi \rangle$.

Given $L\in \Line_{H_K^r}$ one can also check that under $T^r,$ the 
formulas for the volume and diameter of $E_k(L,r)$ from 
Corollary~\ref{corr:ellipsoidvolume} are given by (recall from Section 
\ref{sec:gennotation} that $\ell_k(B^k(o,1))=\psi_{k-1}/k=\psi_{k+1}/(2\pi)$)
\begin{equation}\label{eqn:vdtransformation}
\arraycolsep=1.4pt\def\arraystretch{2.2}
\begin{array}{rl}
\vol(E_k(L,r))\circ T^r ~&=\frac{\psi_{k-1}}{k} r^k \left( \frac{ \rho^2 + \kappa^2 +1}{\kappa^2 +1} \right)^{1/2} (1-t^2)^{k/2} ,\\
\diam(E_k(L,r))\circ T^r& = 2 r \left( \frac{ \rho^2 + \kappa^2 +1}{\kappa^2 +1} \right)^{1/2} (1-t^2)^{1/2}.
\end{array}
\end{equation}


\begin{remark}\label{rem:coords}
It should be remarked that one can make sense of this for $k=d-2,$ but 
as mentioned, the coordinate maps $T_1$ and $T_2^r$ have to change accordingly. 
More precisely, $T_1$ is now just the identity on $\auk,\puk$ since 
$\auk,\puk\in \R$ and moreover since 
$\langle \auk/|\auk|, \puk/|\puk| \rangle^2 = 1$ the denominator of 
$T_2^r$ vanishes. Additionally the expressions for the volume and 
diameter do not change but the Jacobian is changed slightly. The 
case $k=d-1$ is also different. In this case almost every line 
intersects $H_k$, so for example the indicator functions appearing 
in~\eqref{eqn:meastrans1} and~\eqref{eqn:meastrans2} disappear. 
\end{remark}

In the next lemma, we calculate the total mass (i.e. total variation) 
of $\xi_{k,r}$. 
\begin{lemma}\label{l:finite}
For $r\ge 0$, $d\ge 3$ and $2\le k\le d-1$, we have that the total
mass of $\xi_{k,r}$ is given by
\[
\|\xi_{k,r}\|_{TV} 
=\frac{r^{d-k-1}\psi_{d-k-1}}{\psi_{d-1}}.
\] 
\end{lemma}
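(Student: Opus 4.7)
The plan is to compute $\|\xi_{k,r}\|_{TV}$ by integrating out all variables in the density formula \eqref{eqn:meastrans2} obtained via the change of variables $T^r = T_2^r \circ T_1$, and then match the result with the claimed expression. I would treat the generic case $2 \leq k \leq d-3$ first using this change of variables, and handle the boundary cases $k = d-2$ and $k = d-1$ (where the $T_1$/$T_2^r$ coordinates partially degenerate, as indicated in Remark \ref{rem:coords}) separately by direct computation from \eqref{eqn:xikrdef}.

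In the main case, the integrand in \eqref{eqn:meastrans2} has a nice product structure, so I would integrate in stages: first over $\theta \in \CS^{k-1}$ (giving $\psi_{k-1}$), then over $t \in [0,1]$ (giving $1/(d-k-1)$ via $\int_0^1 t^{d-k-2}\,\mathrm{d}t$). Next, I would integrate over $\rho \in [0,\infty)$ using Lemma \ref{l:gammaintegral} with $b = \kappa^2+1$, $s = (d+1)/2$, $t = k-1$, which produces a factor $(\kappa^2+1)^{(k-d-1)/2}\,\psi_d/(\psi_{k-1}\psi_{d-k})$. The crucial observation is that this factor combines exactly with the $(1+\kappa^2)^{(d-k-1)/2}$ appearing in \eqref{eqn:meastrans2} to leave just $(1+\kappa^2)^{-1}$, and the remaining triple integral over $\kappa,\phi,\varphi$ is then precisely the quantity $I_{d,k}$ of Lemma \ref{lem:integralcomp}, which equals $\psi_{d-k-1}\psi_{d-k-2}/2$. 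Assembling everything with $\Upsilon_d = 4\pi/(\psi_d\psi_{d-1})$ yields
\[
\|\xi_{k,r}\|_{TV} = \frac{2\pi\, r^{d-k-1}\,\psi_{d-k-1}\,\psi_{d-k-2}}{(d-k-1)\,\psi_{d-1}\,\psi_{d-k}},
\]
and applying the recurrence \eqref{eqn:psirecform} at $l = d-k-1$, namely $\psi_{d-k} = \frac{2\pi}{d-k-1}\psi_{d-k-2}$, collapses this expression to $r^{d-k-1}\psi_{d-k-1}/\psi_{d-1}$.

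For $k = d-2$, the vectors $\auk,\puk$ are scalars, so I would work directly from \eqref{eqn:xikrdef}. The condition $\|\puk\|^2 - \langle\auk,\puk\rangle^2/(1+\|\auk\|^2) < r^2$ becomes $|p_{d-1}| < r\sqrt{1+a_{d-1}^2}$, so integrating out $p_{d-1}$ contributes $2r\sqrt{1+a_{d-1}^2}$. Then I would pass to spherical coordinates on $\adk \in \R^{d-2}$, integrate $\rho$ using Lemma \ref{l:gammaintegral}, and reduce everything to the elementary $\int_\R (1+a_{d-1}^2)^{-1}\,\mathrm{d}a_{d-1} = \pi$, which after collecting constants gives $r\psi_1/\psi_{d-1}$ as desired. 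For $k = d-1$, the indicator in \eqref{eqn:xikrdef} is vacuously satisfied and $\puk$, $\auk$ are empty, so one is left with $\Upsilon_d \int_{\R^{d-1}}(\|\adk\|^2+1)^{-(d+1)/2}\,\mathrm{d}\adk$, which a single application of Lemma \ref{l:gammaintegral} in spherical coordinates evaluates to $2/\psi_{d-1} = \psi_0/\psi_{d-1}$.

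I do not expect a substantive obstacle; the proof is a careful bookkeeping of $\psi$-factors together with the fortunate identification of the residual integral as $I_{d,k}$. The part where one must be most careful is checking that the exponents of $(1+\kappa^2)$ cancel exactly to $-1$ (so that the residue matches the integrand of $I_{d,k}$), and that the final collapse using the $\psi$-recurrence is applied with the right index.
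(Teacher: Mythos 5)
Your proof is correct and, for the main case $2\le k\le d-3$, follows essentially the same route as the paper: use the coordinate change $T^r$ to reach \eqref{eqn:meastrans2}, integrate out $\theta$ and $t$, apply Lemma~\ref{l:gammaintegral} in $\rho$ to see the $(1+\kappa^2)$ powers cancel to $-1$, recognize the remaining triple integral as $I_{d,k}$ from Lemma~\ref{lem:integralcomp}, and collapse the $\psi$-factors via \eqref{eqn:psirecform}. The one addition worth noting is that you actually carry out the $k=d-2$ and $k=d-1$ computations directly from \eqref{eqn:xikrdef} (and your answers $r\psi_1/\psi_{d-1}=2\pi r/\psi_{d-1}$ and $\psi_0/\psi_{d-1}=2/\psi_{d-1}$ both check out), whereas the paper only proves $k\le d-3$ explicitly and defers the boundary cases to the remarks about adjusting the coordinates; filling that gap is a genuine, if small, improvement in completeness.
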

\begin{proof}
We only prove the case $k \leq d-3$. The reason for this is that (as in Lemma~\ref{lem:integralcomp}) the cases $k=d-2$ and $k=d-1$ while somewhat easier, follow slightly different paths. The adjustments needed are outlined in Remark~\ref{rem:coords}.

We start by calculating
\begin{equation}\label{eqn:lemmafinite1}
\|\xi_{k,r}\|_{TV}=\Upsilon_d \int  \indicator
\left(r^2 -\|\puk\|^2 +\frac{ \langle \auk,\puk\rangle^2}{1+\|\auk\|^2 } > 0 \right)\frac{1}{\| a \|^{d+1}} \id \adk \id \auk \id \puk,
\end{equation}
where the integral is over $\adk\in \R^k$, $\auk \in \R^{d-k-1}$ and $\puk \in \R^{d-k-1}$.
Using~\eqref{eqn:meastrans2} we get that  \eqref{eqn:lemmafinite1} equals
\begin{align}\label{eqn:lemmafinite2}
 r^{d-k-1}\Upsilon_d& \int_{\substack{\rho,\kappa \geq 0\\ \varphi,\phi \in \CS^{d-k-2},\,\theta\in \CS^{k-1} } } 
\frac{ \rho^{k-1} \kappa^{d-k-2} }{(\rho^2+ \kappa^2 +1)^{(d+1)/2}} \\ \nonumber 
&  \hspace{5mm} \times \left(  \frac{1+ \kappa^2 }{1+(1-\langle \varphi,\phi \rangle^2) \kappa^2 } \right)^{(d-k-1)/2} \int_0^1 t^{d-k-2} \id t \id \rho \id \kappa \id \ell_{d-k-2}(\varphi) \id \ell_{d-k-2}(\phi) \id \ell_{k-1}(\theta) \\ \nonumber
&=\Upsilon_d\frac{r^{d-k-1} \psi_{k-1}}{d-k-1} \int_{ \substack{ \varphi, \phi \in \CS^{d-k-2},\\ \kappa,\rho \geq 0} }\frac{ \rho^{k-1} \kappa^{d-k-2} }{(\rho^2+ \kappa^2 +1)^{(d+1)/2}} \\ \nonumber &  \hspace{5mm} \times \left(  \frac{1+ \kappa^2 }{1+(1-\langle \varphi,\phi \rangle^2) \kappa^2 } \right)^{(d-k-1)/2} \id \rho \id \kappa \id \ell_{d-k-2}(\varphi) \id \ell_{d-k-2}(\phi),\end{align}
where the equality follows from performing the integration over $\theta$ and $t$.

Integrating with respect to $\rho$ we get by using \eqref{eqn:gammaintegral}
\[
\int_0^\infty \frac{ \rho^{k-1} }{( \rho^2 +\kappa^2 +1)^{(d+1)/2}} \id \rho 
=\frac{1}{(1+\kappa^2)^{(d-k+1)/2}}\frac{\psi_{d}}{\psi_{k-1}\psi_{d-k}}
\]
and so \eqref{eqn:lemmafinite2} equals
\begin{equation} \label{eqn:lemmafinite3}
\frac{r^{d-k-1}\psi_{k-1}}{d-k-1}  \frac{\psi_{d}}{\psi_{k-1}\psi_{d-k}} 
\Upsilon_d\int_{\substack{\varphi,\phi \in \CS^{d-k-2}\\ \kappa \ge 0} } 
\frac{ \kappa^{d-k-2} }{1+\kappa^2} \frac{\id \kappa \id \ell_{d-k-2}(\varphi) \id \ell_{d-k-2}(\phi)}
{ \left(  1+ \left( 1-\langle \varphi, \phi \rangle^2 \right)\kappa^2 \right)^{(d-k-1)/2} }.
\end{equation}
The integral on the right hand side is simply $I_{d,k}$ of Lemma
\ref{lem:integralcomp}. Therefore, by combining \eqref{eqn:lemmafinite1},
\eqref{eqn:lemmafinite2} and \eqref{eqn:lemmafinite3} with \eqref{eqn:i3}
we see that 
\begin{eqnarray*}
\lefteqn{\|\xi_{k,r}\|_{TV}
=\Upsilon_d\frac{r^{d-k-1}}{d-k-1}\frac{\psi_{d}}{\psi_{d-k}}
\frac{ \psi_{d-k-1} \psi_{d-k-2}}{2}}\\
& & =\frac{4 \pi}{\psi_d \psi_{d-1}}\frac{r^{d-k-1}\psi_{d}}{\psi_{d-k}}
\frac{\psi_{d-k}\psi_{d-k-1}}{4\pi}
=\frac{r^{d-k-1}\psi_{d-k-1}}{\psi_{d-1}}
\end{eqnarray*}
where we used \eqref{eqn:psirecform} in the second equality.

\end{proof}

\subsection{Properties of the diameter of random ellipsoids} \label{sec:diam}
By using Lemma \ref{l:finite} we see that
\[
\tilde{\xi}_{k,r}(\cdot):=\frac{1}{\|\xi_{k,r}\|_{TV}}\xi_{k,r}(\cdot),
\]
defines a probability measure.

The first goal of this section is to calculate the moments of $\diam(E)$ when $E$ is chosen according to $\tilde{\xi}_{k,r}$, see Lemma~\ref{lem:diametermoments2}. Then, we study the behavior of $\xi_{k}(E\,:\,\diam(E)\ge \tau)$ as a function of $\tau>0$.  The first such result is Lemma~\ref{lem:diameterlemma1}, which considers large values of $\tau$.  The second result is Lemma~\ref{lem:diam2} which holds for all  $\tau> 0$, but will only be used for small values of $\tau$.

\begin{lemma}\label{lem:diametermoments2}
Suppose that $d\ge 3$, $2\le k \le d-1$ and $n\in {\mathbb N}_+$. 
Then for any $r> 0$,
\begin{equation}
\E_{\tilde{\xi}_{k,r}}[\diam(E)^n]
=\left\{  
\begin{matrix}
2^n r^n \frac{2 \pi \psi_{d+n-k} \psi_{d-n}}{\psi_d \psi_{n+1} \psi_{d-n-k}} 
& \textrm{ if } n< d-k+1 \\
\infty & \textrm{ if } n\ge d-k+1.
\end{matrix}
\right.
\end{equation}
\end{lemma}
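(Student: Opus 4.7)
The plan is to compute the $n$-th moment directly via the change of variables $T^r = T_2^r \circ T_1$ described in \eqref{eqn:transformation1} and \eqref{eqn:transformation2}, combine it with the diameter formula \eqref{eqn:vdtransformation}, and identify the resulting iterated integral as a product of the pieces that already appeared in Lemma~\ref{l:finite}. Concretely, by \eqref{eqn:vdtransformation},
\[
\bigl(\diam E_k(L,r)\bigr)^n \circ T^r
= 2^n r^n \Bigl(\tfrac{\rho^2+\kappa^2+1}{\kappa^2+1}\Bigr)^{n/2}(1-t^2)^{n/2},
\]
so after multiplying by the Jacobian \eqref{eqn:meastrans2} we obtain an integrand in which the factor $(\rho^2+\kappa^2+1)^{n/2}$ simply reduces the $(\rho^2+\kappa^2+1)^{(d+1)/2}$ appearing in~\eqref{eqn:meastrans2} to exponent $(d+1-n)/2$, while the factor $(\kappa^2+1)^{-n/2}$ modifies the $(1+\kappa^2)^{(d-k-1)/2}$ to exponent $(d-k-n-1)/2$. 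The remaining integrand factorises neatly over the variables $\theta, t, \rho, (\kappa,\varphi,\phi)$.

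I would then carry out the integrations in the following order. The $\theta$-integration contributes $\psi_{k-1}$, and the $t$-integration becomes $\int_0^1 t^{d-k-2}(1-t^2)^{n/2}\,dt$, which is evaluated by Lemma~\ref{l:gammaintegral2} as $\psi_{n+d-k}/(\psi_{n+1}\psi_{d-k-2})$. The $\rho$-integration $\int_0^\infty \rho^{k-1}(\rho^2+\kappa^2+1)^{-(d+1-n)/2}\,d\rho$ is evaluated by Lemma~\ref{l:gammaintegral} and yields $(\kappa^2+1)^{(k+n-d-1)/2}\psi_{d-n}/(\psi_{k-1}\psi_{d-n-k})$; crucially, the convergence hypothesis $-2s+t<-1$ of Lemma~\ref{l:gammaintegral} becomes exactly $n<d-k+1$, which is the dichotomy in the statement. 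The factor $(\kappa^2+1)^{(k+n-d-1)/2}$ combines with $(1+\kappa^2)^{(d-k-n-1)/2}$ to give $(1+\kappa^2)^{-1}$, so the remaining $(\kappa,\varphi,\phi)$ triple integral is \emph{exactly} $I_{d,k}$ from Lemma~\ref{lem:integralcomp}, equal to $\psi_{d-k-1}\psi_{d-k-2}/2$.

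Collecting all the factors, dividing by $\|\xi_{k,r}\|_{TV}=r^{d-k-1}\psi_{d-k-1}/\psi_{d-1}$ from Lemma~\ref{l:finite}, and substituting $\Upsilon_d = 4\pi/(\psi_d\psi_{d-1})$, the $\psi_{k-1}, \psi_{d-k-2}, \psi_{d-k-1}, \psi_{d-1}$ and $r^{d-k-1}$ factors cancel, leaving precisely $2^n r^n\cdot 2\pi\psi_{d+n-k}\psi_{d-n}/(\psi_d\psi_{n+1}\psi_{d-n-k})$. For $n\ge d-k+1$ the $\rho$-integrand is of order $\rho^{k-1-(d+1-n)}=\rho^{n+k-d-2}$ at infinity, hence non-integrable; since every other factor in the integrand is non-negative, Tonelli immediately gives $\E_{\tilde{\xi}_{k,r}}[\diam(E)^n]=\infty$.

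The computation above is written for $2\le k\le d-3$; the endpoint cases $k=d-2$ and $k=d-1$ require the small modifications of the coordinate changes $T_1, T_2^r$ and of the Jacobian indicated in Remark~\ref{rem:coords} (for $k=d-2$ there is no sphere $\CS^{d-k-2}$ over $\varphi,\phi$, and for $k=d-1$ the variables $\auk,\puk$ disappear entirely and the indicator in \eqref{eqn:xikrdef} is trivially satisfied), but the structure is the same: a $\rho$-integral giving the dichotomy, a Beta-type $t$-integral, and angular integrations. The main technical nuisance is simply keeping track of the exponents of $(1+\kappa^2)$ and $(\rho^2+\kappa^2+1)$ so that they telescope into the form needed to invoke $I_{d,k}$; once that bookkeeping is done, the $\psi$-factor cancellation is automatic from $\Upsilon_d$ and Lemma~\ref{l:finite}.
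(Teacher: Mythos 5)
Your proposal is correct and follows essentially the same route as the paper's proof: change variables via $T^r$, substitute the diameter formula \eqref{eqn:vdtransformation}, integrate successively over $\theta$, $t$ (via Lemma~\ref{l:gammaintegral2}), and $\rho$ (via Lemma~\ref{l:gammaintegral}, whose convergence condition gives exactly the dichotomy $n<d-k+1$), observe the telescoping of the $(1+\kappa^2)$-exponents so the remaining $(\kappa,\varphi,\phi)$ integral is $I_{d,k}$, and then cancel $\psi$-factors against $\Upsilon_d$ and $\|\xi_{k,r}\|_{TV}$. The only cosmetic difference is that you spell out the Tonelli argument for the divergent case $n\ge d-k+1$, which the paper leaves implicit in~\eqref{eqn:twocases}.
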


\begin{proof}
For the same reasons as in Lemma~\ref{l:finite} we only give the proof for the case $k \le d-3$. Using~\eqref{eqn:meastrans2}
and~\eqref{eqn:vdtransformation}, we see that  
$\E_{\tilde{\xi}_{k,r}}[\diam(E)^n]$ equals 
\begin{align*}
\frac{\Upsilon_d}{\|\xi_{k,r}\|_{TV}}&\int_{\substack{0\le t< 1,\, \rho,\kappa\ge 0 \\ \theta\in \CS^{k-1},\,\phi,\varphi\in \CS^{d-k-2}} } (2  r)^n 
\left( \frac{ \rho^2 + \kappa^2 +1}{\kappa^2 +1} \right)^{n/2} \\
& \times (1-t^2)^{n/2}   r^{d-k-1} 
\left( \frac{1+ \kappa^2  }{1+(1-\langle  \varphi,\phi \rangle^2) \kappa^2 }\right)^{(d-k-1)/2} \\ 
&  \times
\frac{ \rho^{k-1} ( \kappa t)^{d-k-2} }{( \rho^2 + \kappa^2 +1)^{(d+1)/2 } } 
\id \rho \id \kappa \id t \id \ell_{k-1}(\theta) \id \ell_{d-k-2} (\phi) \id \ell_{d-k-2}(\varphi).
\end{align*}
After simplifying the integrand and integrating over $\theta$, we obtain
\begin{align}\label{eqn:diameterm1}
\E_{\tilde{\xi}_{k,r}}&[\diam(E)^n] \nonumber 
= \Upsilon_d \frac{2^n r^{d+n-k-1} \psi_{k-1}}{\|\xi_{k,r}\|_{TV}}\\&
\times 
\int_{\substack{\rho,\kappa\ge 0 \\ \phi,\varphi\in \CS^{d-k-2}} }   \frac{ (\rho^2 + \kappa^2 +1)^{(-d+n-1)/2} \rho^{k-1} \kappa ^{d-k-2} }{ (\kappa^2 +1)^{(-d+n+k+1)/2}(1+(1-\langle  \varphi,\phi \rangle^2) \kappa^2 )^{(d-k-1)/2}} \id \rho \id \kappa \id \ell_{d-k-2}(\phi) \id \ell_{d-k-2}(\varphi) \\ & 
\hspace{40mm}\times \int_{t=0}^1 (1-t^2)^{n/2} t^{d-k-2}\id t. \nonumber
\end{align}
The integral over $t$ is, using~\eqref{eqn:gammaintegral2},
\begin{equation}\label{eqn:tintegral1}
\int_{t=0}^1 (1-t^2)^{n/2} t^{d-k-2}\id t 
=\frac{\psi_{d+n-k}}{\psi_{n+1} \psi_{d-k-2}}.
\end{equation}
Moreover, for $\kappa \in [0,\infty)$, we have by 
\eqref{eqn:gammaintegral} that
\begin{equation}\label{eqn:twocases}
\int_{0}^{\infty}(\rho^2 + \kappa^2 +1)^{(-d+n-1)/2} 
\rho^{k-1}\id \rho =
\left\{
\begin{matrix}
\frac{\psi_{d-n}}{\psi_{k-1} \psi_{d-n-k}}(1+\kappa^2)^{(-d+n+k-1)/2}, & n<d-k+1 \\
\infty, & n\ge d-k+1.
\end{matrix}
\right.
\end{equation}
Hence, we get that in the case $n<d-k+1$,
\begin{align}\label{eqn:idkgamma}
\int_{\substack{\rho,\kappa\ge 0 \\ \phi,\varphi\in \CS^{d-k-2}} } &  \frac{ (\rho^2 + \kappa^2 +1)^{(-d+n-1)/2} \rho^{k-1} \kappa ^{d-k-2} }{ (\kappa^2 +1)^{(-d+n+k+1)/2}(1+(1-\langle  \varphi,\phi \rangle^2) \kappa^2 )^{(d-k-1)/2}} \id \rho \id \kappa \id \ell_{d-k-2}(\phi) \id \ell_{d-k-2}(\varphi)\nonumber \\ & 
= \frac{\psi_{d-n}}{\psi_{k-1} \psi_{d-n-k}} \int_{\substack{\kappa\ge 0 \\ \phi,\varphi\in \CS^{d-k-2}}}\frac{\kappa ^{d-k-2}}{(\kappa^2 +1)(1+(1-\langle  \varphi,\phi \rangle^2) \kappa^2 )^{(d-k-1)/2}} \id \kappa \id \ell_{d-k-2}(\phi) \id \ell_{d-k-2}(\varphi). \nonumber \\ & 
\end{align}
The integral on the right hand side is $I_{d,k}$ of 
Lemma \ref{lem:integralcomp}. Using \eqref{eqn:i3} we then obtain 
by plugging~\eqref{eqn:tintegral1} and~\eqref{eqn:idkgamma} 
into~\eqref{eqn:diameterm1} that
\begin{eqnarray*}
\lefteqn{\E_{\tilde{\xi}_{k,r}}[\diam(E)^n] 
=  \Upsilon_d \frac{2^n r^{d+n-k-1} \psi_{k-1}}{\|\xi_{k,r}\|_{TV}}  
 \frac{\psi_{d+n-k}}{\psi_{n+1} \psi_{d-k-2}}
 \frac{\psi_{d-n}}{\psi_{k-1} \psi_{d-n-k}}I_{d,k}}\\
& & =  \frac{4 \pi}{\psi_d \psi_{d-1}}
\frac{2^n r^{d+n-k-1} \psi_{k-1}\psi_{d-1}}{r^{d-k-1}\psi_{d-k-1}}  
 \frac{\psi_{d+n-k}}{\psi_{n+1} \psi_{d-k-2}}
 \frac{\psi_{d-n}}{\psi_{k-1} \psi_{d-n-k}}\frac{\psi_{d-k-1}\psi_{d-k-2}}{2}\\
& & =2^n r^n\frac{2 \pi \psi_{d+n-k} \psi_{d-n}}{\psi_d \psi_{n+1} \psi_{d-n-k}}.
\end{eqnarray*}

\end{proof}

For us, the most important case of Lemma~\ref{lem:diametermoments2} is 
when $n=k$. Then the lemma implies that
\begin{equation}
\E_{\tilde{\xi}_{k,r}}[\diam(E)^k]
=\left\{  
\begin{matrix}
 2^k r^k \frac{2 \pi \psi_{d-k}}{\psi_{k+1} \psi_{d-2k}}, & k< (d+1)/2 \\
\infty, & k\ge (d+1)/2.
\end{matrix}
\right.
\end{equation}

\begin{lemma}\label{lem:diameterlemma1}
Suppose that $d\ge 3$ and $2\le k \le d-1$.  There are constants $c_3,\ldots,c_6$ such that for all $\tau\ge 4$ and $r\ge 0$, 
\begin{equation}\label{eqn:derivativediamest}
-c_3  \tau^{-d+k-2} r^{2(d-k)} \le \frac{\id}{\id \tau} \xi_{k,r}(E\,:\,\diam(E)\ge \tau)\le -c_4 \tau^{-d+k-2} r^{2(d-k)},
\end{equation}
\begin{equation}\label{eqn:taildiamest}
c_5  \tau^{-d+k-1} r^{2(d-k)} \le \xi_{k,r}(E\,:\,\diam(E)\ge \tau)\le c_6 \tau^{-d+k-1} r^{2(d-k)}.
\end{equation}
\end{lemma}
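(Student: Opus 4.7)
The plan is to reduce to the case $r=1$ by scale invariance and then perform the change of variables $T^1$ so that the condition $\diam(E)\ge\tau$ becomes a one-sided bound on the radial coordinate $\rho$. Combining \eqref{eqn:meashomogeneous} with the expression for $\diam\circ T^r$ in \eqref{eqn:vdtransformation}---which is $r$ times a function of $(\rho,\kappa,t)$ alone---yields the scaling identity
\[
\xi_{k,r}(E:\diam(E)\ge\tau)\;=\;r^{d-k-1}\,\xi_{k,1}(E:\diam(E)\ge\tau/r).
\]
Setting $\sigma=\tau/r$, I am reduced to showing $\xi_{k,1}(E:\diam(E)\ge\sigma)\asymp\sigma^{-d+k-1}$ for $\sigma\ge 4$ (the regime relevant for the fractal process, where $r\in(0,1]$); the corresponding statement for $\tau$-derivatives then follows from the chain rule.

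Working in the $T^1$-coordinates, \eqref{eqn:vdtransformation} rewrites $\diam\ge\sigma$ as $\rho\ge\sqrt{(\kappa^2+1)B^2}$ with $B=B(\sigma,t):=\sqrt{\sigma^2/(4(1-t^2))-1}$. Substituting $\rho=\sqrt{\kappa^2+1}\,y$ in \eqref{eqn:meastrans2} collapses the $\rho$-integration to $G(B):=\int_B^\infty y^{k-1}(1+y^2)^{-(d+1)/2}\,dy$ and produces a factor $(\kappa^2+1)^{(k-d-1)/2}$; combined with the $(1+\kappa^2)^{(d-k-1)/2}$ already present in \eqref{eqn:meastrans2}, this becomes the clean factor $(1+\kappa^2)^{-1}$ that appears inside $I_{d,k}$ of Lemma~\ref{lem:integralcomp}. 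For $\sigma\ge 4$ a short calculation shows $B\ge\sqrt{3}$ uniformly in $t\in[0,1)$ and $B\asymp\sigma/\sqrt{1-t^2}$; comparing $(1+y^2)$ with $y^2$ then gives the two-sided estimate $G(B)\asymp B^{k-d-1}\asymp\sigma^{k-d-1}(1-t^2)^{(d-k+1)/2}$.

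Inserting this estimate back and integrating the remaining variables leads to a clean factorisation: the integral over $(\kappa,\phi,\varphi)$ is exactly $I_{d,k}=\psi_{d-k-1}\psi_{d-k-2}/2$ from Lemma~\ref{lem:integralcomp}; the $\theta$-integration yields $\psi_{k-1}$; and the $t$-integration $\int_0^1 t^{d-k-2}(1-t^2)^{(d-k+1)/2}\,dt$ is evaluated by Lemma~\ref{l:gammaintegral2}. Collecting these constants and undoing the scaling $\sigma=\tau/r$ delivers both sides of~\eqref{eqn:taildiamest}.

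For~\eqref{eqn:derivativediamest}, I differentiate the analogous integral expression for $\xi_{k,1}(\diam\ge\sigma)$ under the integral sign in $\sigma$. Only the lower limit $B$ of $G$ depends on $\sigma$, so the derivative of $G(B(\sigma,t))$ equals $-B^{k-1}(1+B^2)^{-(d+1)/2}\cdot\sigma/(4B(1-t^2))$, which by the same two-sided estimates for $B\ge\sqrt{3}$ is $\asymp -\sigma^{k-d-2}(1-t^2)^{(d-k+1)/2}$. The rest of the integrations proceed exactly as above and yield $(d/d\sigma)\xi_{k,1}(\diam\ge\sigma)\asymp-\sigma^{k-d-2}$; unscaling via $\partial_\tau\xi_{k,r}(\cdot)=r^{d-k-2}(d/d\sigma)\xi_{k,1}(\cdot)|_{\sigma=\tau/r}$ then gives \eqref{eqn:derivativediamest}. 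I expect the main obstacle to be purely bookkeeping---tracking exponents of $(\kappa^2+1)$ and $(1-t^2)$ through the cascade of integrations and verifying the two-sided $G(B)$ estimate is uniform in $t\in[0,1)$---with no real conceptual difficulty; the cases $k\in\{d-2,d-1\}$ require only the minor Jacobian adjustments noted in Remark~\ref{rem:coords}.
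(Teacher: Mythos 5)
Your proof is correct and follows essentially the same approach as the paper's: both rely on the coordinate representation \eqref{eqn:meastrans2} and the diameter formula in \eqref{eqn:vdtransformation}, reduce the $\rho$-integral to a one-sided tail whose lower endpoint encodes $\diam\ge\tau$, exploit $\tau\ge 4$ for the two-sided comparisons, and recognise $I_{d,k}$ from Lemma~\ref{lem:integralcomp} after integrating out the remaining angular variables. Your streamlining (reducing to $r=1$ via the scaling identity $\xi_{k,r}(\diam\ge\tau)=r^{d-k-1}\xi_{k,1}(\diam\ge\tau/r)$ and then substituting $\rho=\sqrt{\kappa^2+1}\,y$ to expose the $(1+\kappa^2)^{-1}$ factor cleanly) and the reversed order (tail bound first, then derivative, versus the paper's derivative first) are cosmetic simplifications, not a different route.
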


\begin{proof}

Again, as in Lemma~\ref{l:finite} we only give the proof for the case $k \le d-3$.

We first prove~\eqref{eqn:derivativediamest}, from 
which~\eqref{eqn:taildiamest} easily follows. Fix $r\in (0,1)$ and 
$\tau\ge 4$. Observe that by \eqref{eqn:xikrdef}, \eqref{eqn:meastrans2} and 
\eqref{eqn:vdtransformation} we have that 
\begin{align}\label{eqn:dde1}
\xi_{k,r}&(E\,:\,\diam(E)\ge \tau)\nonumber=\Upsilon_d r^{d-k-1}\int \left( \frac{1+ \kappa^2  }{1+(1-\langle  \varphi,\phi \rangle^2) \kappa^2 }\right)^{(d-k-1)/2} \\ & \times \int_{A_{\tau}} \frac{ \rho^{k-1} ( \kappa t)^{d-k-2} }{( \rho^2 + \kappa^2 +1)^{(d+1)/2 } } \id \rho \id \kappa \id t \id \ell_{k-1}(\theta) \id \ell_{d-k-2}(\phi) \id \ell_{d-k-2}(\varphi),
\end{align}
where $A_{\tau}=\left\{\rho\,:\,2r\left(\frac{\rho^2+\kappa^2+1}{\kappa^2+1}\right)^{1/2}(1-t^2)^{1/2}\ge\tau\right\}$ and the first integral is over $\kappa\ge 0,\,t\in (0,1),\,\theta\in \CS^{k-1}$ and $\phi,\varphi\in \CS^{d-k-2}$.  

Hence,
\begin{eqnarray}\label{eqn:dde2}
\lefteqn{\frac{\id}{\id \tau}\xi_{k,r}(E\,:\,\diam(E)\ge \tau) =\Upsilon_d r^{d-k-1}\int \left( \frac{1+ \kappa^2  }{1+(1-\langle  \varphi,\phi \rangle^2) \kappa^2 }\right)^{(d-k-1)/2}}\\  \nonumber && \times \frac{\id}{\id \tau}\int_{A_{\tau}} \frac{ \rho^{k-1} ( \kappa t)^{d-k-2} }{( \rho^2 + \kappa^2 +1)^{(d+1)/2 } } \id \rho \id \kappa \id t \id \ell_{k-1}(\theta) \id \ell_{d-k-2}(\phi) \id \ell_{d-k-2}(\varphi).
\end{eqnarray}
Now let $g(\tau,t,\kappa,r)=\sqrt{(1+\kappa^2)\left(\frac{\tau^2}{4 r^2 (1-t^2)}-1\right)}$. Observe that since $\tau \geq 4$ we have that
$\frac{\tau^2}{4 r^2 (1-t^2)}-1\geq 3$ and so $g(\tau,t,\kappa,r)$ is real.
In addition, let $f(\rho,\kappa)=\frac{ \rho^{k-1} }{( \rho^2 + \kappa^2 +1)^{(d+1)/2 }}$.  Then,
\[
\frac{\id}{\id \tau}\int_{A_{\tau}} \frac{ \rho^{k-1} }{( \rho^2 + \kappa^2 +1)^{(d+1)/2 } } \id \rho 
=  \frac{\id}{\id \tau} \int_{\rho=g(\tau,t,\kappa,r)}^{\infty} f(\rho,\kappa)\id \rho = -f(g(\tau,t,\kappa,r),\kappa) \frac{\id }{\id \tau} g(\tau,t,\kappa,r)
\]
We have that
\begin{eqnarray*}
\lefteqn{\frac{\id g(\tau,t,\kappa,r)}{\id \tau} =\frac{(1+\kappa^2)\tau}{2 r^2(1-t^2)}\frac{1}{2} \left((1+\kappa^2)\left(\frac{\tau^2}{4 r^2(1-t^2)}-1\right)\right)^{-1/2}} \\ && =\frac{\tau(1+\kappa^2)^{1/2}}{(4 r^2 (1-t^2))^{1/2}(\tau^2-4 r^2 (1-t^2))^{1/2}},
\end{eqnarray*}
and that
\begin{eqnarray*}
\lefteqn{f(g(\tau,t,\kappa,r),\kappa)=\frac{(1+\kappa^2)^{(k-1)/2}\left(\frac{\tau^2-4 r^2 (1-t^2)}{4 r^2 (1-t^2)}\right)^{(k-1)/2}}{\left(\frac{(1+\kappa^2)\tau^2}{4 r^2(1-t^2)}\right)^{(d+1)/2}}} \\ && 
= \tau^{-d-1} (1+\kappa^2)^{(-d+k-2)/2}(4 r^2 (1-t^2))^{(d-k+2)/2}(\tau^2-4 r^2 (1-t^2))^{(k-1)/2}.
\end{eqnarray*}
Hence,
\begin{eqnarray*}\label{eqn:fgder1}
\lefteqn{-f(g(\tau,t,\kappa,r),\kappa)\frac{\id }{\id \tau} g(\tau,t,\kappa,r) }
\\ && = -2^{d-k+1}\tau^{-d} (1+\kappa^2)^{(-d+k-1)/2} (r^2 (1-t^2))^{(d-k+1)/2}(\tau^2-4 r^2 (1-t^2))^{(k-2)/2}.
\end{eqnarray*}
For $\tau\ge 4$ we have (keeping in mind that $r,t\in (0,1)$),
\begin{equation}\label{eqn:fgder2}
\tau^2/2\le \tau^2-4 r^2 \left(1-t^2\right)\le \tau^2.
\end{equation}
We will use the notation $h_1 \asymp h_2$ whenever the two functions 
$h_1,h_2$ are such that $h_1(x)\leq c h_2(x)$ and $h_1(x)\geq c' h_2(x)$
for some constants $0<c,c'<\infty.$
We then see that
\begin{equation}\label{eqn:fgder4}
 -f(g(\tau,t,\kappa,r),\kappa) \frac{\id }{\id \tau} g(\tau,t,\kappa,r)
 \asymp -\tau^{-d+k-2} (1+\kappa^2)^{(-d+k-1)/2}r^{d-k+1}(1-t^2)^{(d-k+1)/2}.
\end{equation}
Inserting~\eqref{eqn:fgder4} into~\eqref{eqn:dde2} now gives, for $\tau\ge 4$,
\begin{eqnarray*}
\lefteqn{\frac{\id}{\id \tau}\xi_{k,r}(E\,:\,\diam(E)\ge \tau)}\nonumber\\ &&\asymp -\frac{r^{2(d-k)}}{\tau^{d-k+2}}\int \frac{ (1-t^2)^{(d-k+1)/2}(\kappa t)^{d-k-2}}{(1+(1-\langle  \varphi,\phi \rangle^2) \kappa^2)^{(d-k-1)/2}(1+ \kappa^2 )}  \id \kappa \id t \id \ell_{k-1}(\theta) \id \ell_{d-k-2}(\phi) \id \ell_{d-k-2}(\varphi) \\ && \asymp -\frac{r^{2(d-k)}}{\tau^{d-k+2}}\int \frac{ \kappa^{d-k-2}}{(1+(1-\langle  \varphi,\phi \rangle^2) \kappa^2)^{(d-k-1)/2}(1+ \kappa^2 )} \id \kappa \id \ell_{d-k-2}(\phi) \id \ell_{d-k-2}(\varphi) \\ &&= -I_{d,k} \frac{r^{2(d-k)}}{\tau^{d-k+2}}.
\end{eqnarray*}
where we use \eqref{eqn:i2} in the last equality.
This proves~\eqref{eqn:derivativediamest}. We now obtain~\eqref{eqn:taildiamest} using the identity
$$\xi_{k,r}(E\,:\,\diam(E)\ge \tau)=-\int_{\tau}^{\infty} \frac{\id}{\id s}\xi_{k,r}(E\,:\,\diam(E)\ge s)\id s.$$

\end{proof}

\begin{lemma}\label{lem:diameterlemma2}
Suppose that $d\ge 3$ and $2\le k \le d-1$. If $k\le d/2$, then there are 
constants $c_7,\ldots,c_{10}$ such that for all $\tau\ge 4$ and $r\ge 0$, 
\begin{equation}\label{eqn:derivativediamest2}
-c_7  \tau^{-d+k-2} \le \frac{\id}{\id \tau} \xi_{k}(E\,:\,\diam(E)\ge \tau)\le -c_8 \tau^{-d+k-2},
\end{equation}
\begin{equation}\label{eqn:taildiamest2}
c_9  \tau^{-d+k-1}\le \xi_{k}(E\,:\,\diam(E)\ge \tau)\le c_{10} \tau^{-d+k-1}.
\end{equation}
On the other hand, if $k\ge (d+1)/2$, then for all $\tau\ge 0$, 
\begin{equation}\label{eqn:infinitediamest}
\xi_{k}(E\,:\,\diam(E)\ge \tau)=\infty.
\end{equation}
\end{lemma}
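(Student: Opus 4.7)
The plan is to integrate the estimates from Lemma~\ref{lem:diameterlemma1} against the measure $\varrho_s$, since by definition
\[
\xi_k(E:\diam(E)\geq \tau) = \int_0^1 \xi_{k,r}(E:\diam(E)\geq \tau)\, r^{-d}\id r.
\]
For $\tau\geq 4$, Lemma~\ref{lem:diameterlemma1} gives $\xi_{k,r}(E:\diam(E)\geq\tau) \asymp \tau^{-d+k-1}r^{2(d-k)}$, so the inner integral becomes $\tau^{-d+k-1}\int_0^1 r^{d-2k}\id r$. This integral converges if and only if $d-2k>-1$, i.e.\ $k\leq d/2$, and in that case equals $(d-2k+1)^{-1}$, yielding~\eqref{eqn:taildiamest2} with explicit constants $c_9,c_{10}$ obtained from $c_5,c_6$.

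For~\eqref{eqn:derivativediamest2}, I would first justify that differentiation under the integral is permitted. Using~\eqref{eqn:derivativediamest}, the integrand $\big|\tfrac{\id}{\id\tau}\xi_{k,r}(E:\diam(E)\geq \tau)\big|r^{-d}$ is dominated on any interval $[\tau_0,\infty)$ with $\tau_0\geq 4$ by $c_3\tau_0^{-d+k-2}r^{d-2k}$, which is integrable in $r$ on $(0,1]$ whenever $k\leq d/2$. Dominated convergence then gives
\[
\frac{\id}{\id\tau}\xi_k(E:\diam(E)\geq\tau)
= \int_0^1 \frac{\id}{\id\tau}\xi_{k,r}(E:\diam(E)\geq\tau)\,r^{-d}\id r,
\]
and plugging the two-sided bound~\eqref{eqn:derivativediamest} into this identity produces~\eqref{eqn:derivativediamest2}.

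For the case $k\geq (d+1)/2$, i.e.\ $d-2k\leq -1$, the lower bound in~\eqref{eqn:taildiamest} applied for any fixed $\tau\geq 4$ gives
\[
\xi_k(E:\diam(E)\geq\tau)\geq c_5\tau^{-d+k-1}\int_0^1 r^{d-2k}\id r = \infty,
\]
since the exponent $d-2k$ is at most $-1$. Because $\tau\mapsto \xi_k(E:\diam(E)\geq\tau)$ is non-increasing, the value is also infinite for every $\tau\in[0,4]$, which yields~\eqref{eqn:infinitediamest}.

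The only real technicality is the interchange of $\tfrac{\id}{\id\tau}$ and the $r$-integral; everything else is a direct consequence of Lemma~\ref{lem:diameterlemma1} combined with the elementary fact that $\int_0^1 r^{d-2k}\id r$ is finite precisely when $k\leq d/2$. There is no new spherical integration to perform, so I do not anticipate a genuine obstacle beyond checking the domination hypothesis uniformly on compact $\tau$-intervals bounded away from $4$.
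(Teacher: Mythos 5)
Your proof is correct and follows essentially the same route as the paper: integrate the bounds from Lemma~\ref{lem:diameterlemma1} against $\varrho_s$ and note that $\int_0^1 r^{d-2k}\,\id r$ converges precisely when $k\le d/2$. The only minor differences are that you derive~\eqref{eqn:taildiamest2} directly from~\eqref{eqn:taildiamest} rather than by integrating~\eqref{eqn:derivativediamest2}, and you explicitly supply the dominated-convergence justification for differentiating under the integral sign, which the paper leaves implicit.
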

\begin{proof}
If $k\le d/2$ we get using \eqref{eqn:xikdef}, \eqref{eqn:scaleinvariantmeasure}
and \eqref{eqn:derivativediamest} that for $\tau\ge 4$,
\begin{eqnarray*}
\lefteqn{\frac{\id}{\id \tau} \xi_{k}(E\,:\,\diam(E)\ge \tau)}\\ && 
=\int_{0}^1\frac{\id}{\id \tau} \xi_{k,r}(E\,:\,\diam(E)\ge \tau) r^{-d} \id r 
\asymp -\tau^{-d+k-2} \int_{0}^1 r^{d-2k}\id r \asymp -\tau^{-d+k-2}.
\end{eqnarray*}
This proves~\eqref{eqn:derivativediamest2}, from which~\eqref{eqn:taildiamest2} 
follows easily. Next, if $k\ge (d+1)/2$, then
\begin{eqnarray*}
\lefteqn{\xi_{k}(E\,:\,\diam(E)\ge \tau)=\int_{0}^1 \xi_{k,r}(E\,:\,\diam(E)\ge \tau) r^{-d} \id r} \\ && 
\ge c \tau^{-d+k-1} \int_0^1 r^{2(d-k)} r^{-d} \id r 
=c \tau^{k-d-1} \int_0^1 r^{d-2k} \id r 
=\infty,
\end{eqnarray*}
where we used~\eqref{eqn:taildiamest} in the inequality.

\end{proof}

In the next lemma and its proof, we will use the notation 
\[
\xi_{k,r} \left( \diam(E)^k ; \diam(E) \geq \tau \right)=\|\xi_{k,r}\|_{TV}\BE_{\tilde{\xi}_{k,r}}[\diam(E)^k : \diam(E)\ge \tau].
\]

\begin{lemma}\label{lem:diam2}

Suppose that $d\ge 4$ and $2\le k \le d/2$. Then, for any $\tau>0$ we have
\begin{eqnarray}\label{eqn:fractaldiameter}
\nonumber
\lefteqn{\xi_k  (E\,:\,\diam(E)\ge \tau)} \\ && 
= \frac{1}{k \tau^k} \xi_{k,1} \left( \diam(E)^k ; \diam(E) \geq \tau \right)
-\frac{1}{k} \xi_{k,1} \left(E\,:\,  \diam( E ) \geq \tau \right).
\end{eqnarray}
Moreover,
\begin{equation}\label{eqn:fractaldiameterderivative}
\frac{\id}{\id \tau} \xi_k(E\,:\,\diam(E)\ge \tau)
=-\frac{1}{\tau^{k+1}}\xi_{k,1} \left( \diam(E)^k ; \diam(E) \geq \tau \right),
\end{equation}
and so for $0<a\le b,$
\begin{equation}\label{eqn:diameterinterval}
\xi_k(E\,:\,a\le \diam(E)\le b)
=\int_{a}^{b}\frac{1}{\tau^{k+1}}
\xi_{k,1} \left( \diam(E)^k ; \diam(E) \geq \tau \right) \id \tau.
\end{equation}
\end{lemma}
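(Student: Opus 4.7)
The plan is to reduce everything to the $r=1$ case via the scaling relation \eqref{eqn:meashomogeneous} together with the fact that the diameter scales linearly in $r$ under $T^r$ (see \eqref{eqn:vdtransformation}), and then perform a single integration by parts. First I would observe that \eqref{eqn:meashomogeneous} combined with the identity $\diam(E_k(L,r))\circ T^r = r\cdot\diam(E_k(L,1))\circ T^1$ (obtained by comparing the two formulas in \eqref{eqn:vdtransformation}) gives the basic scaling
\[
\xi_{k,r}(E:\diam(E)\ge \tau)=r^{d-k-1}\,\xi_{k,1}(E:\diam(E)\ge \tau/r),\qquad r>0,\,\tau>0.
\]

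Next I would insert this into the definition \eqref{eqn:xikdef} of $\xi_k$ and substitute $s=\tau/r$. Using $\id\varrho_s(r)=r^{-d}\id r$, the substitution turns $r^{-k-1}\,\id r$ into $\tau^{-k} s^{k-1}\,\id s$, and the limits $r\in(0,1]$ become $s\in[\tau,\infty)$, producing the clean representation
\[
\xi_k(E:\diam(E)\ge \tau)=\tau^{-k}\int_{\tau}^{\infty}s^{k-1}\,\xi_{k,1}(E:\diam(E)\ge s)\,\id s.
\]
This is the form I want, since \eqref{eqn:fractaldiameter} is exactly what results from integrating it by parts.

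With $F(s):=\xi_{k,1}(E:\diam(E)\ge s)$ and $v(s):=s^k/k$, integration by parts on $[\tau,M]$ followed by letting $M\to\infty$ gives
\[
\int_{\tau}^{\infty}s^{k-1}F(s)\,\id s=-\frac{\tau^k}{k}F(\tau)+\frac{1}{k}\int_{\tau}^{\infty}s^{k}\,\id(-F)(s),
\]
provided the boundary term $M^k F(M)/k$ vanishes as $M\to\infty$. This is the only delicate step, and it is exactly where the hypothesis $k\le d/2$ is used: by the tail estimate \eqref{eqn:taildiamest} of Lemma~\ref{lem:diameterlemma1} (applied with $r=1$), $F(M)\le c_6 M^{-d+k-1}$, so $M^k F(M)\lesssim M^{2k-d-1}\to 0$ since $2k-d-1\le-1<0$. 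Recognising the remaining integral as $\int s^k\,\id(-F)(s)=\xi_{k,1}(\diam(E)^k;\diam(E)\ge \tau)$ and multiplying through by $\tau^{-k}$ yields \eqref{eqn:fractaldiameter}.

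For \eqref{eqn:fractaldiameterderivative} I would differentiate the integral representation for $\xi_k(E:\diam(E)\ge\tau)$ directly using Leibniz's rule, which produces $-k\tau^{-k-1}\int_\tau^\infty s^{k-1}F(s)\,\id s-\tau^{-1}F(\tau)$; substituting back the integration-by-parts identity for the remaining integral makes the $\tau^{-1}F(\tau)$ terms cancel and leaves precisely $-\tau^{-k-1}\xi_{k,1}(\diam(E)^k;\diam(E)\ge\tau)$. Finally, \eqref{eqn:diameterinterval} follows from the fundamental theorem of calculus applied to \eqref{eqn:fractaldiameterderivative}, using that the measure $\xi_{k,1}$ has no atom at any fixed $\tau>0$ (which is clear from its integral representation \eqref{eqn:xikrdef}), so that $\xi_k(a\le \diam\le b)=\xi_k(\diam\ge a)-\xi_k(\diam\ge b)$. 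The main obstacle is really the single convergence issue in the integration by parts, and it is handled cleanly by the already-proved tail estimate together with $k\le d/2$.
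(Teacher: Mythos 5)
Your proof is correct, but it takes a genuinely different route from the paper's for both parts. For~\eqref{eqn:fractaldiameter}, the paper simply applies Tonelli/Fubini to swap the order of integration in
\[
\int_0^1\!\!\int \indicator(\diam(E)\geq \tau/r)\,\id\xi_{k,1}\; r^{-k-1}\id r
= \int \indicator(\diam(E)\geq\tau)\int_0^1 \indicator(r\geq \tau/\diam(E))\,r^{-k-1}\id r\;\id\xi_{k,1},
\]
after which the inner integral evaluates directly to $\tfrac{1}{k}(\diam(E)^k/\tau^k - 1)$. This is self-contained — it uses no tail estimate and requires no boundary-term analysis (the non-negativity of the integrand justifies the swap unconditionally). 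Your substitution $s=\tau/r$ followed by integration by parts arrives at the same place but needs the tail bound of Lemma~\ref{lem:diameterlemma1} (hence the hypothesis $k\le d/2$) to kill the boundary term $M^k F(M)/k$; this is correct but pulls in more machinery. Conversely, for~\eqref{eqn:fractaldiameterderivative} your approach is arguably cleaner: differentiating the representation $\tau^{-k}\int_\tau^\infty s^{k-1}F(s)\,\id s$ via Leibniz and the fundamental theorem only requires $F$ to be continuous (which follows from the absolute continuity of $\xi_{k,1}$ in the $(\rho,\kappa,t,\dots)$-coordinates), whereas the paper's difference-quotient sandwich argument first establishes $\tfrac{\id}{\id\tau}\xi_{k,1}(\diam(E)^k;\diam(E)\geq\tau) = \tau^k\tfrac{\id}{\id\tau}\xi_{k,1}(E:\diam(E)\geq\tau)$ and then applies the product rule to~\eqref{eqn:fractaldiameter}, implicitly assuming differentiability of $F$. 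Both approaches verify the same cancellation and yield the same final formula; the final step via~\eqref{eqn:fractaldiameterderivative} and the fundamental theorem of calculus is identical in both.
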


\begin{proof}
We first show~\eqref{eqn:fractaldiameter}. 
Observe that by \eqref{eqn:xikdef} and \eqref{eqn:meashomogeneous} we 
have that 
\begin{eqnarray*}
\lefteqn{\xi_{k,r}(E\,:\,\diam(E)\ge \tau)}\\
& & =\int \indicator\left(2 r \left( \frac{ \rho^2 + \kappa^2 +1}{\kappa^2 +1} \right)^{1/2} (1-t^2)^{1/2} \geq \tau \right) \id (\xi_{k,r}\circ T^r) \\
& & =\int \indicator\left(2 \left( \frac{ \rho^2 + \kappa^2 +1}{\kappa^2 +1} \right)^{1/2} (1-t^2)^{1/2} \geq \frac{\tau}{r} \right) 
r^{d-k-1} \id (\xi_{k,1}\circ T^1)\\
& & =r^{d-k-1} \xi_{k,1}(E\,:\,\diam(E)\ge \tau/r).
\end{eqnarray*}
In fact, this equality also follows from scale invariance.
We see that 
\begin{eqnarray*}
\nonumber
\lefteqn{\xi_k(E\,:\,\diam(E)\ge \tau) =\int_{0}^{1} \xi_{k,r}(E\,:\,\diam(E)\ge \tau)r^{-d} \id r }\\ \nonumber && 
=\int_{0}^{1} \xi_{k,1}(E\,:\,\diam(E)\ge \tau/r)r^{-k-1} \id r\\
& & =\int_{0}^{1}\int  \indicator(\diam(E) \geq \tau/r) \id
\xi_{k,1}r^{-k-1} \id r \\
& & =\int \indicator (\diam(E) \geq \tau) \int_{0}^{1} 
\indicator(r \geq \tau/\diam(E)) r^{-k-1}\id r \id\xi_{k,1} \\
& & =\frac{1}{k}\int \indicator (\diam(E) \geq \tau) 
\left(\frac{\diam(E)^k}{\tau^k}-1 \right) \id\xi_{k,1}\\
& & =\frac{1}{k \tau^k} \xi_{k,1} 
\left( \diam(E)^k ; \diam(E) \geq \tau \right)
-\frac{1}{k} \xi_{k,1} \left(E\,:\,  \diam( E ) \geq \tau \right),
\end{eqnarray*}
proving~\eqref{eqn:fractaldiameter}. We move on to 
prove~\eqref{eqn:fractaldiameterderivative}. It is readily shown that 
the second term on the right hand side of~\eqref{eqn:fractaldiameter} 
is differentiable with respect to $\tau$. We now consider the derivative 
of $\xi_{k,1} \left( \diam(E)^k ; \diam(E) \geq \tau \right)$.
Observe that for $h> 0$,
\begin{eqnarray}\label{eqn:derdim1}
\nonumber \lefteqn{\frac{1}{h}\left(\xi_{k,1} \left( \diam(E)^k ; \diam(E) \geq \tau + h \right)-\xi_{k,1} \left( \diam(E)^k ; \diam(E) \geq \tau \right)\right)} \\ \nonumber 
&& = -\frac{1}{h}\xi_{k,1} 
\left( \diam(E)^k ; \tau \le \diam(E) \le \tau + h \right) \\ 
&& \geq -(\tau+h)^k \frac{1}{h}\xi_{k,1} 
(\tau \le \diam(E) \le \tau + h ).
\end{eqnarray}
This inequality can obviously be reversed if we replace $(\tau+h)^k$
with $\tau^k.$
Letting $h\to 0$, we then see that
\begin{eqnarray}\label{eqn:derdim3}
\lefteqn{ \frac{\id}{\id \tau}\xi_{k,1} \left( \diam(E)^k ; \diam(E) \geq \tau \right) }\\ \nonumber 
&&=-\tau^k \frac{\id}{\id \tau} \xi_{k,1} \left(E\,:\,  \diam( E ) \leq \tau \right) 
=\tau^k \frac{\id}{\id \tau} \xi_{k,1} \left(E\,:\,  \diam( E ) \geq \tau \right).
\nonumber
\end{eqnarray}
Hence, by~\eqref{eqn:fractaldiameter}
\begin{eqnarray*}
\lefteqn{\frac{\id}{\id \tau} \xi_k (E\,:\,\diam(E)\ge \tau)}\\ 
&& =\frac{\id}{\id \tau}\left( \frac{1}{k \tau^k} 
\xi_{k,1} \left( \diam(E)^k ; \diam(E) \geq \tau \right)
-\frac{1}{k} \xi_{k,1} \left(E\,:\,  \diam( E ) \geq \tau \right) \right)\\ 
&&=-\frac{1}{\tau^{k+1}}\xi_{k,1}\left( \diam(E)^k ; \diam(E) \geq \tau \right)\\ 
&& \hspace{10mm} + \frac{1}{k \tau^{k}} \tau^k \frac{\id}{\id \tau} \xi_{k,1} \left(E\,:\,  \diam( E ) \geq \tau \right) -\frac{1}{k} \frac{\id}{\id \tau} \xi_{k,1} \left(E\,:\,  \diam( E ) \geq \tau \right) \\ 
&& = -\frac{1}{\tau^{k+1}}\xi_{k,1}\left(\diam(E)^k ;\diam(E)\geq \tau\right),
\end{eqnarray*}
where we used~\eqref{eqn:derdim3} in the penultimate equality. This finishes the proof~\eqref{eqn:fractaldiameterderivative}.
Finally, we get~\eqref{eqn:diameterinterval} from~\eqref{eqn:fractaldiameterderivative} since
\[
\xi_k(E\,:\,a\le \diam(E)\le b)=-\int_{a}^{b}\frac{\id}{\id \tau} \xi_k(E\,:\,\diam(E)\ge \tau)\id \tau.
\]

\end{proof}

\subsection{A remark on the standard Poisson cylinder model}\label{sec:altpcproof}

Recall the notation $\widehat{\Omega}$, $\hat{\omega}$ and $\widehat{\CV}$ concerning the standard (that is, non-fractal) Poisson cylinder model  from Section~\ref{sec:pcmodel}. We will now describe how to obtain an alternative proof of one of the main results in \cite{TW_2012} using our results above. We point out that our alternative proof is not shorter than the original proof, since we will use for example Lemma \ref{lem:diametermoments2}. Let $\mbox{Perc}_2$ denote the event that $\widehat{\CV}\cap H_2$ contains unbounded connected components (that is, it percolates). The following theorem is Theorem 5.1 in \cite{TW_2012}.
\begin{theorem}[\cite{TW_2012}]\label{thm:twthm}
Let $d\ge 4$. There is a constant $c=c(d,r)>0$ such that if $\lambda\in [0,c]$, then $\widehat{\BP}_{\lambda}(\mathrm{Perc}_2)=1$.
\end{theorem}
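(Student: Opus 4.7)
The plan is to reduce Theorem~\ref{thm:twthm} to vacant percolation in a standard planar Poisson Boolean disc model, exploiting the machinery developed in Sections~\ref{sec:cylintersect}--\ref{sec:anaofellips}. First I would apply Theorem~\ref{thm:cylellips} (or rather, the fixed-radius half of its proof, which produces the measure $\xi_{k,r}$): the trace $\widehat{\CC}\cap H_2$ of the standard Poisson cylinder process is a Poisson ellipsoid process on $\R^2$ with intensity measure $\lambda\,\ell_2\times \xi_{2,r}$.

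Next I would carry out the same ellipsoid-to-ball coupling used to define $\omega_b$ in Section~\ref{sec:inducedmodel}: replace each ellipsoid $E$ in the trace by the disc centred at $\cent(E)$ of radius $\diam(E)/2$, obtaining a Poisson point process $\widehat{\omega}_b$ on $\R^2\times \R_+$. The argument of Theorem~\ref{thm:ballfrac}, applied to $\xi_{2,r}$ in place of $\xi_k$, shows that its intensity measure is $\lambda\,\ell_2\times g_r$, where $g_r$ is the pushforward of $\xi_{2,r}$ under $E\mapsto \diam(E)/2$. Setting $\widehat{\CV}_b:=\R^2\setminus \bigcup_{(x,R)\in \widehat{\omega}_b} B^2(x,R)$, each disc contains its ellipsoid, so $\widehat{\CV}_b\subset \widehat{\CV}\cap H_2$, and it suffices to show that $\widehat{\CV}_b$ percolates almost surely for $\lambda$ small.

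I would then verify the two hypotheses needed to apply classical vacant-percolation results for the planar Boolean model. Lemma~\ref{l:finite} with $k=2$ gives
\[
\|\xi_{2,r}\|_{TV}=\frac{r^{d-3}\psi_{d-3}}{\psi_{d-1}}<\infty,
\]
so $\widehat{\omega}_b$ is a genuine Poisson process, which, after a disintegration, may be described as a homogeneous Poisson point process in $\R^2$ of intensity $\lambda\,\|\xi_{2,r}\|_{TV}$, decorated with i.i.d.\ radii of law $\tilde g_r:=g_r/\|g_r\|_{TV}$. Lemma~\ref{lem:diametermoments2} applied with $n=k=2$, whose finiteness clause requires $n<d-k+1$, i.e.\ $d\ge 4$ (precisely our hypothesis), gives
\[
\E_{\tilde{\xi}_{2,r}}[\diam(E)^2]<\infty,
\]
hence the radius distribution has a finite second moment.

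Finally, I would invoke the classical fact that a planar Poisson Boolean disc model with i.i.d.\ radii of finite second moment admits a strictly positive critical intensity for vacant percolation (see e.g.\ \cite{MR_1996}, Chapter~4). Choosing $c=c(d,r)>0$ small enough that $\lambda\,\|\xi_{2,r}\|_{TV}$ lies strictly below this critical intensity for every $\lambda\in[0,c]$, translation invariance and ergodicity of $\widehat{\omega}_b$ force $\widehat{\CV}_b$ to contain an unbounded connected component almost surely, and the inclusion $\widehat{\CV}_b\subset \widehat{\CV}\cap H_2$ closes the argument. I expect the main obstacle to be less probabilistic than bibliographic: one must locate the planar vacant-percolation statement in a form that accommodates \emph{unbounded} radii satisfying only an $L^2$ condition (since $g_r$ has unbounded support), as the standard textbook formulations sometimes assume bounded grains.
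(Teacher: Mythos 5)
Your proposal follows the paper's proof essentially verbatim: pass to the induced planar ellipsoid process, replace each ellipse by the circumscribing disc of radius $\diam(E)/2$ to get a planar Poisson Boolean model whose vacant set lies inside $\widehat{\CV}\cap H_2$, and then invoke Lemma~\ref{lem:diametermoments2} with $n=k=2$ (whose finiteness clause requires exactly $d\ge 4$) to check a second-moment condition on the radius distribution. The bibliographic worry you flag at the end is precisely the right one---the vacant-percolation statements in \cite{MR_1996} are not stated for unbounded radii under an $L^2$ condition---and the paper resolves it by instead citing Theorem~2 of \cite{P_2018} (equivalently Theorem~2 of \cite{ATT_2018}), which gives almost-sure vacant percolation at small intensity in $\R^2$ under exactly the finite second moment hypothesis $\int_0^\infty R^2\,\id f(R)<\infty$, with no boundedness assumption on the grains.
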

We will proceed by comparing $\widehat{\CV}\cap H_2$ with the vacant set of a Poisson Boolean ball model and appealing to a theorem appearing independently both in \cite{ATT_2018} and \cite{P_2018}. Therefore we first recall the part which we will use from that theorem. Let $f$ be a positive measure with finite mass on $\R_+$ and let $\hat{\omega}_{\mathrm {pb}}$ be a Poisson point process on $\R^2\times \R_+$ with intensity measure $\lambda \ell_2\times f$. Then let
\[
\widehat{\CV}_{\mathrm{pb}}=\R^2 \setminus \bigcup_{(x,R)\in \hat{\omega}_{\mathrm{pb}}} B^2(x,R),
\]
be the vacant set in this Poisson Boolean ball model. Theorem $2$ in \cite{P_2018} or alternatively Theorem $2$ in \cite{ATT_2018} (applied to $\R^2$) implies that if 
\begin{equation}\label{eqn:2momentcond}
\int_{0}^{\infty} R^2 \id f(R)<\infty,
\end{equation}
then there is a.s.\  percolation in $\widehat{\CV}_{\mathrm{pb}}$ when $\lambda>0$ is sufficiently small.

\begin{proof}[Alternative proof of Theorem \ref{thm:twthm}.]

Let 
\[
\hat{\omega}_b=\sum_{L\in \hat{\omega}} \delta_{(\cent(E_2(L,r)),\diam(E_2(L,r))/2)}.
\]
In the same way as Theorem \ref{thm:ballfrac} was argued, one shows that $\omega_b$ is a Poisson point process on $\R^2\times \R_{+}$ with intensity measure $\lambda \ell_2\times \hat{g}$, where
\[
\hat{g}(\cdot)=\xi_{2,r}(E\,:\,\diam(E)/2\in \cdot).
\]
Moreover, if we let 
\[
\widehat{\CV}_b=\R^2\setminus \bigcup_{(x,R)\in \hat{\omega}_b} B^2(x,R),
\]
then clearly
\[
\widehat{\CV}_b\subset \widehat{\CV}\cap H_2.
\]
Similar constructions were used in Section \ref{sec:inducedmodel}. We will now be done if we can show that condition \eqref{eqn:2momentcond} holds with $\hat{g}$ in place of $f$. We have that
\begin{eqnarray*}
\lefteqn{\int_{0}^{\infty} R^2\id \hat{g}(R)=\int_{0}^{\infty} R^2 \xi_{2,r}(E\,:\,\diam(E)/2\in \id R)} \\ && =\|\xi_{2,r}\|_{TV}\int_{0}^{\infty} R^2  \tilde{\xi}_{2,r}(E\,:\,\diam(E)/2\in \id R)=\frac{\|\xi_{2,r}\|_{TV}}{4}\E_{\tilde{\xi}_{2,r}}[\diam(E)^2].
\end{eqnarray*}
Lemma \ref{lem:diametermoments2} applied to $k=2$ and $n=2$ shows that the last expectation is finite whenever $d\ge 4$, finishing the proof.
\end{proof}

\section{The fractal process: Connectivity phase and domination by 
the fractal ball model }\label{sec:connectivitydomination}

We will split the proof of Theorem \ref{thm:connectivitytranstition} into 
two separate statements as the proofs uses different approaches.

\subsection{Connectivity when $d\geq 4.$}

The objective of this subsection is to show that whenever $\lambda>0$ is
small enough, the fractal model contains connected components with 
probability one. This result is known to hold for the regular fractal ball 
model, and our strategy is to couple the fractal cylinder model with 
this ball model and infer the result from this coupling. Since the 
cylinders are unbounded we will need to consider the intersection 
of the cylinder process with a lower dimensional subspace $H_k,$ i.e.
we will use the induced ellipsoid process of Sections \ref{sec:cylintersect}
and \ref{sec:anaofellips}.

Therefore, define the regular fractal ball model as follows.
Consider a Poisson process $\omega_\reg$ on 
$\BR^k \times \BR_+$ with intensity measure 
\begin{equation}\label{eqn:regintens}
\lambda \ell_k \times R^{-k-1} \indicator(0<R\leq 2) \id R,
\end{equation}
and then let 
\[
\CV_\reg(\omega_\reg):=\BR^k \setminus \bigcup_{(x,R)\in \omega_\reg} B^k(x,R).
\]
The intensity measure in \eqref{eqn:regintens} corresponds to a 
scale invariant model with an upper cutoff (of 2) on the radius.
It is well known that for $\lambda>0$ small enough, 
$\CV^k_\reg(\omega_\reg):=\CV_\reg(\omega_\reg)\cap [0,1]^k \times\{0\}^{d-k}$ 
contains connected components with 
positive probability (see for example Theorem $2.4$ in \cite{BC_2010}). We note that 
it is customary to use $\indicator(0<R\leq 1)$ in place of 
$\indicator(0<R\leq 2)$ in \eqref{eqn:regintens}. However, because 
of scaling, this does not change the conclusion that $\CV^k_\reg(\omega_\reg)$
contains connected components with positive probability as long as $\lambda>0$
is small enough.

We now informally explain how the coupling between $\omega_\reg$
and the fractal cylinder model will be performed. There are 
three steps to the procedure.
First of all, the cylinder process induces a Poisson ball model as described
in Section \ref{sec:cylintersect} (see in particular Theorem 
\ref{thm:ballfrac}). Secondly, we will argue that ``large'' balls of this 
induced process can be disregarded (this uses the results of Section 
\ref{sec:anaofellips}), and so essentially the induced fractal 
ball model will have a cutoff similar to the regular fractal ball model. 
The third and final step will be to prove that the induced ball model
with a cutoff can be suitably dominated by a regular fractal ball model, 
and thereby we obtain a comparison between $\CV^k$ and $\CV^k_\reg$
(recall that $\CV^k=\CV\cap [0,1]^k \times \{0\}^{d-k}$).

We can now state and prove the following result.
\begin{theorem} \label{thm:conndgeq4}
For $d\geq 4,$ $\lambda_c \in(0,\infty)$.
\end{theorem}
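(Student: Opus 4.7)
The upper bound $\lambda_c < \infty$ is immediate from Theorem \ref{thm:lambdae}: at $\lambda = d$ one has $\Pm_d(\CV = \emptyset) = 1$, and the empty set is vacuously totally disconnected, so $\lambda_c \le d$.

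For the lower bound $\lambda_c > 0$, the plan is to work on the subspace $H_2$ (that is, take $k=2$), where the hypothesis $d \geq 4$ puts $k$ in the favourable range: $2 \leq k \leq d/2$ so that Lemma \ref{lem:diam2} applies, and $k < d-k+1$ so that the moment $\E_{\tilde{\xi}_{k,1}}[\diam(E)^k]$ is finite (Lemma \ref{lem:diametermoments2}). By Theorem \ref{thm:ballfrac}, the induced ball process $\omega_b$ on $H_2$ is Poisson with intensity $\lambda\ell_2\times g$ and $\CV_b \subseteq \CV \cap H_2 \subseteq \CV$, so it suffices to exhibit a non-trivial connected subset of $\CV_b$ with positive probability.

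Combining equation \eqref{eqn:fractaldiameterderivative} with the trivial estimate $\xi_{k,1}(\diam(E)^k ; \diam(E) \geq 2R) \leq \xi_{k,1}(\diam(E)^k) = \|\xi_{k,1}\|_{TV}\, \E_{\tilde{\xi}_{k,1}}[\diam(E)^k]$ --- finite by the moment computation above together with Lemma \ref{l:finite} --- will yield the key pointwise density estimate
\[
g(\id R) \leq c\, R^{-k-1}\, \id R, \qquad R > 0,
\]
for a constant $c = c(d,k)$. Splitting $\omega_b = \omega_b^{\leq R_0} + \omega_b^{>R_0}$ at a threshold $R_0$, the small-ball part $\omega_b^{\leq R_0}$ is then stochastically dominated by a regular scale-invariant fractal ball process of intensity $\lambda c\,\ell_k \times R^{-k-1}\indicator(R\le R_0)\,\id R$, which up to rescaling is the model of \eqref{eqn:regintens}. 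Classical connectivity results for such ball processes --- Theorem 2.4 of \cite{BC_2010} and Section 8 of \cite{MR_1996} --- ensure that, for $\lambda c$ sufficiently small, the corresponding vacant set admits non-trivial connected components with positive probability, and by the domination the same is then true for $\CV_b^{\leq R_0}$.

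The main obstacle will be to show that such a non-trivial connected subset of $\CV_b^{\leq R_0}$ persists in $\CV_b = \CV_b^{\leq R_0} \cap \CV_b^{>R_0}$ after also removing the big balls $\omega_b^{>R_0}$: although the centres of these big balls form a locally finite Poisson process on $H_2$ with finite intensity $\lambda\,\xi_k(\diam(E) \geq 2R_0)$ by Lemma \ref{lem:diam2}, each individual big ball can cover a large region, and in fact every fixed point of $H_2$ is almost surely covered by infinitely many of them. The decisive input here is the polynomial tail bound $\xi_k(\diam(E) \geq \tau) \lesssim \tau^{k-d-1}$ of Lemma \ref{lem:diam2}, which opens the door to a multiscale/renormalisation argument in the spirit of \cite{BC_2010}: by scale invariance one may rescale so as to treat big balls of comparable sizes at each dyadic scale and control their aggregate effect by a geometric sum using the tail bound, combined with an FKG-type lower bound on the probability that the connected subset of $\CV_b^{\leq R_0}$ escapes coverage at every scale. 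Putting these ingredients together will yield $\Pm_\lambda(\CV_b \text{ is not totally disconnected}) > 0$ for sufficiently small $\lambda$, and hence $\lambda_c > 0$.
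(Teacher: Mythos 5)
Your upper bound $\lambda_c\le d$ via Theorem \ref{thm:lambdae} is fine, and your set-up for the lower bound — working on $H_2$ so that $k=2\le d/2$, using Theorem \ref{thm:ballfrac} and the inclusion $\CV_b\subset\CV\cap H_2$, and extracting the density bound $g(\id R)\le cR^{-k-1}\,\id R$ from \eqref{eqn:fractaldiameterderivative} and Lemma \ref{lem:diametermoments2} to dominate the small-ball part by a scale-invariant fractal ball process — is essentially the route the paper takes. The gap is in what you call ``the main obstacle.''

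Your claim that ``every fixed point of $H_2$ is almost surely covered by infinitely many of them [big balls]'' is false, and the multiscale/renormalisation step you propose to circumvent it is therefore unnecessary (and, as sketched, not actually carried out). The crude bound $g(\id R)\le cR^{-k-1}\,\id R$ is very lossy for large $R$: by Lemma \ref{lem:diameterlemma2}, for $k\le d/2$ and $\tau\ge 4$ one has $\xi_k(E:\diam(E)\ge\tau)\asymp\tau^{-d+k-1}$, so the tail density of $g$ decays like $R^{-d+k-2}$, strictly faster than $R^{-k-1}$ precisely because $2k<d+1$. Consequently the expected number of balls of radius $>R_0$ hitting a fixed bounded region is finite:
\[
\int_{\R^k} g\bigl([\max(R_0,\|x\|-\sqrt{k}),\infty)\bigr)\,\id\ell_k(x)
\lesssim \int_{0}^{\infty}\max(R_0,R)^{-d+k-1}\,R^{k-1}\,\id R<\infty ,
\]
and more to the point, the event $\CH_\emptyset$ that \emph{no} big ball from $\omega_b$ touches $[0,1]^k\times\{0\}^{d-k}$ has strictly positive probability. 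Conditioning on $\CH_\emptyset$ reduces everything to the cutoff process $\tilde\omega_b$, where your stochastic-domination argument applies directly; no renormalisation over scales is needed. In short: the polynomial tail bound of Lemma \ref{lem:diameterlemma2} should be used not to ``control the aggregate effect'' of the big balls scale by scale, but to show outright that with positive probability there are none.
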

\begin{proof}
Fix $d\geq 4$ and $k\leq d/2.$ We note that it would suffice to 
let $k=2$ throughout, but keeping $k$ in place does not change the proof.
The proof will rely on the discussion above that for $\lambda>0$ small
enough, $\CV^k_\reg$ contains connected components 
with positive probability. 

The first step is short as most of the work is already done. Recall the 
notation $\omega_b,$ Theorem \ref{thm:ballfrac}, and the fact that 
\[
\CV_b \subset \CV \cap H_k,
\]
where $\CV_b$ is the induced fractal ball model as defined in Section 
\ref{sec:cylintersect}.

Our second step will be to consider ``large'' balls. To that end, let 
\[
\tilde{\omega}_b:=\bigcup_{(x,R)\in \omega_b:R\leq 2} \delta_{(x,R)},
\]
so that $\tilde{\omega}_b$ is obtained by taking $\omega_b$ and removing 
any ball with radius larger than 2.
Then, let 
\[
\tilde{\CV}_b:=\BR^k \setminus \bigcup_{(x,R)\in\tilde{\omega}_b}B^k(x,R)
\]
and observe that $\tilde{\CV}_b \supset \CV_b$. 
As before, we will use 
the notation $\tilde{\CV}^k_b=\tilde{\CV}_b\cap[0,1]^k \times \{0\}^{d-k}$ 
and similar for $\CV_b^k.$ It is clearly the case that 
$\BP(\omega_b=\tilde{\omega}_b)=0$ since these are processes on the entire
space $\BR^k$. However, by restricting our attention to 
$[0,1]^k \times \{0\}^{d-k}$ this will not pose a problem. 
Therefore, let
\[
\CH_\emptyset:=\{\not \exists (x,R)\in \omega_b: R>2, 
B(x,R)\cap [0,1]^k\times\{0\}^{d-k} \neq \emptyset\}
\]
be the event that no ``large'' balls 
from $\omega_b$ hits $[0,1]^k\times\{0\}^{d-k}.$

Observe that by the nature of Poisson processes, conditioned on 
$\CH_\emptyset,$ we have that $\CV^k_b$ has the same distribution 
as $\tilde{\CV}^k_b$. Therefore,
\begin{eqnarray} \label{eqn:conncomp}
\lefteqn{\BP(\CV^k_b \textrm{ contains connected components})}\\
& & \geq \BP(\CV^k_b \textrm{ contains connected components}
| \CH_\emptyset) \BP(\CH_\emptyset) \nonumber \\
& & =\BP(\tilde{\CV}^k_b \textrm{ contains connected components}
) \BP(\CH_\emptyset).\nonumber 
\end{eqnarray}
Consider now $\BP(\CH_\emptyset)$ and 
note that by Theorem \ref{thm:ballfrac}, 
and Lemma \ref{lem:diameterlemma2} (which holds when the diameter is at 
least 4) we have that
\begin{eqnarray} \label{eqn:Hempty}
\lefteqn{\BP(\CH_\emptyset)=\exp\left(-\lambda \ell_k \times g 
((x,R):R >2, B^k(x,R)\cap [0,1]^k\times\{0\}^{d-k}\neq \emptyset)\right)}\\
& & \geq \exp\left(-\lambda \ell_k \times g 
((x,R):R >2, B^k(x,R)\cap (B^k(o,\sqrt{k})\times\{0\}^{d-k})
\neq \emptyset)\right) \nonumber \\
& & =\exp\left(-\lambda \int_{\BR^k} g(R \geq \max(2, \|x\|-\sqrt{k})) 
\id \ell_k( x)\right) \nonumber \\
& & =\exp\left(-\lambda \int_{\BR^k} \xi_k(E:\diam(E)
\geq \max(4, 2\|x\|-2\sqrt{k}))\id \ell_k( x)\right) \nonumber \\
& & \geq \exp\left(-\lambda \int_{\BR^k} c_{10} \max(4, 2\|x\|-2\sqrt{k})^{-d+k-1}
\id \ell_k( x)\right) \nonumber \\
& & =\exp\left(-\lambda c \int_0^\infty \max(4, 2R-2\sqrt{k})^{-d+k-1}
R^{k-1} \id R \right) \nonumber \\
& & \geq \exp\left(-\lambda c \int_{2\sqrt{k}}^\infty R^{-d+2k-2}\id R\right)>0,
\nonumber 
\end{eqnarray}
whenever $d\geq 4$ and $k\leq d/2.$
Observe that \eqref{eqn:conncomp} and \eqref{eqn:Hempty} allow us to 
transfer the problem from $\CV^k_b$ to one about $\tilde{\CV}_b^k.$ 
This concludes the second step of the proof.

The third step will be to show that $\tilde{\CV}^k_b$ contains 
connected components with positive probability, and this will be done 
by comparison with $\CV^k_\reg$. To that end we note that the 
intensity measure corresponding to $\tilde{\CV}^k_b$ is simply 
\[
\lambda \id \ell_k(x) \times \id g( R) \indicator (0<R\leq 2),
\]
since there are no balls of radius larger than 2 in $\tilde{\omega}_b$.

By Lemma \ref{lem:diam2} (in particular Equation~\eqref{eqn:diameterinterval}), 
\begin{eqnarray}\label{eqn:gdens}
\lefteqn{\id g( R)=2^{-k}\xi_{k,1}(\diam(E)^k ; \diam(E)\ge 2 R)R^{-k-1}\id R}\nonumber\\ && = 2^{-k} \|\xi_{k,1}\|_{TV}\E_{\tilde{\xi}_{k,1}} \left[ \diam(E)^k ; 
\diam(E) \geq 2 R \right]R^{-k-1}\id R \nonumber \\ && =2^{-k}\beta_{ R} \|\xi_{k,1}\|_{TV}R^{-k-1}\id R,
\end{eqnarray}
where 
\[
\beta_R:=\E_{\tilde{\xi}_{k,1}} \left[ \diam(E)^k ; 
\diam(E) \geq 2 R \right]
\leq \E_{\tilde{\xi}_{k,1}} \left[ \diam(E)^k \right]
(=\beta_0)<\infty.
\]
In~\eqref{eqn:gdens}, the factor $2^{-k}$ appears since Lemma \ref{lem:diam2} 
is formulated for $\diam(E)$ rather than $\diam(E)/2$.
The fact that $\beta_0<\infty$ comes from  
Lemma \ref{lem:diametermoments2} where we used $n=k$ and that $k \leq d/2.$ 
Letting 
\[
\id g_0( R):=2^{-k}\beta_0 \|\xi_{k,1}\|_{TV}
R^{-k-1}\indicator(0<R\leq 2)\id R,
\]
we let $\omega_{b,0}$ be a Poisson process on $\BR^k \times \BR_+$
with the intensity measure 
\[
\lambda \id \ell_k( x) \times \id g_0( R).
\]
Clearly, this is the same intensity measure as in \eqref{eqn:regintens}
but with a different constant. Letting

\[
\CV(\omega_{b,0})=\BR^k \setminus \bigcup_{(x,R)\in \omega_{b,0}}B^k(x,R),
\]
it therefore follows that 
$\CV(\omega_{b,0})$ contains connected components whenever $\lambda>0$ 
is chosen small enough.

It is straightforward to couple $\tilde{\omega}_b$ and 
$\omega_{b,0}$ so that 
\[
\CV(\omega_{b,0}) \subset \tilde{\CV}_b. 
\]
Indeed, one can do this by considering a point process on the space
$\BR^d\times\BR_+ \times \BR_+$ with intensity measure
\[
\lambda \id x \times R^{-k-1}\indicator(0<R\leq 2)\id R 
\times \indicator (q>0) \id q,
\]
and then for a given triple $(x,R,q)$ we let $(x,R)\in\omega_{b,0}$ iff
$q\leq 2^{-k} \beta_0 \|\xi_{k,1}\|_{TV}$, and $(x,R)\in \tilde{\omega}_{b}$ iff
$q\leq 2^{-k} \beta_{R}  \|\xi_{k,1}\|_{TV}.$
Letting $\BP_{b,0}$ denote the law of $\omega_{b,0}$, we conclude that 
\begin{eqnarray} \label{eqn:conncomp2}
\lefteqn{\BP(\tilde{\CV}^k_b \textrm{ contains connected components}
)}\\
& & \geq \BP_{b,0}(\CV^k(\omega_{b,0})\textrm{ contains connected components})>0
\nonumber
\end{eqnarray}
By combining \eqref{eqn:conncomp}, \eqref{eqn:Hempty} and 
\eqref{eqn:conncomp2} the statement follows.
\end{proof}

\subsection{Connectivity when $d=2,3.$}

We now turn to the final case of connectivity when $d=2,3.$
We will prove the following theorem.

\begin{theorem}\label{thm:disconn23}
Let $\lambda>0$. 
\begin{enumerate}
\item[a)] For $d=2$, there are almost surely no connected components in $\CV$.
\item[b)] For $d=3$, there are almost surely no connected components in $\CV\cap H_2$.
\end{enumerate}
\end{theorem}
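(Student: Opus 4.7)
For part (a), $d=2$: The key is that each cylinder $\Fc(L,r)$ in $\R^2$ is a closed strip which topologically separates $\R^2$ into two half-planes. So the plan reduces to showing that almost surely, for every pair of distinct points $x,y \in \R^2$, some $(L,r) \in \omega$ has its strip disjoint from both $x$ and $y$ but its line $L$ crossing the open segment $(x,y)$. For a fixed pair of distinct rationals $q_1, q_2 \in \Q^2$, Lemma \ref{lem:basicmeasure} gives a positive constant $c_{q_1,q_2}>0$ lower bound on $\nu_2(\{L : L \text{ meets the middle third of }[q_1,q_2]\})$; pairing this with $\id \varrho_s(r) = r^{-2}\id r$ yields
\[
(\lambda \nu_2 \times \varrho_s)\bigl\{ (L,r) : L \text{ meets the middle third of } [q_1,q_2],\ 0 < r < \|q_1 - q_2\|/8 \bigr\} \geq \lambda c_{q_1,q_2} \int_0^{\|q_1-q_2\|/8} r^{-2}\id r = \infty,
\]
so the required cylinder exists almost surely. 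A countable union over rational pairs gives this simultaneously for all of them, and a rational approximation argument handles arbitrary $x,y \in \CV$: if $x,y$ belonged to a common connected component of $\CV$, then a rational pair $(q_1, q_2)$ chosen with $\|q_i - x\|,\|q_i - y\| \ll \|x-y\|$ would have a witnessing thin strip that also separates $x$ from $y$ without containing either, a contradiction.

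For part (b), $d = 3, k = 2$: Cylinders in $\R^3$ do not individually separate $H_2$, so the plan is to build complete Jordan circuits inside $\CC \cap H_2$ around every point. By Theorem \ref{thm:cylellips}, the restriction to $H_2$ is a Poisson ellipse process with intensity $\lambda \ell_2 \times \xi_2$, and by Lemma \ref{lem:diameterlemma2} we have $\xi_2(E : \diam(E) \geq \tau) = \infty$ for every $\tau \geq 0$. The goal is to show that almost surely, for every $z \in H_2$ and every scale $\epsilon > 0$, the annulus $A_\epsilon(z) := B(z,\epsilon) \setminus B(z, \epsilon/2)$ contains a Jordan curve inside $\CC \cap H_2$ encircling $z$; this forces the connected component of every $z \in \CV \cap H_2$ to be the singleton $\{z\}$. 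The circuit is produced by partitioning $A_\epsilon(z)$ into finitely many slightly overlapping angular sectors, showing that in each sector some ellipse of the process almost surely crosses radially from $\partial B(z,\epsilon/2)$ to $\partial B(z,\epsilon)$, and topologically chaining the crossings at the overlaps. A countability step over $z \in \Q^2$ and dyadic scales $\epsilon = 2^{-n}$, a continuity argument to pass from rational to arbitrary centers, and the zero-one law of Lemma \ref{lem:zeroone} then complete the proof.

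The hardest step is the sector-crossing claim. That $\xi_2(\diam(E) \geq \epsilon/4) = \infty$ already yields infinitely many ellipses of diameter at least $\epsilon/4$ centered in any given sector, but this alone does not furnish one whose major axis both lies inside the sector and reaches both boundary circles of the annulus. The required refinement is to isolate, in the parameterization $(\rho, \kappa, t, \theta, \phi, \varphi)$ of Section \ref{sec:anaofellips}, the joint sub-region of cylinder parameters in which center, orientation and aspect ratio combine into a genuine radial crossing of $A_\epsilon(z)$, and to verify that the corresponding $\nu_3 \times \varrho_s$-measure still diverges thanks to the $r^{-d}$ factor as $r \to 0$. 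Rotation invariance of $\xi_2$ within $H_2$ and the scale invariance of the fractal cylinder process are what respectively handle uniformity in the angular direction and in the scale $\epsilon$, reducing the whole sector-crossing estimate to a single calculation at $\epsilon = 1$.
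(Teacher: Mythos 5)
Your route is genuinely different from the paper's, and the contrast is worth recording. The paper isolates a single topological reduction (Lemma~\ref{lem:noarmdisconn}): a translation- and rotation-invariant random closed set in the plane whose complement almost surely contains a left--right crossing of the $3\epsilon\times\epsilon$ rectangle, at every scale $\epsilon\in(0,1/5)$, is almost surely totally disconnected. Both parts of the theorem then collapse to showing a crossing event has infinite intensity: for $d=2$ that $\nu_2$ of the lines meeting both short sides of the rectangle is positive and integrating against $r^{-2}\,\id r$ near $0$ diverges; for $d=3$ that the $\ell_2\times\xi_2$-measure of the ellipses in $LR_1$ (centred in $K(\epsilon/2)$, diameter $\ge 10\epsilon$, $|\arg(E)|<1/10$) is infinite (Lemma~\ref{lem:ellipsecrossing}). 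Your plan bypasses the crossing lemma altogether in $d=2$ by exploiting that a strip separates the plane, and in $d=3$ it trades four rectangle crossings for a circuit of radial sector crossings. The $d=2$ route is shorter and more elementary. In $d=3$ you have not really saved work: the sector-crossing estimate you defer is of exactly the type computed in Lemma~\ref{lem:ellipsecrossing}, and the ``topologically chaining the crossings at the overlaps'' step is left unspecified --- radial crossings in adjacent sectors need not meet, so one must also supply azimuthal crossings in the overlap regions (or simply adopt the paper's four-rectangle configuration) in order to actually close the circuit.

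One step in your part (a) does not hold as stated. You take $(L,r)$ with $L$ meeting the middle third of $[q_1,q_2]$ and $r<\|q_1-q_2\|/8$, and assert such a strip separates nearby $x,y$ without containing either. But a line through the middle third that is nearly parallel to $[q_1,q_2]$ can pass arbitrarily close to $q_1$ and $q_2$; the strip can then contain $q_1$ (and the nearby $x$), and even when $x,y\in\CV$ are disjoint from the strip they may end up on the same side of $L$, in which case nothing is separated. The fix is to shrink the set of witnessing lines: require in addition that $q_1,q_2$ lie on opposite sides of $L$ with $d(L,q_1)\ge\|q_1-q_2\|/4$ and $d(L,q_2)\ge\|q_1-q_2\|/4$. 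This is an open set of lines containing the perpendicular bisector of $[q_1,q_2]$, so it still has positive $\nu_2$-mass, the $r^{-2}$ integral near zero still diverges, and any witnessing strip of radius $r<\|q_1-q_2\|/8$ now misses an $\|q_1-q_2\|/8$-neighbourhood of both $q_1$ and $q_2$, which is exactly what your rational-approximation step needs.
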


Before we proceed with the proof of Theorem \ref{thm:disconn23},
we need to establish some notation and auxiliary results.
For $s,t>0$, we define the 
rectangle $K(s,t) := [ -s/2  , s/2 ] \times [-t/2, t/2]\subset H_2$ and we 
write $K(s)=K(s,s)$ for the square of side-length $s$ centred at the origin. 
For $\epsilon>0$, let $LR=LR(\epsilon)$ denote the set of ellipsoids centred 
in the square $K(\epsilon/2)\subset H_2$ intersecting both the left and 
right-hand sides of the rectangle $K(3\epsilon,\epsilon)$. That is,
\begin{eqnarray}\label{eqn:crossingevent}
\lefteqn{LR= \{  E\in \mathfrak{E}^2\, :  \ \cent(E) \in K(\epsilon/2), } \\ 
&& \nonumber
 E \cap \{-3 \epsilon/2 \} \times[-\epsilon/2,\epsilon/2] \neq \emptyset, 
 E \cap \{3 \epsilon/2 \} \times[-\epsilon/2,\epsilon/2] \neq \emptyset     \}.
\end{eqnarray}

For any set $\mathcal{R}\subset \R^2$ and $A ,B \subset	\R^2$ we 
define 
\begin{equation*}
A \overset	{ \mathcal{R}}{\longleftrightarrow} B
\end{equation*}
to be the event that there exists a connected component in 
$\mathcal{R}$  intersecting both $A$ and $B$. 
Let 
\begin{equation*}
\mathrm{Arm}( \epsilon,\mathcal{R}) = \left\{ K(\epsilon) \overset{\mathcal{R}}{\longleftrightarrow} \partial K(3 \epsilon) \right\},
\end{equation*}
be the event that there is a crossing in $\mathcal{R}$ of the 
annulus $K(3\epsilon) \setminus K(\epsilon)$.

\begin{lemma}\label{lem:noarmdisconn}
Assume that $\CR$ is a random closed subset of $\R^2$ and that the law  ${\mathbf P}$ of 
$\CR$ is invariant under translations and rotations of $\R^2$. Assume further
that for every $\epsilon\in (0,1/5)$ we have 
\begin{equation}\label{eqn:assumption1}
{\mathbf P}\left(\{-3\epsilon/2\}\times [-\epsilon/2,\epsilon/2] 
\overset{ \mathcal{R}^c \cap K(3\epsilon,\epsilon)}{\longleftrightarrow}
\{3\epsilon/2\}\times [-\epsilon/2,\epsilon/2] \right)=1.
\end{equation}
Then ${\mathbf P}(\CR \mbox{ is totally disconnected})=1$.
\end{lemma}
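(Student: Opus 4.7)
The plan is to first show that for each $\epsilon\in (0,1/5)$, $\mathbf{P}(\mathrm{Arm}(\epsilon,\CR))=0$, and then to deduce total disconnectedness via a countable-union argument over rational centers and $\epsilon=1/n$.

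For the first step, I would apply the hypothesis~\eqref{eqn:assumption1} to four shifted or rotated copies of $K(3\epsilon,\epsilon)$, namely the top, bottom, left and right ``strips'' that together cover the annulus $K(3\epsilon)\setminus K(\epsilon)$. By translation and rotation invariance of $\mathbf{P}$, each of the four strips almost surely contains a crossing in $\CR^c$ between its two short edges. Since $\CR^c$ is open, I can upgrade each such crossing to a continuous arc in $\CR^c$ joining its two short edges, using local path-connectedness. In each of the four corner squares (the pairwise intersections of adjacent strips) the two arcs coming from the two adjacent strips are continua crossing that square in perpendicular directions, so by the classical planar-topology fact that two such perpendicular continua must meet, they share a common point. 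Concatenating the four arcs at these four common points yields a closed curve $L\subset\CR^c$ that winds once around $K(\epsilon)$ inside the annulus. By the Jordan curve theorem, $L$ separates $K(\epsilon)$ from $\R^2\setminus K(3\epsilon)$; since $L\subset\CR^c$, no connected subset of $\CR$ can intersect both $K(\epsilon)$ and $\partial K(3\epsilon)$, and so $\mathbf{P}(\mathrm{Arm}(\epsilon,\CR))=0$.

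For the second step, translation invariance gives the same conclusion for the Arm event centered at any $z\in\R^2$. Taking a countable union over $z\in\Q^2$ and $\epsilon\in\{1/n:n\ge 5\}$, I obtain that almost surely, for every such pair $(z,n)$ simultaneously, no connected component of $\CR$ intersects both $z+K(1/n)$ and the complement of $z+K(3/n)$. On this almost sure event, suppose for contradiction that some connected component of $\CR$ contained two distinct points $x\neq y$; by density of $\Q^2$ one could choose $z\in\Q^2$ with $\|z-x\|_\infty$ arbitrarily small and $n$ large enough that $x\in z+K(1/n)$ and $y\notin z+K(3/n)$, which would violate the previous statement. Hence every connected component of $\CR$ is a singleton, i.e.\ $\CR$ is totally disconnected.

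The main obstacle lies in the topological construction of the loop $L$. The hypothesis only delivers a connected subset of $\CR^c$, not a continuous arc, and the corner-square intersection lemma is classically phrased in terms of closed connected sets, whose point of intersection might a priori lie on the boundary of $\CR^c$, i.e.\ in $\CR$ itself. To circumvent this one first passes to an open connected component of $\CR^c$ inside each strip that realizes the crossing — such a component exists because $\CR^c$ is open — and carries out the entire gluing argument inside the open set $\CR^c$, where connected components are path-connected. From there the corner intersections can be taken to lie genuinely in $\CR^c$, and the four arcs concatenate to an honest closed curve. The remaining ingredients are standard planar topology.
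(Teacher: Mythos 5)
Your proposal is correct and follows essentially the same route as the paper: deduce $\mathbf{P}(\mathrm{Arm}(\epsilon,\CR))=0$ from four translated/rotated copies of the crossing event \eqref{eqn:assumption1} surrounding the annulus $K(3\epsilon)\setminus K(\epsilon)$, then use a countable union over rational centers and scales. The only difference is that you spell out the planar-topology gluing of the four crossings into a separating loop (a step the paper treats as a self-evident observation), and your resolution of the ``intersection in $\CR$'' worry is in fact automatic since the crossings already lie inside the open set $\CR^c$.
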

Observe that the event inside~\eqref{eqn:assumption1} is the existence 
of a connected component contained in 
$\CR^c \cap K(3\epsilon,\epsilon)$ connecting the right and left sides 
of the rectangle $K(3\epsilon,\epsilon)$.
\begin{proof}
Let $\epsilon\in (0,1/5)$. First observe that if the four events 
\[
\{-3\epsilon/2\}\times [\epsilon/2,3\epsilon/2] \overset	{ \mathcal{R}^c \cap (K(3\epsilon,\epsilon)+(0,\epsilon))}{\longleftrightarrow}\{3\epsilon/2\}\times [\epsilon/2,3\epsilon/2],
\]
\[
\{-3\epsilon/2\}\times [-3\epsilon/2,-\epsilon/2] \overset	{ \mathcal{R}^c \cap (K(3\epsilon,\epsilon)-(0,\epsilon))}{\longleftrightarrow}\{3\epsilon/2\}\times [-3\epsilon/2,-\epsilon/2]
\]
\[
 [\epsilon/2,3\epsilon/2] \times \{-3\epsilon/2\}\overset	{ \mathcal{R}^c \cap (K(\epsilon,3\epsilon)+(\epsilon,0))}{\longleftrightarrow}[\epsilon/2,3\epsilon/2]\times \{3\epsilon/2\},
\]
\[
[-3\epsilon/2,-\epsilon/2] \times \{-3\epsilon/2\}\overset	{ \mathcal{R}^c \cap (K(\epsilon,3\epsilon)-(\epsilon,0))}{\longleftrightarrow}[-3\epsilon/2,-\epsilon/2]\times \{3\epsilon/2\},
\]
all occur, then $\mathrm{Arm}(\epsilon,\CR)$ does not occur. Furthermore,
since the aforementioned four events all have probability $1$ (by 
assumption~\eqref{eqn:assumption1} and the rotational and translation 
invariance of the law of $\CR$), we get that 
\begin{equation}\label{eqn:zeroarmprob}
{\mathbf P}(\mathrm{Arm}(\epsilon,\CR))=0.
\end{equation}
Now let $q \in \Q^2$, $\epsilon \in (0,1/5) \cap \Q$ and 
$\mathrm{Arm}_q(\epsilon,\CR)$ denote the event that there is a crossing in 
$\CR$ in the annulus 
\[
q + K(3\epsilon) \setminus K(\epsilon).
\]
Then we have that
\begin{equation*}
\left\{ \CR \text{ is totally disconnected} \right\}^c 
\subset \bigcup_{q \in \Q^2} \bigcup_{\epsilon \in(0,1/5) \cap \Q}  
\mathrm{Arm}_q(\epsilon,\CR),
\end{equation*}
since if $\CR$ has a connected component there must exist some $q\in \Q^2$ and some $\epsilon \in (0,1/5) \cap \Q$ such that there is a crossing of the annulus 
\[
q  + K(3\epsilon) \setminus K(\epsilon).
\]
 Hence we have
\begin{equation*}
{\mathbf P}\left( \left\{ \CR \text{ is totally disconnected} \right\}^c \right) 
\leq \sum_{ q \in \Q^2 } \sum_{ \epsilon \in (0,1/5) \cap \Q} 
{\mathbf P} \left( \mathrm{Arm}_q(\epsilon,\CR) \right)=0,
\end{equation*}
using~\eqref{eqn:zeroarmprob} and translational invariance in the last 
equality. Thus 
\begin{equation*}
{\mathbf P} \left(\CR\text{ is totally disconnected } \right)=1,
\end{equation*}
as required.
\end{proof}

To deal with $\CV\cap H_2$ when $d=3$ we will need one additional result. 
We first recall some formulas used if $d=3$ and $k=2$. For 
$d=3$, the expression for the shape measure $\xi_{2,r}$ is given by
\begin{eqnarray*}
\xi_{2,r}(\cdot) = \int_{ E_2(L,r)_o \in \cdot} 
\frac{1}{\left( 1+a_1^2 +a_2^2 \right)^2} \id a_1 \id a_2,
\end{eqnarray*}
see Theorem \eqref{thm:cylellips} and Equation~\eqref{eqn:xikrdef}. 
Moreover, according to Corollary \ref{corr:ellipsoidvolume},
the expression for the diameter of an ellipse $E_2(L,r)$ is then given by 
\[
\diam(E_2(L,r)) = 2 r \sqrt{1+a_1^2 +a_2^2}.
\]
Recall also the notation
\begin{equation*}
\omega_{e} = \sum_{ (L,r) \in \omega } \delta_{ (\cent(E_2(L,r)),E_2(L,r)_o) }.
\end{equation*}
We have the following lemma.
\begin{lemma}\label{lem:ellipsecrossing}
Let $d=3$ and $\epsilon \in(0,1/5)$. Then for every $\lambda>0$,
\begin{equation}\label{eqn:ellipsecrossing}
\Pm \left( LR(\epsilon,\omega_{e})\right) =1.
\end{equation}

\end{lemma}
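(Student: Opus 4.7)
By Theorem~\ref{thm:cylellips} specialised to $d=3$, $k=2$, the point process $\omega_e$ is Poisson on $\FE^2$ with intensity $\lambda\ell_2\times\xi_2$, so
\[
\Pm(LR(\epsilon,\omega_e))=1-\exp\bigl(-\lambda(\ell_2\times\xi_2)(LR(\epsilon))\bigr),
\]
and it therefore suffices to establish $(\ell_2\times\xi_2)(LR(\epsilon))=\infty$. In this regime $\auk$ and $\puk$ in~\eqref{eqn:xikrdef} are empty, so the indicator there is identically $1$ and $\|a\|^{d+1}=(1+a_1^2+a_2^2)^2$. Moreover, writing $(a_1,a_2)=s(\cos\theta,\sin\theta)$, the ellipse $E_2(L(a,p),r)$ has centre $p=(p_1,p_2)$, semi-major axis of length $A=r\sqrt{1+s^2}$ along $(\cos\theta,\sin\theta)$, and semi-minor axis of length $r$ along $(-\sin\theta,\cos\theta)$.

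The plan is to exhibit a subfamily $F\subset LR(\epsilon)$ which on its own has $(\ell_2\times\xi_2)$-measure infinite. I take $F$ to consist of the ellipses whose parameters satisfy
\[
p\in K(\epsilon/4),\qquad |\theta|\le \tfrac{1}{100},\qquad 0<r\le \tfrac{\epsilon}{8},\qquad A\ge 4\epsilon.
\]
To verify $F\subset LR(\epsilon)$, for either sign I substitute $\tau_1=(\pm 3\epsilon/2-p_1)/(A\cos\theta)$, $\tau_2=0$ into the ellipse parametrization $p+A\tau_1(\cos\theta,\sin\theta)+r\tau_2(-\sin\theta,\cos\theta)$: the horizontal coordinate of the resulting point is exactly $\pm 3\epsilon/2$ by construction, its vertical coordinate equals $p_2+(\pm 3\epsilon/2-p_1)\tan\theta$ (a quantity independent of $A$, bounded in absolute value by $\epsilon/8+(13\epsilon/8)\tan(1/100)<\epsilon/2$), and $|\tau_1|\le (13\epsilon/8)/(4\epsilon\cos(1/100))<1$, so the point genuinely lies in the ellipse.

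It remains to show $(\ell_2\times\xi_2)(F)=\infty$. Using polar coordinates in $(a_1,a_2)$ with Jacobian $s$, and $\xi_2=\int_0^1\xi_{2,r}\,r^{-3}\,\id r$, the measure of $F$ is bounded below by a positive constant depending on $\epsilon$ times
\[
\int_0^{\epsilon/8}\int_{s_0(r)}^{\infty}\frac{s\,\id s}{(1+s^2)^2}\,r^{-3}\,\id r,\qquad s_0(r)=\sqrt{16\epsilon^2/r^2-1}.
\]
The inner integral equals $\tfrac{1}{2(1+s_0(r)^2)}=r^2/(32\epsilon^2)$, and the outer integral then reduces to $\int_0^{\epsilon/8}r^{-1}\,\id r=\infty$. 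The only non-routine point is the geometric verification that $F\subset LR(\epsilon)$; the divergence of the intensity integral itself is a quantitative version of~\eqref{eqn:infinitediamest} in the borderline regime $k=(d+1)/2$.
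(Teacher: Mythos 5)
Your proof is correct and follows essentially the same route as the paper's: apply Theorem~\ref{thm:cylellips} to reduce to showing that $(\ell_2\times\xi_2)(LR(\epsilon))=\infty$, exhibit an explicit sub-family of ellipses contained in $LR(\epsilon)$, and compute the divergent intensity integral. The paper's sub-family is $LR_1=\{\cent(E)\in K(\epsilon/2),\,\diam(E)\ge 10\epsilon,\,|\arg(E)|<1/10\}$ with the geometric inclusion dismissed as ``easy to check,'' and the resulting radial integral handled via a $\max(5\epsilon/r,1)$ term; you cut to $p\in K(\epsilon/4)$, $|\theta|\le 1/100$, $A\ge 4\epsilon$ and additionally impose $r\le\epsilon/8$, which you spend on making the inclusion $F\subset LR(\epsilon)$ fully explicit via the ellipse parametrization $p+A\tau_1 u_1+r\tau_2 u_2$ and on avoiding the $\max$ in the integral (since then $16\epsilon^2/r^2>1$ automatically). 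Both give the same $\int_0 r^{-1}\,\id r$ divergence; the closing remark connecting the divergence to~\eqref{eqn:infinitediamest} in the borderline case $k=(d+1)/2$ is a nice observation but is not used in the paper's argument.
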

\begin{proof}
The proof essentially follows that of the lower bound of Proposition 5.1 in \cite{TU_2017}.
For an ellipse $E\in {\mathfrak E}^2$, let $ \arg(E) \in [-\pi/2,  \pi/2)$ 
denote the angle between the $e_1$ axis and the line containing the major 
axis of $E$ . It is easy to check that if $E$ satisfies
\begin{align*}
& \cent(E) \in K(\epsilon/2) \\
& \diam(E) \geq 10 \epsilon,\ |\arg (E)| < 1/10,
\end{align*}
then $E\in LR$. Therefore, if we let
\begin{equation*}
LR_1=LR_1(\epsilon,\omega_{e})
=\left\{ E\in {\mathfrak E}^2 : \cent(E) \in K(\epsilon/2), 
\diam(E) \geq 10 \epsilon,\ |\arg (E)| < 1/10 \right\},
\end{equation*}
then we have $LR_1\subset LR$ so that
\begin{equation*}
LR_1(\omega_{e}) \subset LR(\omega_{e}).
\end{equation*}

Using Theorem \ref{thm:cylellips} we have that the intensity measure 
of $\omega_{e}$ is given by $\lambda \ell_2\times \xi_2$ where 
\begin{equation*}
\xi_2( \cdot)= \int_{0}^1 \xi_{2,r}( \cdot) r^{-3} \id r.
\end{equation*}
Hence 
\begin{align*}
\Pm \left(  LR(\omega_{e}) \right) \geq \Pm \left( LR_1(\omega_{e}) \right)
=1-\e^{ -\lambda \ell_2\times \xi_2(LR_1) }.
\end{align*}
It remains to show that $ \ell_2\times \xi_2(LR_1)=\infty$. We have that
\begin{eqnarray}\label{eqn:easycrossing}
\nonumber
\lefteqn{ \ell_2\times \xi_2(LR_1) = \ell_2(K(\epsilon/2)) \int_{0}^1 \int_{ \substack{ \diam(E_2(L,r)) \geq 10 \epsilon \\ |\arg (E_2(L,r))|<1/10 }} \frac{1}{(1 +a_1^2 +a_2^2)^2} r^{-3} \id  a_1 \id a_2 \id r }\\
&& = \frac{\epsilon^2}{4}  \int_{0}^1\int_{ \substack{ \diam(E_2(L,r)) \geq 10 \epsilon \\ |\arg (E_2(L,r))|<1/10 }} \frac{1}{(1 +a_1^2 +a_2^2)^2} r^{-3} \id  a_1 \id a_2 \id r .
\end{eqnarray}
We now change coordinates from $(a_1,a_2)$ to polar coordinates $( \rho ,\theta) \in \R_+ \times [- \pi , \pi).$ By the facts that $\theta = \arg(E(r)) (= \arctan(a_2/a_1))$ and $\diam(E(r)) = 2 r \sqrt{1+a_1^2 +a_2^2}$, Equation \eqref{eqn:easycrossing} equals 
\begin{eqnarray}\label{eqn:easycrosscomp}
\nonumber
\lefteqn{\frac{\epsilon^2}{4} \int_{0}^1 \int_{ \substack{ 2 r \sqrt{1+\rho^2 } \geq 10 \epsilon \\|\theta|<1/10 }} \frac{\rho}{(1 +\rho^2)^2} r^{-3} \id  \rho \id \theta \id r= \frac{\epsilon^2}{20}
\int_{0}^1 \int_{ r \sqrt{1+\rho^2} \geq 5 \epsilon} \frac{\rho}{(1 +\rho^2 )^2} r^{-3} \id \rho \id r }\\ \nonumber
&&=\frac{\epsilon^2}{20} \int_{0}^1\left[\frac{-1}{2(1+\rho^2)}\right]_{\sqrt{\max(\frac{25\epsilon^2}{r^2}-1,0)}}^{\infty} r^{-3} \id r=\frac{\epsilon^2}{40} \int_{0}^1 \frac{1}{ \max( 5 \epsilon/r,1)^2} r^{-3} \id r \\ && \ge \frac{\epsilon^2}{40} \int_{0}^{5 \epsilon} \frac{1}{ 25\epsilon^2} r^{-1} \id r=\infty,
\end{eqnarray}
as required.

\end{proof}

\begin{proof}[Proof of Theorem~\ref{thm:disconn23}.]
We start with part $a)$. Recall that in this case, we work with 
$d=2$. The aim is to verify the assumption~\eqref{eqn:assumption1} of 
Lemma~\ref{lem:noarmdisconn}. For $\epsilon>0$, define 
$$
\mbox{Cross}(\epsilon)=\Line_{\{-3\epsilon/2\}\times [-\epsilon/2,\epsilon/2],\{3\epsilon/2\}\times [-\epsilon/2,\epsilon/2]}\subset A(2,1).
$$
We observe that for any $\lambda>0$ and  $\epsilon >0$ we have that 
\begin{eqnarray*}
(\lambda \nu_2 \times \rho_s) ((L,r)\,:\,L\in \mbox{Cross}(\epsilon)) 
=\int_{0}^1 \nu_2\left(\mbox{Cross}(\epsilon)\right)r^{-2}\id r=\infty,
\end{eqnarray*}
since $\nu_2(\mbox{Cross}(\epsilon))>0$ (this claim is easy to check and 
left to the reader). This implies that 
\begin{equation}\label{eqn:linecross}
\BP\left(\exists (L,r)\in \omega: L\in \mbox{Cross}(\epsilon)\right)=1.
\end{equation}
Since
\begin{eqnarray*}
\left\{ \exists (L,r)\in \omega: L\in \mbox{Cross}(\epsilon)\right\}
\subset \left\{\{-3\epsilon/2\}\times [-\epsilon/2,\epsilon/2] 
\overset{ \CV^c \cap K(3\epsilon,\epsilon)}{\longleftrightarrow}\{3\epsilon/2\}\times [-\epsilon/2,\epsilon/2] \right\},
\end{eqnarray*}
 we get the result from~\eqref{eqn:linecross} and Lemma~\ref{lem:noarmdisconn}.

We now move on to prove part $b),$ for which most of the work has already 
been done. Let $d=3$ and $\lambda>0$. We have that 
\[
\{LR(\epsilon,\omega_e)\}
\subset \left\{\{-3\epsilon/2\}\times [-\epsilon/2,\epsilon/2] 
\overset { (\CV\cap H_2)^c \cap K(3\epsilon,\epsilon)}{\longleftrightarrow}\{3\epsilon/2\}\times [-\epsilon/2,\epsilon/2] \right\}
\]
and so the statement follows from Lemmas~\ref{lem:ellipsecrossing} 
and~\ref{lem:noarmdisconn}.
\end{proof}

We can now prove Theorem \ref{thm:connectivitytranstition}.

\begin{proof}[Proof of Theorem~\ref{thm:connectivitytranstition}]
The statement for $d\geq 4$ is Theorem \ref{thm:conndgeq4}
while the statements for $d=2,3$ are covered by Theorem \ref{thm:disconn23}.
\end{proof}

\begin{appendices}

\section{A 0-1 law} \label{app:ergo}

The purpose of this appendix is to prove Lemma \ref{lem:zeroone}.

\begin{proof}[Proof of Lemma \ref{lem:zeroone}]
We will start by considering events depending only on some finite region of 
$A(d,1) \times (0,1].$ To that end, for $R\ge 1$, let 
\[
\Gamma_{x,R} = \{(L,r) \in A(d,1)\times (0,1] : 
L \in \CL_{B(x,R)} \text{ and } r\in (R^{-1},1] \},
\]
and
\[
\left.\omega\right|_{\Gamma_{x,R}} 
= \{(L,r) \in \omega : (L,r) \in \Gamma_{x,R}\}
\]
so that $\left.\omega\right|_{\Gamma_{x,R}}$ is the restriction of 
$\omega \in \Omega$
to the finite region $\Gamma_{x,R}.$ When $x=o$ we will 
simply write $\Gamma_R.$
The event $F$ is determined by $\Gamma_{x,R}$ if and only if for every $\omega \in F$ and any 
$\tilde{\omega}$ such that $\left.\omega\right|_{\Gamma_{x,R}} = \left.
\tilde{\omega}\right|_{\Gamma_{x,R}}$ we have that 
$\tilde{\omega} \in F.$ Note that if $F$ is determined by $\Gamma_{x,R}$ then 
it is also determined by $\Gamma_{x,R'}$ for any $R' > R.$

We will now prove that, for any events $F_i$ determined by $\Gamma_{x_i,R}$ for $i= 1,2$ where 
$\dist{x_1}{x_2}\geq 4 R$
we have that
\begin{equation} \label{eqn:E1E2bound}
| \BP(F_1 \cap F_2) - \BP(F_1) \BP(F_2) | 
\leq 4\left(1 - \exp\left(-\frac{\lambda c_2R^{2(d-1)}
\left(R^{d-1} - 1\right)}{(d-1)\dist{x_1}{x_2}^{d-1}} \right)\right).
\end{equation}
For simplicity, write 
$\Gamma_{x_1,x_2,R} = \CL_{B(x_1,R), B(x_2,R)}\times (R^{-1},1]$ 
and observe that
\begin{eqnarray}
 \lefteqn{\BP(F_1 \cap F_2) } \label{eq:almostindependency1}\\
 & & =\BP(F_1|\omega(\Gamma_{x_1,x_2,R}) =0)\BP(F_2|\omega(\Gamma_{x_1,x_2,R}) =0)\BP(\omega(\Gamma_{x_1,x_2,R}) =0) \nonumber\\
 & & \ \ \ + \BP(F_1 \cap F_2|\omega(\Gamma_{x_1,x_2,R})\neq 0) 
 \BP(\omega(\Gamma_{x_1,x_2,R}) \neq 0),\nonumber
\end{eqnarray}
since the events $F_1$ and $F_2$ are conditionally independent on the event that
$ \omega(\Gamma_{x_1,x_2,R})=0$.
Furthermore, writing
\begin{eqnarray*}
\lefteqn{ \BP(F_i) 
=\BP(F_i|\omega(\Gamma_{x_1,x_2,R}) =0)\BP(\omega(\Gamma_{x_1,x_2,R}) =0)} \\
 && \ \ \ + \BP(F_i|\omega(\Gamma_{x_1,x_2,R}) \neq 0)\BP(\omega(\Gamma_{x_1,x_2,R}) \neq 0)
\end{eqnarray*}
 for $i= 1,2$ and using \eqref{eq:almostindependency1}, a straightforward calculation gives us that 
\[
|\BP(F_1 \cap F_2) - \BP(F_1) \BP(F_2)| \leq 4 \BP(\omega(\Gamma_{x_1,x_2,R}) \neq 0).
\]
We will now bound 
$\BP\left(\omega\left(\Gamma_{x_1,x_2,R}\right) \neq 0 \right)$. 
First we use Lemma \ref{lem:basicmeasure} part $c)$ to see that
\begin{eqnarray*}
\lefteqn{\BP\left(\omega\left(\Gamma_{x_1,x_2,R}\right)\neq 0\right)} \\
&&= 1 - \exp\left(-\lambda \int_{R^{-1}}^1 \nu_d\left(\CL_{B(x_1, R),B(x_2, R)}\right)r^{-d}\id r\right) \\
&& = 1 - \exp\left(-\lambda \int_{R^{-1}}^1 R^{d-1 }\nu_d\left(\CL_{B(x'_1, 1),B(x'_2, 1)}\right)r^{-d}\id r\right), 
\end{eqnarray*}
where $x'_1$ and $x'_2$ is as in Lemma \ref{lem:basicmeasure}. Since $\dist{x'_1}{x_2'}=R^{-1}\dist{x_1}{x_2}\ge 4$ we use Lemma \ref{lem:measurebounds_org} to see that 
\begin{eqnarray*}
\lefteqn{\BP\left(\omega\left(\Gamma_{x_1,x_2,R}\right)\neq 0\right)} \\
&& \leq 1 - \exp\left(-\lambda \int_{R^{-1}}^1 
c_2 \left(\frac{R}{\dist{x'_1}{x'_2}}\right)^{d-1} r^{-d}\id r \right) \\
 && = 1 - \exp\left(-\lambda c_2 \left(\frac{R}{\dist{x'_1}{x_2'}}\right)^{d-1}
 \left(\frac{1}{1-d} - \frac{R^{d-1}}{1-d}\right)\right) \\
&& = 1 - \exp\left(-\frac{\lambda c_2 R^{d-1}
\left(R^{d-1} - 1\right)}{(d-1)\dist{x'_1}{x'_2}^{d-1}} \right) \\
&& = 1 - \exp\left(-\frac{\lambda c_2 R^{2(d-1)}
\left(R^{d-1} - 1\right)}{(d-1)\dist{x_1}{x_2}^{d-1}} \right)
\end{eqnarray*}
which proves \eqref{eqn:E1E2bound}.

Consider now some arbitrary shift-invariant event $F.$
Define 
\[
\indicator_{F,x,R}:=\indicator\left(\omega \in \left\{
\BP(F| \left.\omega\right|_{\Gamma_{x,R}})>1/2\right\}\right),
\] 
and note that informally, we have that $\indicator_{F,x,R}(\omega)=1$ if $\omega$ is such that knowledge of $\omega |_{\Gamma_{x,R}}$, 
i.e. the configuration inside $\Gamma_{x,R}$, makes the occurrence of $F$ probable. The value $1/2$ is somewhat arbitrary.  

Note also that the event that $\indicator_{F,x,R}=1$ is determined by
$\Gamma_{x,R}.$   Then, by L\'evy's 0-1 law,
\[
\lim_{R \to \infty} \indicator_{F,o,R}=\indicator_F \textrm{ a.s. }
\]
Using that $F$ is invariant under isometries, it is straightforward to 
prove that the laws of $(\indicator_F,\indicator_{F,o,R})$
and $(\indicator_F,\indicator_{F,x,R})$ are the same for every $x$, and so 
$\indicator_{F,R^4 e_1,R}$ converges in probability to $\indicator_F.$
Thus, $\lim_{R \to \infty} \BP(I_{F,o,R}=I_{F,R^4 e_1,R}=I_F)=1 $ and so 
\begin{equation} \label{eqn:Fprob}
\lim_{R \to \infty} \BP(I_{F,o,R}=I_{F,R^4 e_1,R}=1)=\BP(F).
\end{equation}

By using \eqref{eqn:E1E2bound} we then see that 
\begin{eqnarray*}
\lefteqn{
\lim_{R \to \infty}\left|\BP(I_{F,o,R}=I_{F,R^4 e_1,R}=1)-\BP(I_{F,o,R}=1)\BP(I_{F,R^4 e_1,R}=1)\right|}\\
& & \leq \lim_{R \to \infty} 4\left(1 - \exp\left(-c\frac{R^{2(d-1)}
\left(R^{d-1} - 1\right)}{R^{4(d-1)}} \right)\right)=0.
\end{eqnarray*}
We therefore conclude that 
\[
\lim_{R \to \infty}\BP(I_{F,o,R}=I_{F,R^4 e_1,R}=1)
=\lim_{R \to \infty}\BP(I_{F,o,R}=1)\BP(I_{F,R^4 e_1,R}=1)=\BP(F)^2,
\]
and by comparing this to \eqref{eqn:Fprob} we conclude that 
$\BP(F)\in\{0,1\}$.
\end{proof}
\end{appendices}
\bibliography{references} 
\end{document}